\newtheorem{theorem}{Theorem}[section]
\newtheorem{lemma}[theorem]{Lemma}
\theoremstyle{corollary}
\newtheorem{corollary}[theorem]{Corollary}
\theoremstyle{proposition}
\newtheorem{proposition}[theorem]{Proposition}
\theoremstyle{definition}
\newtheorem{definition}[theorem]{Definition}
\theoremstyle{remark}
\numberwithin{equation}{section}
\newcommand{\comment}[1]{}
\begin{document}

\title [MAXIMAL OPERATORS, SINGULAR INTEGRALS]{MAXIMAL OPERATORS AND SINGULAR INTEGRALS ON THE WEIGHTED LORENTZ AND MORREY SPACES}

\author{Nguyen Minh Chuong}

\address{Institute of mathematics, Vietnamese  Academy of Science and Technology,  Hanoi, Vietnam.}
\email{nmchuong@math.ac.vn}

\author{Dao Van Duong}
\address{School of Mathematics,  Mientrung University of Civil Engineering, Phu Yen, Vietnam.}
\email{daovanduong@muce.edu.vn}

\author{Kieu Huu Dung}
\address{School of Mathematics, University of Transport and Communications, Ha Noi, Vietnam.}
\email{khdung@utc2.edu.vn}
\thanks{This paper is supported by Vietnam National Foundation for Science and Technology Development (NAFOSTED)}

\keywords{Maximal function, sublinear operator, strongly singular integral, commutator,  $A_p$ weight, $A(p,1)$ weight, $A_p(\varphi)$ weight, BMO space, Lorentz spaces, Morrey spaces.}
\subjclass[2010]{Primary 42B20, 42B25; Secondary 42B99}
\begin{abstract}
In this paper, we first give some new characterizations of Muckenhoupt type weights through establishing the  boundedness of maximal operators on the weighted Lorentz and Morrey spaces. Secondly, we establish the boundedness of sublinear operators including many interesting in harmonic analysis and its commutators on the weighted Morrey spaces. Finally, as an application, the boundedness of strongly singular integral operators and commutators with symbols in BMO space are also given.
\end{abstract}

\maketitle

\section{Introduction}\label{section1}
Let $f$ be a locally integrable function on $\mathbb R^n$. The Hardy-Littlewood maximal operator of $f$ is defined	by
\begin{align}
M(f)(x)=\sup\limits_{Q}\frac{1}{|Q|}\int_Q|f(y)|dy,\;\;x\in \mathbb R^n,
\end{align}
where the supremum is taken over all cubes containing $x$. It is well known that the Hardy-Littlewood maximal operator is one of the most important operators and plays a key role in harmonic analysis since maximal operators could control crucial quantitative information concerning the given functions. It is very a powerful tool for solving crucial problems in analysis, for example, applications to differentiation theory, in the theory of singular integral operators and partial differential equations (see \cite{FS1971}, \cite{St1993}, \cite{T1986} for more details). 
\vskip 5pt
It is very important to study weighted estimates for maximal operators in harmonic analysis. B. Muckenhoupt \cite{Mu1972} first discovered the weighted norm inequality for the Hardy-Littlewood maximal operators in the real setting. More precisely, it is proved that
for $1<p<\infty$, 
\begin{align}\label{maximal function}
\int\limits_{\mathbb R^n}\left|M(f)(x)\right|^p\omega(x)dx
\leq C \int\limits_{\mathbb R^n}\left|f(x)\right|^p\omega(x)dx,
\end{align}
holds for all $f$ in the weighted Lebesgue space $L^p(\omega(x)dx)$ if and only if $\omega$ belongs to the class of Muckenhoupt weights denoted by $A_p$. %satisfying 
%\begin{align}\label{ Ap weight}
%\sup_{\text{cube\;}Q}\left(\frac{1}{|Q|}\int_{Q}\omega (x)dx\right)\left(\frac{1}{|Q|}\int_{Q}\omega (x)^{-\frac{1}{p-1}}dx\right)^{p-1}\leq C<\infty.
%\end{align}
\vskip 5pt
Later, Coifman and Fefferman \cite{CF1974} extended the theory of Muckenhoupt weights  to general Calder\'{o}n-Zygmund operators. They also proved that $A_p$ weights satisfy the crucial reverse H\"{o}lder condition. The weighted norm inequalities for the maximal operators are also extended to the vector valued setting by Andersen and John in the work \cite{AJ1981}, and to the Lorentz spaces by  Chung,  Hunt, and Kurtz in \cite{CHK1982}. It is well known that the theory of weighted functions plays an important role in the study of boundary value problems on Lipschitz domains, in theory of extrapolation of operators and applications to certain classes of nonlinear partial differential equation. 
\vskip 5pt
It is also useful to remark that in 2012, Tang \cite{Ta2012} established the weighted norm inequalities for maximal operators and pseudodifferential operators with smooth symbols associated to the class of new weighted functions $A_p(\varphi)$ (see in Section \ref{section2} below for more details) including the Muckenhoupt weighted functions.
% The class of $\omega\in A_p(\varphi)$weighted functions is defined as follows. 
%Let $1 < p < \infty$ and $\varphi(t)=(1+t)^{\alpha_0}$  for $\alpha_0>0$ and $t\ge 0$. We say that a weight $\omega\in A_p(\varphi)$ if there exists a constant $C$ such that for all cubes $Q$,
%\begin{align*}
%\Big(\frac{1}{\varphi(|Q|)|Q|}\int_Q \omega(x)dx \Big).\Big(\frac{1}{\varphi(|Q|)|Q|}\int_Q\omega(x)^{-\frac{1}{p-1}}dx \Big)^{p-1}\le C.
%\end{align*}
%A weight $\omega\in A_1(\varphi)$ if there is a constant $C$ such that
%$$M_\varphi(f)(x)\le C\omega(x), {\rm\; for\; a. e} ~x\in \mathbb{R}^n
%,$$
%where
%\begin{align*}
%M_\varphi(f)(x)=\sup\limits_{x\in {\rm cube }~Q}\frac{1}{\varphi(|Q|)|Q|}\int_Q |f(y)|dy.
%\end{align*} 
It should be pointed out that the class of $ A_p(\varphi)$ weights do not satisfy the doubling condition. 
%For instance, $\omega(x)=(1+|x|)^{(-n+\eta)}$ for $0\leq\eta\leq n\alpha_0$ is in $A_1(\varphi)$, but not in $A_p$ weights and $\omega(x)dx$ is not a doubling measure.
\vskip 5pt
It is well known that Morrey \cite{Mo1938} introduced the classical Morrey spaces to study the local behavior of solutions to second order elliptic partial differential equations. Moreover, it is found that many properties of solutions to partial differential equations can be attributed to the boundedness of some operators on Morrey spaces. Also, the Morrey spaces have many important applications to Navier-Stokes  and Schr\"{o}dinger equations, elliptic equations with discontinuous coefficients and potential theory (see, for example, \cite{Adams1975}, \cite{Caffarelli1988},  \cite {Fan1998}, \cite{Mazzucato03}, \cite{Ruiz1991}, \cite{T1992} and therein references).
During last decades, the theory of Morrey spaces has been significantly developed
into different contexts, including the study of classical operators of harmonic analysis, for instance, maximal functions, potential operators, singular  integrals, pseudodifferential operators, Hausdorff operators and their commutators in generalizations of these spaces (see \cite{AGL2000}, \cite{Ch2018}, \cite{Guliyev2011}, \cite{G2016}, \cite{KS2009}). Especially,  Wang, Zhou and Chen \cite{WZC2017} recently have established the interesting connection between the $A_p$ weights and Morrey spaces. More precisely, some new characterizations of Muckenhoupt weights are given by replacing the Lebesgue spaces by the Morrey spaces. Motivated by all of the above mentioned facts, the first main of this paper is to give some new characterizations of Muckenhoupt type weights such as $A_p$, $A(p,1)$, and $A_p(\varphi)$ by establishing the boundedness of maximal operators on the weighted Morrey and Lorentz spaces. In particular, we give the weighted norm inequality of weak type for new dyadic maximal operators associated to the $A_p^{\Delta,\eta}(\varphi)$ dyadic weights. The results are given in Section \ref{section3} of the paper.
\vskip 5pt
The second main of this paper is to study the boundedness of sublinear operators  including many interesting operators in harmonic analysis, such as the Calder\'{o}n-Zygmund operator, Hardy-Littlewood maximal operator, strongly singular integrals, and  so on, on the weighted Morrey spaces.
\vskip 5pt
Let us first give the definition of sublinear operators with strongly singular kernels. Let the operator $\mathcal{T}$ be well defined on the space of all infinitely differential functions with compact support $C^\infty_c(\mathbb R^n)$. It is said that $\mathcal{T}$ is a strongly singular sublinear operator if it is a linear or sublinear operator and satisfies the size condition as follows
\begin{align}\label{ineq-sub}
\left|\mathcal{T}f(x)\right|\leq C\int_{\mathbb R^n}\frac{|f(y)|}{|x-y|^{n+\lambda}}dy, \text{\; for\; a.e \;} x\not\in \text{supp}{f},
\end{align}
for all $f\in C^\infty_c(\mathbb R^n)$, where $\lambda$ is a non-negative real number. 
\vskip 5pt
For a measurable function $b$,  the commutator operator $[b, \mathcal{T}]$
is defined as a linear or a sublinear operator such that
\begin{align}\label{ineq-sub-com}
\left|[b, \mathcal{T}]f(x)\right|\leq C\int_{\mathbb R^n}\frac{|f(y)||b(x)-b(y)|}{|x-y|^{n+\lambda}}dy, \text{\; for\; a.e \;} x\not\in \text{supp}{f},
\end{align}
for every $f\in C^\infty_c(\mathbb R^n)$. For $\lambda\leq 0$, the sublinear operators $\mathcal{T}$ and $[b, \mathcal{T}]$ have been investigated by many authors. For example, see in the works \cite{Guliyev2011}, \cite{Kokilashvili2016}, \cite{Soria1994} and therein references. In the Section \ref{section4} of the paper, we establish the boundedness of sublinear operators $\mathcal{T}$ and $[b, \mathcal{T}]$ for $\lambda\geq 0$ on the weighted Morrey type spaces. As an application, we obtain some new results about boundedness of strongly singular integral operators and their commutators with symbols in BMO space on the weighted Morrey spaces. Moreover, maximal singular integral  operators of Andersen and John type are studied on the two weighted  Morrey spaces with vector valued functions in Section \ref{section4}.
%%%%%%%%%%%%%%%%%%%%%%%%%%%%%%%%%%
%%%%%%%%%%%%%%%%%%%%%%%%%%%%%%%%%%
%%%%%%%%%%%%%%%%%%%%%%%%%%%%%%%%%%
\section{Some notations and definitions}\label{section2}
Throught the whole paper, we denote by $C$ a positive geometric constant that is independent of the main parameters, but can change from line to line.
We also write $a\lesssim b$ to mean that there is a positive constant $C$, independent of the main parameters, such that $a \le Cb$. The symbol $f\simeq g$ means that f is equivalent to g (i.e. $C^{-1} f\leq g \leq Cf)$. As usual, $\omega(\cdot)$ is a non-negative weighted function on $\mathbb{R}^n$. Denote $\omega(B)^{\alpha }=\big(\int_B\omega(x)dx\big)^{\alpha}$, for $\alpha\in\mathbb R$. Remark that if $\omega(x) = x^{\beta}$ for $\beta > -n$, then we have
\begin{align}\label{ineq-power}
\omega(B_r(0))=\int_{B_r(0)}|x|^{\beta}dx\simeq r^{\beta+n}.
\end{align}
We also denote by $B_r(x_0)=\{x\in\mathbb R^n:|x-x_0|<r\}$ a ball of radius $r$ with center at $x_0$, and let $rB$ define the ball with the same center as $B$ whose radius is $r$ times radius of $B$.

Now, we are in a position to give some notations and definitions of weighted Morrey spaces.

%\begin{definition}
%Let $1 \le q < \infty, -\frac{1}{q} < λ < 0$. The weighted $λ$-central Morrey  spaces $\dot{B}^{q,\lambda}_\omega(\mathbb{R}^n)$ consists of all measurable functions $f \in L^q_{\omega,{\rm loc}}(\mathbb{R}^n)$ satisfying
%\begin{align}
%\|f\|_{\dot{B}^{q,\lambda}_\omega(\mathbb{R}^n)}=\sup\limits_{R>0}\left(\frac{1}{\omega(B_R)^{1+\lambda q}}\int_{B_R}|f(x)|^q\omega(x)dx \right)^{\frac{1}{q}}<\infty,
%\end{align}
%where
%\begin{align*}
%\omega(B_R)=\int_{B_R}\omega(x)dx.
%\end{align*}
%\end{definition}
%%%%%%%%%%%%%%%%%%%%%%%%%%%%%%%%%%
\begin{definition}
Let $1 \le q < \infty, 0 < \kappa < 1$ and  $\omega_1$  and $\omega_2$ be two weighted functions. Then two weighted Morrey space is defined	by
\begin{align*}
\mathcal{B}^{q,\kappa}_{\omega_1,\omega_2}(\mathbb{R}^n)=\{f\in L^q_{\omega_2,{\rm loc}}(\mathbb{R}^n):\|f\|_{\mathcal{B}^{q,\kappa}_{\omega_1,\omega_2}(\mathbb{R}^n)}<\infty \},
\end{align*}
where
\begin{align*}
\|f\|_{\mathcal{B}^{q,\kappa}_{\omega_1,\omega_2}(\mathbb{R}^n)}=\sup\limits_{\rm ball\,B} \Big(\frac{1}{\omega_1(B)^{\kappa}}\int_{B}|f(x)|^q\omega_2(x)dx \Big)^{\frac{1}{q}}.
\end{align*}
\end{definition}
 It is easy to see that $\mathcal{B}^{q,\kappa}_{\omega_1,\omega_2}(\mathbb{R}^n)$ is a Banach space. Note that if $\omega_1=\omega, \omega_2=1$,  we then write $\mathcal B^{q,\kappa}(\omega,\mathbb R^n):=\mathcal B^{q,\kappa}_{\omega_1,\omega_2}(\mathbb R^n)$. Also, if $\omega_1=\omega_2=\omega$, then we denote $\mathcal B^{q,\kappa}
_\omega(\mathbb R^n):= {\mathcal B}^{q,\kappa}_{\omega_1,\omega_2}(\mathbb R^n)$. In particular, for $\omega=1$ we write $\mathcal B^{q,\kappa}
(\mathbb R^n):=\mathcal B^{q,\kappa}_\omega(\mathbb R^n)$.
\begin{definition}
%%%%%%%%%%%%%%%%%%%%%%%%%%%%%%%%
Let $1 \le q < \infty, 0 < \kappa < 1$. The local Morrey space is defined by
\begin{align*}
\mathcal{B}^{q,\kappa}_{\rm loc}(\mathbb{R}^n)=\{f\in L^q_{{\rm loc}}(\mathbb{R}^n):\|f\|_{\mathcal{B}^{q,\kappa}_{\rm loc}(\mathbb{R}^n)}<\infty \},
\end{align*}
where
\begin{align*}
\|f\|_{\mathcal{B}^{q,\kappa}_{\rm loc}(\mathbb{R}^n)}=\sup\limits_{x\in\mathbb{R}^n,0<R<1} \Big(\frac{1}{|B_R(x)|^{\kappa}}\int_{B_R(x)}|f(y)|^q dy \Big)^{\frac{1}{q}}.
\end{align*}
\end{definition}
%For simplicity, write $\mathcal{B}^{q,\kappa}_{\rm loc}(\mathbb{R}^n):=\mathcal{B}^{q,\kappa}_{\rm loc}(\omega, \mathbb{R}^n)$ when $\omega$ is constant.
Note that for $1\leq q\leq p<\infty$, the local Morrey space  $\mathcal{B}^{q,1-\frac{q}{p}}_{\rm loc}(\mathbb{R}^n)$ has some important applications to the Navier-Stokes equations and other evolution equations (see in \cite{ Fe1993, T1992} for more details).
%%%%%%%%%%%%%%%%%%%%%%%%%%%%%%%%%%%%%%%
\begin{definition}
Let $1 \le q < \infty$ and $0 < \kappa < 1$. The weighted  inhomogeneous Morrey space is defined by
\begin{align*}
{B}^{q,\kappa}_\omega(\mathbb{R}^n)=\{f\in L^q_{{\omega, \rm loc}}(\mathbb{R}^n):\|f\|_{{B}^{q,\kappa}_\omega(\mathbb{R}^n)}<\infty \},
\end{align*}
where
\begin{align*}
\|f\|_{{B}^{q,\kappa}_{\omega}(\mathbb{R}^n)}=\sup\limits_{x\in\mathbb{R}^n,R\geq 1} \Big(\frac{1}{\omega(B_R(x))^{\kappa}}\int_{B_R(x)}|f(x)|^q\omega(x) dx \Big)^{\frac{1}{q}}.
\end{align*}
\end{definition}
If $\omega=1$ and $1\leq q\leq p<\infty$, then the inhomogeneous Morrey space  ${B}^{q,1-\frac{q}{p}}_\omega(\mathbb{R}^n)$ is introduced by  Alvarez,  Guzm\'{a}n-Partida and  Lakey (see in \cite{AGL2000} for more details). Note that $\mathcal{B}^{q,\kappa}_{\rm loc}(\mathbb{R}^n)$ and ${B}^{q,\kappa}_\omega(\mathbb{R}^n)$ are two Banach spaces.
%%%%%%%%%%%%%%%%%%%%%%%%%%%%%%%%%
\begin{definition} 
Let $1 \le q < \infty, 0 < \kappa < 1$ and $\omega$ be a weighted function. The  weighted Morrey space is defined by
\begin{align*}
\mathcal{L}^{q,\kappa}_{\omega}(\mathbb{R}^n)=\{f\in L^q_{\omega,{\rm loc}}(\mathbb{R}^n):\|f\|_{\mathcal{L}^{q,\kappa}_{\omega}(\mathbb{R}^n)}<\infty \},
\end{align*}
where
\begin{align*}
\|f\|_{\mathcal{L}^{q,\kappa}_{\omega}(\mathbb{R}^n)}=\sup\limits_{\rm cube ~ Q} \Big(\frac{1}{\omega(Q)^k}\int_{Q}|f(x)|^q\omega(x) dx \Big)^{\frac{1}{q}}.
\end{align*}
\end{definition}
 
From this, for convenience, we denote $M^p_{q,\omega}(\mathbb{R}^n):=\mathcal{L}^{q,1-\frac{q}{p}}_{\omega}(\mathbb{R}^n)$ for the case $0 < q < p < \infty$.

\begin{definition}
Let $0 < q \le p < \infty$ and $\omega$ be a weighted function. Then the weighted weak Morrey space is defined	by
\begin{align*}
WM^{p}_{q,\omega}(\mathbb{R}^n)=\{f\in L^q_{\omega,{\rm loc}}(\mathbb{R}^n):\|f\|_{WM^{p}_{q,\omega}(\mathbb{R}^n)}<\infty \},
\end{align*}
where
\begin{align*}
\|f\|_{WM^{p}_{q,\omega}(\mathbb{R}^n)}=\sup\limits_{\rm cube ~ Q}\frac{1}{\omega(Q)^{\frac{1}{q}-\frac{1}{p}}}\sup\limits_{\lambda>0}\lambda \Big(\int_{\{x\in Q:|f(x)|>\lambda \}}\omega(x) dx \Big)^{\frac{1}{q}}.
\end{align*} 
\end{definition}

For a measurable function $f$ on $\mathbb{R}^n$, the distribution function of $f$ associated with the measure $\omega(x)dx$ is defined as follows
\begin{align*}
d_f(\alpha)=\omega\left(\{x\in \mathbb{R}^n: |f(x)|>\alpha \}\right).
\end{align*}

The decreasing rearrangement of $f$ with respect to the measure $\omega(x)dx$ is the function $f^*$ defined	on $[0, \infty)$ as follows
\begin{align*}
f^*(t)=\inf \{s>0:d_f(s)\le t \}.
\end{align*}

\begin{definition}
(Section 2 in \cite{CHK1982}). Let $0 < p, q \le \infty$. The weighted Lorentz space $L^{p,q}_\omega (\mathbb{R}^n)$ is defined	as the set of all measurable functions $f$ such that
$\|f\|_{L^{p,q}_\omega (\mathbb{R}^n)}<\infty$, where
\begin{align*}
\|f\|_{L^{p,q}_\omega (\mathbb{R}^n)}=\begin{cases}
\left(\frac{q}{p}\int_0^\infty \left[t^{\frac{1}{p}}f^*(t) \right]^q\frac{dt}{t} \right)^{\frac{1}{q}}, &{\rm if} ~ 0<q<\infty,\\
\sup\limits_{t>0}t^{\frac{1}{p}}f^*(t), &{\rm if}~ q=\infty.\end{cases}
\end{align*}
\end{definition} 

Remark that if either $1 < p < \infty$ and $1 \le q \le \infty$, or $p = q = 1$, or $p = q = \infty$ then $L^{p,q}_\omega (\mathbb{R}^n)$ is a quasi-Banach space.  Moreover, there is a constant $C > 0$ such that
\begin{align}\label{Lor-ineq}
C^{-1}\|f\|_{L^{p,q}_\omega (\mathbb{R}^n)}\le \sup\limits_{\|g\|_{L^{p',q'}_\omega(\mathbb R^n)}\le 1}\left|\int_{\,\mathbb{R}^n}f(x)g(x)\omega(x)dx  \right|\le C\|f\|_{L^{p,q}_\omega (\mathbb{R}^n)}.
\end{align}

\begin{corollary}\label{Cor2.3-WZC2017}
{\rm(page 253 in \cite{H1966} and Corollary 2.3 in \cite{WZC2017})} If $0 < r < q < p < \infty$, $1\leq q_1\leq q_2\leq \infty$ and $\omega$ is a non-negative weighted function on $\mathbb{R}^n$, then there exists a constant $C > 0$ such that
\begin{align*}
C\|\cdot\|_{M^p_{r,\omega}(\mathbb{R}^n)}&\le \|\cdot\|_{WM^p_{q,\omega}(\mathbb{R}^n)}\le \|\cdot\|_{M^p_{q,\omega}(\mathbb{R}^n)}\le \|\cdot\|_{WM^p_{p,\omega}(\mathbb{R}^n)}\nonumber
\\
&=\|\cdot\|_{L^{p,\infty}_\omega (\mathbb{R}^n)}\leq \|\cdot\|_{L^{p,q_2}_\omega (\mathbb{R}^n)}\leq \|\cdot\|_{L^{p,q_1}_\omega (\mathbb{R}^n)}.
\end{align*}
\end{corollary}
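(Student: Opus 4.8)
The plan is to establish the displayed chain one link at a time, grouping the links by the mechanism each requires: Chebyshev's inequality for the passage from a strong weighted Morrey norm to the weak one at the same exponent; a Kolmogorov‑type splitting argument for the passage from a weak Morrey norm at an exponent $t$ down to a strong Morrey norm at a smaller exponent $s<t$; and the elementary structure of the decreasing rearrangement for the three links at the right‑hand end. In every Morrey‑type norm occurring in the chain the power $\omega(Q)^{-\kappa}$ in front of the integral has $\kappa\ge 0$, so a cube $Q$ with $\omega(Q)=\infty$ contributes nothing when $\kappa>0$ and is handled by exhaustion when $\kappa=0$, while a cube with $\omega(Q)=0$ contributes nothing to any of the integrals; hence throughout we may freely restrict to cubes with $0<\omega(Q)<\infty$.

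The middle link $\|\cdot\|_{WM^p_{q,\omega}}\le\|\cdot\|_{M^p_{q,\omega}}$ is Chebyshev: for a cube $Q$ and $\lambda>0$ one has $\lambda^{q}\,\omega(\{x\in Q:|f(x)|>\lambda\})\le\int_Q|f|^q\omega\,dx$, and since $q\bigl(\tfrac1q-\tfrac1p\bigr)=1-\tfrac qp$, dividing by $\omega(Q)^{1-q/p}$, extracting $q$‑th roots, and taking the supremum over $\lambda$ and $Q$ gives the inequality with constant $1$.

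For the two links $C\,\|\cdot\|_{M^p_{r,\omega}}\le\|\cdot\|_{WM^p_{q,\omega}}$ and $\|\cdot\|_{M^p_{q,\omega}}\le\|\cdot\|_{WM^p_{p,\omega}}$ I would carry out a single computation for a general pair $0<s<t\le p$ and then specialize to $(s,t)=(r,q)$ and $(s,t)=(q,p)$. Fix a cube $Q$ (and assume the relevant weak norm is finite, else there is nothing to prove). The definition of the weak Morrey norm yields $\omega(\{x\in Q:|f(x)|>\lambda\})\le\lambda^{-t}\,\omega(Q)^{1-t/p}\,\|f\|_{WM^p_{t,\omega}}^{t}$ for every $\lambda>0$. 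I combine this with the trivial bound $\omega(Q)$, insert the resulting minimum into the layer‑cake identity $\int_Q|f|^s\omega\,dx=s\int_0^\infty\lambda^{s-1}\,\omega(\{x\in Q:|f|>\lambda\})\,d\lambda$, and split the $\lambda$‑integral at the crossover level $\lambda_\ast=\|f\|_{WM^p_{t,\omega}}\,\omega(Q)^{-1/p}$ where the two bounds agree. Because $s<t\le p$ both pieces converge, and the exponent bookkeeping collapses each of them to a constant depending only on $s$ and $t$ times $\|f\|_{WM^p_{t,\omega}}^{s}\,\omega(Q)^{1-s/p}$; dividing by $\omega(Q)^{1-s/p}$, taking $s$‑th roots and suprema gives $\|f\|_{M^p_{s,\omega}}\lesssim\|f\|_{WM^p_{t,\omega}}$ with an explicit constant. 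Checking that the split‑and‑optimize really produces the exponent $\omega(Q)^{1-s/p}$ demanded by the target norm is the only part of the argument that is not purely mechanical, and it is the main (and rather minor) obstacle.

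It remains to treat the tail. The identity $\|\cdot\|_{WM^p_{p,\omega}}=\|\cdot\|_{L^{p,\infty}_\omega}$ follows because for $q=p$ the Morrey power $\tfrac1p-\tfrac1p$ vanishes, so $\|f\|_{WM^p_{p,\omega}}=\sup_Q\sup_{\lambda>0}\lambda\,\omega(\{x\in Q:|f|>\lambda\})^{1/p}$; exhausting $\mathbb R^n$ by an increasing sequence of cubes and using continuity of the measure $\omega(x)dx$ from below turns the inner supremum over $Q$ into $\omega(\{x:|f|>\lambda\})=d_f(\lambda)$, and then $\sup_{\lambda>0}\lambda\,d_f(\lambda)^{1/p}=\sup_{t>0}t^{1/p}f^*(t)$ is immediate from the definition of the decreasing rearrangement. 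The link $\|f\|_{L^{p,\infty}_\omega}\le\|f\|_{L^{p,q_2}_\omega}$ is obtained by bounding the defining integral of $\|f\|_{L^{p,q_2}_\omega}$ from below by its restriction to an interval $(0,t_0)$ and using that $f^*$ is non‑increasing, which reproduces $t_0^{1/p}f^*(t_0)$ for every $t_0>0$. Finally, for $\|f\|_{L^{p,q_2}_\omega}\le\|f\|_{L^{p,q_1}_\omega}$ with $q_1\le q_2$, I would invoke the classical monotonicity of Lorentz quasinorms in the second index (page 253 of \cite{H1966}): writing $f^*(t)^{q_2}=f^*(t)^{q_2-q_1}f^*(t)^{q_1}$, estimating the first factor by $\bigl(t^{-1/p}\|f\|_{L^{p,\infty}_\omega}\bigr)^{q_2-q_1}\le\bigl(t^{-1/p}\|f\|_{L^{p,q_1}_\omega}\bigr)^{q_2-q_1}$ via the link just established, and integrating. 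Assembling the links in the stated order finishes the proof; apart from the Kolmogorov step everything is routine, and all the constants produced are harmless powers of the exponents $p,q,r,q_1,q_2$.
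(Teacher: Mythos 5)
Your argument is correct and is essentially the standard proof behind this statement, which the paper itself does not prove but quotes from \cite{H1966} and \cite{WZC2017}: Chebyshev for $\|\cdot\|_{WM^p_{q,\omega}}\le\|\cdot\|_{M^p_{q,\omega}}$, the Kolmogorov split-at-$\lambda_\ast$ computation (whose exponent bookkeeping does come out right, yielding the factor $\omega(Q)^{1-s/p}$ with constant $\bigl(\tfrac{t}{t-s}\bigr)^{1/s}$) for the two strong-below-weak links, exhaustion of $\mathbb R^n$ by cubes for $\|\cdot\|_{WM^p_{p,\omega}}=\|\cdot\|_{L^{p,\infty}_\omega}$, and monotonicity of the normalized Lorentz quasinorms for the tail. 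The only caveat is about constants: your route gives $\|\cdot\|_{M^p_{q,\omega}}\le\bigl(\tfrac{p}{p-q}\bigr)^{1/q}\|\cdot\|_{WM^p_{p,\omega}}$ and $\|\cdot\|_{L^{p,q_2}_\omega}\le\bigl(\tfrac{q_2}{q_1}\bigr)^{1/q_2}\|\cdot\|_{L^{p,q_1}_\omega}$ rather than the constant-$1$ inequalities displayed; for the first of these a constant is in fact unavoidable (take $\omega=1$ and $f(x)=|x|^{-n/p}$, for which $\|f\|_{WM^p_{p,\omega}}=\|f\|_{L^{p,\infty}}$ but $\|f\|_{M^p_{q,\omega}}$ exceeds it by a factor comparable to $\bigl(\tfrac{p}{p-q}\bigr)^{1/q}$), while for the second Hunt's sharper argument does give constant $1$; since the corollary is only ever used up to multiplicative constants in this paper, your version is entirely adequate.
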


Next, we present some basic facts on the class of weighted functions $A(p, 1)$ with $1 < p  < \infty$.  For further information on the weights, the interested readers may refer to the work \cite{CHK1982}. The weighted function $\omega(x)$ is in $A(p, 1)$ if there exists a positive constant $C$ such that for any cube $Q$, we have
\begin{align*}
\|\chi_Q \|_{L^{p,1}_\omega(\mathbb{R}^n)} \|\chi_Q\omega^{-1} \|_{L^{p',\infty}_\omega (\mathbb{R}^n)}\le C|Q|.
\end{align*}
%%%%%%%%%%%%%%%%%%%%%%%%%%%%%%%%
\begin{lemma}\label{Lem2.8-CHK1982}
{\rm(Lemma 2.8 in \cite{CHK1982})} For $1\leq p<\infty$, we have $\omega \in A(p, 1)$ if and only if there exists a constant $C$ such that for any cube $Q$ and subset $E \subset Q$,
\begin{align*}
\frac{|E|}{|Q|}\le C\left(\frac{\omega(E)}{\omega(Q)} \right)^{\frac{1}{p}}.
\end{align*}
\end{lemma}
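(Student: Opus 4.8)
The plan is to unwind the two factors in the definition of $A(p,1)$ and match them against the displayed two‑sided estimate. First I would evaluate $\|\chi_Q\|_{L^{p,1}_\omega(\mathbb R^n)}$ exactly: the distribution function of $\chi_Q$ with respect to $\omega(x)\,dx$ equals $\omega(Q)$ on $[0,1)$ and $0$ on $[1,\infty)$, so its decreasing rearrangement is $\chi_Q^*=\chi_{[0,\omega(Q))}$, and hence
\[
\|\chi_Q\|_{L^{p,1}_\omega(\mathbb R^n)}=\frac1p\int_0^{\omega(Q)}t^{\frac1p-1}\,dt=\omega(Q)^{\frac1p}.
\]
Consequently $\omega\in A(p,1)$ is equivalent to the existence of $C>0$ such that $\omega(Q)^{1/p}\,\|\chi_Q\,\omega^{-1}\|_{L^{p',\infty}_\omega(\mathbb R^n)}\le C|Q|$ for every cube $Q$, and everything reduces to comparing the weak Lorentz factor with $\sup_{E\subset Q}|E|\,\omega(E)^{-1/p}$.

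For the implication $\Leftarrow$ I would argue directly with level sets. Fix a cube $Q$ and $\lambda>0$, put $E_\lambda=\{x\in Q:\omega(x)<\lambda\}$, and note that $\omega(E_\lambda)$ is precisely the value of the distribution function of $\chi_Q\omega^{-1}$ (with respect to $\omega(x)\,dx$) at the height $\lambda^{-1}$. Since $\omega<\lambda$ on $E_\lambda$ we have $\omega(E_\lambda)\le\lambda|E_\lambda|$, so the hypothesis $|E_\lambda|/|Q|\le C\,(\omega(E_\lambda)/\omega(Q))^{1/p}$ yields $\omega(E_\lambda)\le C\lambda|Q|\,\omega(Q)^{-1/p}\,\omega(E_\lambda)^{1/p}$, i.e.\ $\lambda^{-1}\omega(E_\lambda)^{1/p'}\le C|Q|\,\omega(Q)^{-1/p}$ whenever $\omega(E_\lambda)>0$ (that value of $\lambda$ contributes nothing otherwise). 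Taking the supremum over $\lambda>0$ and using the standard identity $\|g\|_{L^{p',\infty}_\omega}=\sup_{\alpha>0}\alpha\,\omega(\{x:|g(x)|>\alpha\})^{1/p'}$, we get $\|\chi_Q\omega^{-1}\|_{L^{p',\infty}_\omega}\le C|Q|\,\omega(Q)^{-1/p}$, which is exactly the reduced form of $A(p,1)$ after multiplying by $\omega(Q)^{1/p}$.

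For the implication $\Rightarrow$ the tool I would isolate is the elementary weak‑type inequality: for a $\sigma$‑finite measure $\mu$, a function $g\in L^{p',\infty}(\mu)$ and a set $E$ with $\mu(E)<\infty$,
\[
\int_E|g|\,d\mu\le p\,\|g\|_{L^{p',\infty}(\mu)}\,\mu(E)^{1/p},
\]
obtained by writing $\int_E|g|\,d\mu=\int_0^\infty\mu(\{x\in E:|g(x)|>t\})\,dt\le\int_0^\infty\min\big\{\mu(E),\,\|g\|_{L^{p',\infty}(\mu)}^{p'}t^{-p'}\big\}\,dt$ and splitting at the crossover point $t=\|g\|_{L^{p',\infty}(\mu)}\,\mu(E)^{-1/p'}$. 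Applying it with $d\mu=\omega(x)\,dx$, $g=\chi_Q\omega^{-1}$ and an arbitrary $E\subset Q$ — so that $\int_E|g|\,d\mu=|E|$ and $\mu(E)=\omega(E)$ — gives $|E|\le p\,\|\chi_Q\omega^{-1}\|_{L^{p',\infty}_\omega}\,\omega(E)^{1/p}$; inserting the $A(p,1)$ bound $\|\chi_Q\omega^{-1}\|_{L^{p',\infty}_\omega}\le C|Q|\,\omega(Q)^{-1/p}$ then produces $|E|/|Q|\le pC\,(\omega(E)/\omega(Q))^{1/p}$, which is the claim.

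The endpoint $p=1$ (so $p'=\infty$) is handled the same way after replacing the weak‑$L^{p'}$ quasinorm by the essential supremum: $\|\chi_Q\omega^{-1}\|_{L^{\infty,\infty}_\omega}=\big(\operatorname*{ess\,inf}_{Q}\omega\big)^{-1}=\sup_{E\subset Q}|E|/\omega(E)$, and the two implications reduce to the familiar equivalence between the $A_1$ inequality $\int_Q\omega\le C|Q|\,\operatorname*{ess\,inf}_{Q}\omega$ and $|E|/|Q|\le C\,\omega(E)/\omega(Q)$. I do not expect a genuine obstacle here: the argument is a matter of unwinding the weighted Lorentz quasinorms. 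The points that deserve care are the exact evaluation of $\|\chi_Q\|_{L^{p,1}_\omega}$, the reparametrization between the two forms of the weak Lorentz quasinorm, and the fact that the elementary inequality above holds with the sharp constant $p'/(p'-1)=p$, which is what ultimately gets absorbed into $C$.
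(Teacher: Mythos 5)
Your proof is correct. Note that the paper does not prove this statement at all: it is imported verbatim as Lemma 2.8 of the cited reference [CHK1982], so there is no in-paper argument to compare against. Your reconstruction is essentially the standard one behind that reference: the exact evaluation $\|\chi_Q\|_{L^{p,1}_\omega}=\omega(Q)^{1/p}$ (the paper's normalization factor $q/p$ makes this come out clean), the level-set reparametrization $\omega(\{\chi_Q\omega^{-1}>\lambda^{-1}\})=\omega(\{x\in Q:\omega(x)<\lambda\})$ together with $\omega(E_\lambda)\le\lambda|E_\lambda|$ for one direction, and the weak-type estimate $\int_E|g|\,d\mu\le p\,\|g\|_{L^{p',\infty}(\mu)}\mu(E)^{1/p}$ applied to $g=\chi_Q\omega^{-1}$, $d\mu=\omega\,dx$ for the other; all steps check, including the sharp constant $p'/(p'-1)=p$. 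The only point worth flagging is the endpoint $p=1$ (which the lemma's statement includes but the paper's definition of $A(p,1)$ technically does not): your identification $\|\chi_Q\omega^{-1}\|_{L^{\infty,\infty}_\omega}=(\operatorname{ess\,inf}_Q\omega)^{-1}$ uses the $\omega\,dx$-essential supremum, so it tacitly assumes $\omega>0$ a.e.; under that harmless standing assumption on weights your reduction to the usual $A_1$ equivalence is fine.
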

%%%%%%%%%%%%%%%%%%%%%%%%%%%%%%%%%%%%%%
Remark that if $\omega\in A(p,1)$ with $1\leq p<\infty$ and $0<\kappa<1$, then $\mathcal{B}^{p,\kappa}_\omega(\mathbb R^n)=\mathcal{L}^{p,\kappa}_{\omega}(\mathbb{R}^n)$ with equivalence of norms.
\\

Let $1 \le r < \infty$ and $\vec{f}=\{f_k\}$ be a sequence of measurable functions on $\mathbb{R}^n$.
We denote
\begin{align*}
|\vec{f}(x)|_r=\left(\sum\limits_{k=1}^\infty |f_k(x)|^r \right)^{\frac{1}{r}}.
\end{align*} 

As usual, the vector-valued space $X(\ell^r,\mathbb{R}^n)$ is defined as the set of all sequences of measurable functions $\vec{f}=\{f_k\}$ such that
\begin{align*}
\|\vec{f} \|_{X(\ell^r,\mathbb{R}^n)}=\||\vec{f}(\cdot)|_r \|_X<\infty,
\end{align*}
where $X$ is an appropriate Banach space.

%%%%%%%%%%%%%%%%%%%%%%%%%%%%%%%%%%%%%%
Let us recall to define the BMO spaces of John and Nirenberg. For further
information on these spaces as well as their deep applications in harmonic analysis,
 one can see in the famous book of Stein \cite{St1993}.

\begin{definition}
The bounded mean oscillation space $BMO(\mathbb{R}^n)$ is defined as the set of all functions $b\in L^1_{\rm loc}(\mathbb{R}^n)$ such that
\begin{align*}
\|b\|_{BMO(\mathbb{R}^n)}=\sup\limits_{\rm cube ~Q}\frac{1}{|Q|}\int_Q |b(x)-b_Q|dx<\infty,
\end{align*}
where $b_Q=\frac{1}{|Q|}\int_Q b(x)dx$.
\end{definition}
%%%%%%%%%%%%%%%%%%%%%%%%%%%%%%%%%
\begin{lemma}\label{BMO-Lemma}
{\rm (\cite{St1993})}
If $1<p<\infty$,  we then have
\begin{align*}
\|b\|_{BMO(\mathbb{R}^n)}\simeq \sup\limits_{\rm cube ~Q}\Big(\frac{1}{|Q|}\int_Q |b(x)-b_Q|^pdx\Big)^{\frac{1}{p}}:=\|b\|_{BMO^p(\mathbb R^n)}.
\end{align*}
\end{lemma}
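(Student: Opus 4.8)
The plan is to prove the two inequalities hidden in the symbol $\simeq$ separately; the nontrivial direction rests entirely on the John--Nirenberg inequality, which I will invoke from \cite{St1993}.

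First I would dispose of the easy inequality $\|b\|_{BMO(\mathbb R^n)}\le \|b\|_{BMO^p(\mathbb R^n)}$. For any cube $Q$, H\"older's inequality with exponents $p$ and $p'$ gives
\begin{align*}
\frac{1}{|Q|}\int_Q|b(x)-b_Q|\,dx\le\Big(\frac{1}{|Q|}\int_Q|b(x)-b_Q|^p\,dx\Big)^{\frac1p}\Big(\frac{1}{|Q|}\int_Q 1\,dx\Big)^{\frac1{p'}}=\Big(\frac{1}{|Q|}\int_Q|b(x)-b_Q|^p\,dx\Big)^{\frac1p},
\end{align*}
and taking the supremum over all cubes $Q$ yields the claim. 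This half needs no hypothesis on $p$ beyond $p\ge 1$.

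For the reverse inequality $\|b\|_{BMO^p(\mathbb R^n)}\le C\|b\|_{BMO(\mathbb R^n)}$, the key tool is the John--Nirenberg inequality: there are absolute constants $C_1,C_2>0$ so that for every cube $Q$ and every $\alpha>0$,
\begin{align*}
\big|\{x\in Q:|b(x)-b_Q|>\alpha\}\big|\le C_1\,|Q|\,\exp\!\Big(-\frac{C_2\,\alpha}{\|b\|_{BMO(\mathbb R^n)}}\Big).
\end{align*}
Assuming, as we may after normalizing, that $\|b\|_{BMO(\mathbb R^n)}=1$ and that it is nonzero, I would then write the $L^p$ average over $Q$ via the distribution function of $b-b_Q$ on $Q$ (with respect to Lebesgue measure), namely
\begin{align*}
\int_Q|b(x)-b_Q|^p\,dx=p\int_0^\infty \alpha^{p-1}\,\big|\{x\in Q:|b(x)-b_Q|>\alpha\}\big|\,d\alpha\le C_1\,|Q|\,p\int_0^\infty \alpha^{p-1}e^{-C_2\alpha}\,d\alpha.
\end{align*}
The last integral is $C_2^{-p}\Gamma(p)$, which is finite precisely because $p<\infty$ (and positive because $p>0$), so dividing by $|Q|$ and taking the $p$-th root gives $\big(\frac1{|Q|}\int_Q|b-b_Q|^p\big)^{1/p}\le \big(C_1\,p\,\Gamma(p)\big)^{1/p}C_2^{-1}=:C$, uniformly in $Q$. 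Undoing the normalization replaces the right-hand side by $C\|b\|_{BMO(\mathbb R^n)}$, and taking the supremum over $Q$ finishes the proof.

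The only real obstacle is the John--Nirenberg inequality itself, which is where the exponential decay of the distribution function — and hence the finiteness of every $L^p$ average for $p<\infty$ — comes from; since the excerpt directs the reader to \cite{St1993} for BMO theory, I would simply cite it there rather than reproving it via the Calder\'on--Zygmund decomposition. A minor point to check is that the argument is vacuous but trivially true when $\|b\|_{BMO(\mathbb R^n)}=0$, in which case $b$ is a.e.\ constant and both sides vanish.
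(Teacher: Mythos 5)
Your proof is correct and is exactly the standard argument behind this lemma: H\"older (or Jensen) for the trivial inequality $\|b\|_{BMO}\le\|b\|_{BMO^p}$, and the John--Nirenberg inequality together with the layer-cake formula for the reverse one. The paper offers no proof of its own and simply cites \cite{St1993}, where this is precisely the argument given, so your proposal matches the intended route.
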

%%%%%%%%%%%%%%%%%%%%%%%%%%%%%%%%%
\begin{proposition}\label{Pro-T1986}
{\rm (Proposition 3.2 in \cite{T1986})}
If $b\in BMO(\mathbb R^n)$, then 
$$
|b_{2^{j+1}B}-b_B|\leq 2^n(j+1)\|b\|_{BMO(\mathbb R^n)},\,\textit{\rm for all}\, j\in\mathbb N.
$$
\end{proposition}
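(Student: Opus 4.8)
The statement is the classical telescoping estimate for $BMO$ averages, and the plan is to prove it by iterating the one-step comparison between the averages over two concentric balls whose radii differ by a factor $2$.

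First, fix a ball $B$ and for each integer $i$ with $0\le i\le j$ compare the averages $b_{2^iB}$ and $b_{2^{i+1}B}$. Since $2^iB\subset 2^{i+1}B$, we may subtract the constant $b_{2^{i+1}B}$ and estimate
$$
|b_{2^iB}-b_{2^{i+1}B}|=\frac{1}{|2^iB|}\left|\int_{2^iB}\bigl(b(x)-b_{2^{i+1}B}\bigr)\,dx\right|\le \frac{1}{|2^iB|}\int_{2^{i+1}B}|b(x)-b_{2^{i+1}B}|\,dx.
$$
Because $|2^{i+1}B|=2^n|2^iB|$, the right-hand side equals $2^n\cdot\frac{1}{|2^{i+1}B|}\int_{2^{i+1}B}|b(x)-b_{2^{i+1}B}|\,dx$, which by the definition of the $BMO$ norm (taken over balls, equivalently over cubes) is at most $2^n\|b\|_{BMO(\mathbb R^n)}$.

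Next, I would sum these one-step estimates. Applying the triangle inequality along the chain $B\subset 2B\subset\cdots\subset 2^{j+1}B$ gives
$$
|b_{2^{j+1}B}-b_B|\le \sum_{i=0}^{j}|b_{2^{i+1}B}-b_{2^iB}|\le (j+1)\,2^n\,\|b\|_{BMO(\mathbb R^n)},
$$
which is precisely the asserted inequality.

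There is no genuine obstacle in this argument; the only point worth a remark is that the $BMO$ norm in Section~\ref{section2} is defined through cubes while the statement is phrased with balls, so one should either invoke the standard equivalence of the two formulations up to a dimensional constant, or simply run the same telescoping with the dyadic cubes naturally associated to the balls $2^iB$. (A slightly sharper constant is available by subtracting $b_{2^iB}$ rather than $b_{2^{i+1}B}$ in the one-step estimate, but the stated bound $2^n(j+1)\|b\|_{BMO(\mathbb R^n)}$ already follows from the crude version above.)
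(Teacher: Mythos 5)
Your telescoping argument is correct and is the standard proof of this fact; the paper itself offers no proof, simply citing Proposition 3.2 of Torchinsky's book, and your argument is essentially the one found there. Your remark about cubes versus balls (or, alternatively, the loss of a fixed dimensional constant) is the only point needing care, and you handle it appropriately.
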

%%%%%%%%%%%%%%%%%%%%%%%%%%%%%%%%%%
%In the sequel, the theory of $A_p$ weight was first introduced by Muckenhoupt in the Euclidean spaces for studying the weighted $L^p$ boundedness of Hardy- Littlewood maximal functions (see \cite{Mu1972}).
Let us recall the definition of $A_p$ weights. For further readings on $A_p$ weights, the reader may find in the interesting book \cite{Grafakos2008}.
\begin{definition}
Let $1 < p < \infty$. It is said that a weight $\omega \in A_p(\mathbb{R}^n)$ if there exists a constant $C$ such that for all cubes $Q$,
\begin{align*}
\left(\frac{1}{|Q|}\int_Q \omega(x)dx \right) \left(\frac{1}{|Q|}\int_Q\omega(x)^{-\frac{1}{p-1}}dx \right)^{p-1}\le C.
\end{align*}
\end{definition}
A weight $\omega \in  A_1(\mathbb{R}^n)$ if there is a constant $C$ such that
\begin{align*}
M(\omega)(x)\le C\omega(x), \;{ \rm for \; a. e} ~x\in \mathbb{R}^n.
\end{align*}
%where $M$ is Hardy-Littlewood maximal operator. 
We denote $A_\infty(\mathbb{R}^n)=\mathop\cup\limits_{1\le p<\infty}A_p(\mathbb{R}^n)$.
 %%%%%%%%%%%%%%%%%%%%%%%%%%%%%%%%%%%
 \\
 
A closing relation to $A_\infty(\mathbb{R}^n)$ is the reverse H\"{o}lder condition. If there exist $r > 1$ and a fixed	constant $C$ such that
\begin{align*}
\Big(\frac{1}{|B|}\int_B \omega(x)^rdx \Big)^{\frac{1}{r}}\le \frac{C}{|B|}\int_B \omega(x) dx,
\end{align*}
for all balls $B \subset \mathbb{R}^n$, we then say that $\omega$ satisfies the reverse H\"{o}lder condition of order $r$ and write $\omega\in  RH_r (\mathbb{R}^n)$. According to Theorem 19 and Corollary 21 in \cite{IMS2015}, $\omega\in A_\infty (\mathbb{R}^n)$ if and only if there exists some $r  > 1$ such that
$\omega\in  RH_r (\mathbb{R}^n)$.  Moreover, if $\omega\in  RH_r (\mathbb{R}^n),r>1$, then $\omega\in  RH_{r+\varepsilon} (\mathbb{R}^n)$ for some $\varepsilon>0$. We thus write $r_\omega  = \sup\{r > 1 : \omega\in  RH_r (\mathbb{R}^n)\}$ to denote the
critical index of $\omega$ for the reverse H\"{o}lder condition.
%%%%%%%%%%%%%%%%%%%%%%%%%%%%%%%%%%%%%%%%
\begin{proposition}\label{rever-Holder}
Let $\omega \in A_p(\mathbb{R}^n) \cap RH_r(\mathbb{R}^n), p \ge 1$ and $r > 1$. Then, there exist two constants $C_1, C_2 > 0$ such that
\begin{align*}
C_1\left( \frac{|E|}{|B|}\right)^p\le \frac{\omega(E)}{\omega(B)} \le C_2\left( \frac{|E|}{|B|}\right)^{\frac{r-1}{r}},
\end{align*}
for any ball $B$ and for any measurable subset $E$ of $B$.
\end{proposition}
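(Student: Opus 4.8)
The plan is to establish the two inequalities independently: the upper estimate comes from the reverse Hölder condition $\omega\in RH_r(\mathbb R^n)$ together with Hölder's inequality, and the lower estimate from the $A_p$ condition. Throughout I will use the elementary fact that the $A_p$ and $RH_r$ conditions, whether phrased in terms of cubes or of balls, are equivalent up to a purely dimensional constant, since every ball sits between two cubes of comparable measure; this lets me pass freely between balls and cubes and absorb the resulting factors into $C_1$ and $C_2$.

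For the upper bound, fix a ball $B$ and a measurable set $E\subset B$. Applying Hölder's inequality with exponents $r$ and $r'=r/(r-1)$ gives
\[
\omega(E)=\int_E\omega(x)\,dx\le\Big(\int_B\omega(x)^r\,dx\Big)^{1/r}|E|^{(r-1)/r}.
\]
By the reverse Hölder condition, $\big(\int_B\omega^r\big)^{1/r}\le C|B|^{1/r}\big(\tfrac1{|B|}\int_B\omega\big)=C|B|^{1/r-1}\omega(B)$, and since $1/r-1=-(r-1)/r$ we obtain $\omega(E)\le C\,\omega(B)\,(|E|/|B|)^{(r-1)/r}$, which is the right-hand inequality with $C_2=C$.

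For the lower bound, suppose first that $p>1$. Using Hölder's inequality with exponents $p$ and $p'=p/(p-1)$ applied to $1=\omega^{1/p}\cdot\omega^{-1/p}$, and then enlarging the domain of integration,
\[
|E|=\int_E\omega(x)^{1/p}\omega(x)^{-1/p}\,dx\le\omega(E)^{1/p}\Big(\int_B\omega(x)^{-1/(p-1)}\,dx\Big)^{(p-1)/p}.
\]
The defining inequality of $A_p$ yields $\int_B\omega^{-1/(p-1)}\le C^{1/(p-1)}|B|^{p/(p-1)}\omega(B)^{-1/(p-1)}$; raising this to the power $(p-1)/p$ and inserting it above gives $|E|\le C^{1/p}|B|\,(\omega(E)/\omega(B))^{1/p}$, that is, $\omega(E)/\omega(B)\ge C^{-1}(|E|/|B|)^p$, the desired left-hand inequality with $C_1=C^{-1}$. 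When $p=1$ I argue directly from $\omega\in A_1$: for a.e.\ $x\in B$ one has $\omega(x)\ge C^{-1}M\omega(x)\ge C^{-1}\tfrac1{|B|}\int_B\omega=C^{-1}\omega(B)/|B|$, so integrating over $E$ gives $\omega(E)\ge C^{-1}(|E|/|B|)\,\omega(B)$, which is again the left-hand inequality since here the exponent $p$ equals $1$.

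The proof is essentially routine; the only points needing a little care are the exact bookkeeping of the conjugate exponents in the two applications of Hölder's inequality and the passage between cubes and balls in the $A_p$ and reverse Hölder conditions. I do not anticipate any genuine obstacle.
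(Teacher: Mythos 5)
Your proof is correct: the upper estimate via H\"older with exponents $r,r'$ plus the $RH_r$ condition, and the lower estimate via H\"older applied to $\omega^{1/p}\omega^{-1/p}$ plus the $A_p$ condition (with the direct pointwise argument when $p=1$), are exactly the standard derivation, and your exponent bookkeeping checks out. The paper itself offers no proof of Proposition \ref{rever-Holder} --- it is quoted as a known property of $A_p\cap RH_r$ weights --- so there is nothing to contrast with; the one point worth keeping explicit, which you do flag, is the passage between the paper's cube-based $A_p$ condition and the ball $B$ (enlarge $B$ to a cube $Q$ with $|Q|\simeq|B|$ and use $\omega(Q)\ge\omega(B)$), which only costs a dimensional constant.
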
 
%%%%%%%%%%%%%%%%%%%%%%%%%%%%%%%%%%%%
\begin{proposition}\label{pro2.4DFan}
If $\omega\in A_p(\mathbb R^n)$, $1 \leq p < \infty$, then for any $f\in L^1_{\rm loc}(\mathbb R^n)$ and any ball $B \subset \mathbb R^n$, we have
$$
\dfrac{1}{|B|}\int_{B}|f(x)|dx\leq C\Big(\dfrac{1}{\omega(B)}\int_{B}|f(x)|^p\omega(x)dx \Big)^{\frac{1}{p}}.
$$
\end{proposition}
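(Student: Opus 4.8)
The plan is to treat the endpoint case $p=1$ and the range $1<p<\infty$ separately, both reducing to the very definition of the $A_p$ class. For $p=1$, recall that $\omega\in A_1(\mathbb R^n)$ means $M(\omega)(x)\le C\omega(x)$ for a.e.\ $x$; in particular, for a fixed ball $B$ and a.e.\ $x\in B$ one has $\frac{1}{|B|}\int_B\omega(y)\,dy\le M(\omega)(x)\le C\omega(x)$, i.e.\ $\omega(x)\ge C^{-1}\,\omega(B)/|B|$ for a.e.\ $x\in B$. Multiplying this pointwise bound by $|f(x)|$ and integrating over $B$ gives $\frac{\omega(B)}{|B|}\int_B|f(x)|\,dx\le C\int_B|f(x)|\omega(x)\,dx$, which is exactly the assertion for $p=1$.

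For $1<p<\infty$, the idea is to split the weight and apply H\"older's inequality: writing $|f|=\big(|f|^p\omega\big)^{1/p}\cdot\omega^{-1/p}$ and using the exponents $p$ and $p'=p/(p-1)$, we obtain
\begin{align*}
\int_B|f(x)|\,dx\le\Big(\int_B|f(x)|^p\omega(x)\,dx\Big)^{1/p}\Big(\int_B\omega(x)^{-p'/p}\,dx\Big)^{1/p'}.
\end{align*}
The key bookkeeping point is that $p'/p=\frac{1}{p-1}$, so the second factor is $\big(\int_B\omega(x)^{-1/(p-1)}\,dx\big)^{(p-1)/p}$, and the $A_p$ condition
$\big(\frac{1}{|B|}\int_B\omega\big)\big(\frac{1}{|B|}\int_B\omega^{-1/(p-1)}\big)^{p-1}\le C$
rearranges to $\int_B\omega(x)^{-1/(p-1)}\,dx\le C^{1/(p-1)}\,|B|^{p/(p-1)}\,\omega(B)^{-1/(p-1)}$. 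Substituting this back and raising to the power $(p-1)/p$ yields $\big(\int_B\omega^{-1/(p-1)}\big)^{1/p'}\le C^{1/p}\,|B|\,\omega(B)^{-1/p}$, and combining with the H\"older estimate gives $\frac{1}{|B|}\int_B|f|\,dx\le C^{1/p}\big(\frac{1}{\omega(B)}\int_B|f|^p\omega\big)^{1/p}$, as desired.

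I do not expect any genuine obstacle here: the statement is a standard consequence of the $A_p$ definition, and the only thing needing attention is the correct identification of the dual exponent and the algebraic rearrangement of the $A_p$ inequality so that the power of $|B|$ and of $\omega(B)$ come out right. One could alternatively phrase the $1<p<\infty$ argument as a direct application of Proposition \ref{rever-Holder} together with H\"older, but the computation above is self-contained and slightly shorter.
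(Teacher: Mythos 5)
Your argument is correct and is the standard proof: the paper itself states Proposition \ref{pro2.4DFan} without giving any proof, and the natural argument is exactly what you wrote --- the pointwise bound $\omega(x)\ge C^{-1}\omega(B)/|B|$ a.e.\ on $B$ coming from the $A_1$ condition when $p=1$, and H\"older's inequality with exponents $p,p'$ followed by the rearranged $A_p$ condition when $1<p<\infty$. The only point worth a word is that the paper's $A_p$ and $A_1$ conditions are formulated over cubes while the proposition concerns balls; this is immaterial, since the cube and ball formulations of $A_p$ are equivalent up to a dimensional constant (or one can simply run your computation on a cube $Q\supset B$ with $|Q|\le c_n|B|$).
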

%%%%%%%%%%%%%%%%%%%%%%%%%%%%%%%%%%%
Next, we write $\omega\in \Delta_2$, the class of doubling weights, if there exists $D > 0$ such that for any cube $Q$, we have
\begin{align*}
\omega(2Q)\le D\omega(Q).
\end{align*}
It is known that if $\omega\in A_\infty(\mathbb{R}^n)$ then $\omega\in \Delta$. 
%It is also useful that in 2012, Tang \cite{Ta2012} established the weighted norm inequalities for pseudodifferential operators with smooth symbols associated to the class of new weighted functions including the Muckenhoupt weighted functions.
Now, let us recall the class of $ A_p(\varphi)$ weights proposed by Tang in the work \cite{Ta2012}.

%%%%%%%%%%%%%%%%%%%%%%%%%%%%%%%%%%%%%%%%%
\begin{definition}
Let $1 < p < \infty$ and $\varphi(t)=(1+t)^{\alpha_0}$  for $\alpha_0>0$ and $t\ge 0$. We say that a weight $\omega\in A_p(\varphi)$ if there exists a constant $C$ such that for all cubes $Q$,
\begin{align*}
\Big(\frac{1}{\varphi(|Q|)|Q|}\int_Q \omega(x)dx \Big).\Big(\frac{1}{\varphi(|Q|)|Q|}\int_Q\omega(x)^{-\frac{1}{p-1}}dx \Big)^{p-1}\le C.
\end{align*}
A weight $\omega\in A_1(\varphi)$ if there is a constant $C$ such that
$$
M_\varphi(f)(x)\le C\omega(x), {\rm\; for\; a. e} ~x\in \mathbb{R}^n,
$$
where
\begin{align*}
M_\varphi(f)(x)=\sup\limits_{x\in {\rm cube }~Q}\frac{1}{\varphi(|Q|)|Q|}\int_Q |f(y)|dy.
\end{align*} 
\end{definition}
Denote $A_\infty(\varphi)=\mathop\cup\limits_{1\le p<\infty} A_p(\varphi)$. It is useful to remark that the  $ A_p(\varphi)$ weights do not satisfy the doubling condition. For instance, $\omega(x)=(1+|x|)^{(-n+\eta)}$ for $0\leq\eta\leq n\alpha_0$ is in $A_1(\varphi)$, but not in $A_p$ weights and $\omega(x)dx$ is not a doubling measure. It is also important to see that $M_\varphi$ may be not bounded on the weighted Lebesgue spaces $L^p_\omega(\mathbb R^n)$ for every $\omega\in A_p(\varphi)$. To be more precise, see in Lemma 2.3 in \cite{Ta2012}. Similarly, in this paper we also introduce a class of dyadic weighted functions associated to the function $\varphi$ as follows.
\begin{definition} 
Let $1<p<\infty$ and $0<\eta<\infty$. A weight $\omega\in A_p^{\Delta,\eta}(\varphi)$ if there exists a constant $C$ such that for all dyadic cubes $Q$,
\begin{align*}
\Big(\frac{1}{\varphi(|Q|)^{\eta}|Q|}\int_Q \omega(x)dx \Big).\Big(\frac{1}{\varphi(|Q|)^{\eta}|Q|}\int_Q\omega(x)^{-\frac{1}{p-1}}dx \Big)^{p-1}\le C.
\end{align*}
\end{definition}

It is obvious that  $A_{p_1}^{\Delta,\eta}(\varphi)\subset A_{p_2}^{\Delta,\eta}(\varphi)$ for all $1<p_1<p_2<\infty$. It is also easy to show that $A_p(\mathbb{R}^n)\subset A_p(\varphi)\subset A_p^{\Delta,\eta}(\varphi)$ with $1<p<\infty$ and $0< \eta<\infty$. In particular,  $A_1(\mathbb R^n) \subset A_1(\varphi)$. 

 %%%%%%%%%%%%%%%%%%%%%%%%%%%%%%%%%%%
Next, we give the definitions of the maximal operators $M_\omega$ and $M^\Delta_{\varphi,\eta}$ as follows
$$
M_\omega(f)(x)=\sup\limits_{x\in\text{ball\;} B}\frac{1}{\omega(5B)}\int_B|f(y)|\omega(y)dy,$$
$$M^\Delta_{\varphi,\eta}(f)(x)=\sup\limits_{x\in {\rm dyadic\,cube}\,Q}\frac{1}{\varphi(|Q|)^\eta |Q|}\int_Q |f(y)|dy, {\;\rm for~all}~0<\eta<\infty.
$$
Remark that by the similar arguments to Lemma 2.1 in \cite{Ta2012}, we also have
$$M^\Delta_{\varphi,\eta}(f)(x)\lesssim \left(M_\omega(|f|^p)(x)\right)^{\frac{1}{p}}, \;\;x\in\mathbb R^n,$$
where $\omega\in A_{p}^{\Delta,\eta}(\varphi)$ for $0<\eta<\infty$ and $1<p<\infty$. Moreover, we also get the same result as in Lemma 2.3 of the paper \cite{Ta2012}. 
\begin{lemma}\label{Lemma-dyadic}
Let $1<p<\infty$ and $\omega\in A_p^{\Delta,\eta}(\varphi)$. Then, for any $p<r<\infty$, we have
$$
\|M^\Delta_{\varphi,\eta}(f)\|_{L^{r}_\omega(\mathbb R^n)}\leq C \|f\|_{L^{r}_\omega(\mathbb R^n)}.
$$
\end{lemma}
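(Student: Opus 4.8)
The plan is to reduce the estimate to the $L^{r/p}_\omega$-boundedness of the weighted maximal operator $M_\omega$, following the scheme of the proof of Lemma 2.3 in \cite{Ta2012}. First I would invoke the pointwise inequality recorded just before the statement, namely
\begin{align*}
M^\Delta_{\varphi,\eta}(f)(x)\lesssim \big(M_\omega(|f|^p)(x)\big)^{\frac{1}{p}},\qquad x\in\mathbb R^n,
\end{align*}
which is available precisely because $\omega\in A_p^{\Delta,\eta}(\varphi)$. Its proof is the dyadic counterpart of Lemma 2.1 in \cite{Ta2012}: for a dyadic cube $Q$ containing $x$ one writes $\int_Q|f|=\int_Q|f|\,\omega^{1/p}\,\omega^{-1/p}$, applies H\"older's inequality with exponents $p$ and $p'$, and uses the $A_p^{\Delta,\eta}(\varphi)$ condition to bound $\big(\tfrac{1}{\varphi(|Q|)^{\eta}|Q|}\int_Q\omega^{-1/(p-1)}\big)^{p-1}$ by a constant times $\big(\tfrac{1}{\varphi(|Q|)^{\eta}|Q|}\int_Q\omega\big)^{-1}$, which after simplification controls the dyadic average of $|f|$ over $Q$ by $\big(M_\omega(|f|^p)(x)\big)^{1/p}$.

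Granting this, and assuming $f\in L^r_\omega(\mathbb R^n)$ (otherwise there is nothing to prove), I would raise the pointwise bound to the power $r$ and integrate against $\omega\,dx$, getting
\begin{align*}
\|M^\Delta_{\varphi,\eta}(f)\|_{L^r_\omega(\mathbb R^n)}^{r}\lesssim\int_{\mathbb R^n}\big(M_\omega(|f|^p)(x)\big)^{\frac{r}{p}}\omega(x)\,dx=\big\|M_\omega(|f|^p)\big\|_{L^{r/p}_\omega(\mathbb R^n)}^{r/p}.
\end{align*}
Since $r>p$, the exponent $s:=r/p$ is strictly greater than $1$, so the claim will follow once we know that $M_\omega$ is bounded on $L^{s}_\omega(\mathbb R^n)$; applying this with $s=r/p$ to $|f|^p\in L^{r/p}_\omega(\mathbb R^n)$ gives $\big\|M_\omega(|f|^p)\big\|_{L^{r/p}_\omega(\mathbb R^n)}^{r/p}\lesssim\big\||f|^p\big\|_{L^{r/p}_\omega(\mathbb R^n)}^{r/p}=\|f\|_{L^r_\omega(\mathbb R^n)}^{r}$, and taking $r$-th roots finishes the argument.

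The one substantive ingredient is therefore the boundedness of $M_\omega$ on $L^{s}_\omega(\mathbb R^n)$ for $1<s\le\infty$. The case $s=\infty$ is immediate since $M_\omega(g)(x)\le\|g\|_{L^\infty}$ for all $x$ (because $\omega(B)\le\omega(5B)$), and for the endpoint $s=1$ one has the weak-type estimate $\omega(\{x:M_\omega(g)(x)>\lambda\})\le C\lambda^{-1}\|g\|_{L^1_\omega(\mathbb R^n)}$, proved by the usual Vitali covering argument: cover a compact subset of the level set by balls $B$ with $\tfrac{1}{\omega(5B)}\int_B|g|\omega>\lambda$, pass to a disjoint subfamily whose $5$-dilates still cover, and sum. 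The key point is that the normalization uses the enlarged ball $5B$, which is exactly what makes this covering argument work \emph{without} any doubling assumption on $\omega$ --- essential here, since $A_p^{\Delta,\eta}(\varphi)$ weights need not be doubling. Marcinkiewicz interpolation between the weak $(1,1)$ bound and the trivial $(\infty,\infty)$ bound then yields the strong $(s,s)$ bound for $1<s<\infty$, which completes the plan. I expect the part requiring the most care to be the passage from dyadic-cube averages to ball averages inside the pointwise estimate (handled as in \cite{Ta2012}); the rest is a routine chain of inequalities, with the hypothesis $r>p$ entering only to guarantee $r/p>1$.
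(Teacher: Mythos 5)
Your proposal is correct and follows essentially the same route the paper itself takes: the paper justifies this lemma only by the pointwise estimate $M^\Delta_{\varphi,\eta}(f)\lesssim\big(M_\omega(|f|^p)\big)^{1/p}$ recorded in the remark immediately before the statement together with the argument of Lemma 2.3 in \cite{Ta2012}, i.e.\ the weak $(1,1)$ bound for $M_\omega$ (whose $\omega(5B)$ normalization makes the Vitali covering work with no doubling), the trivial $L^\infty$ bound, and interpolation using $r/p>1$. The one step you rightly flag as delicate --- passing from the dyadic-cube average to the $\omega(5B)$-normalized ball average, which needs the $A_p^{\Delta,\eta}(\varphi)$ condition applied on an enlarged cube plus $\varphi(c|Q|)\simeq\varphi(|Q|)$, not just the one-cube H\"older computation --- is precisely the point the paper also leaves to its citation of Lemma 2.1 in \cite{Ta2012}.
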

It seems to see that the inequality in Lemma \ref{Lemma-dyadic} may be not valid for $r=p$.
%%%%%%%%%%%%%%%%%%%%%%%%%%%%%%%%%%%%%%%%
%%%%%%%%%%%%%%%%%%%%%%%%%%%%%%%%%%%%%%%%
\begin{theorem}\label{Theo3.1-AJ1981}
{\rm (Theorem 3.1 in \cite{AJ1981})} If $1 < p < \infty$, then the operator $M$  is bounded from $L^p_\omega(\ell^r, \mathbb{R}^n)$ to itself if and only if $\omega\in A_p$.
\end{theorem}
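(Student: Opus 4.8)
The plan is to prove the two implications separately, with essentially all the work in the ``if'' direction. The necessity is a one-line reduction: if $M$ is bounded on $L^p_\omega(\ell^r,\mathbb R^n)$, then applying the inequality to one-term sequences $\vec f=(f,0,0,\dots)$, for which $|\vec f|_r=|f|$ and $|M\vec f|_r=Mf$, shows that $M$ is bounded on the scalar space $L^p_\omega(\mathbb R^n)$, and hence $\omega\in A_p$ by Muckenhoupt's inequality \eqref{maximal function}. For the sufficiency, assume $\omega\in A_p$; after truncating the sequence and using monotone convergence it suffices to bound $\||M\vec f|_r\|_{L^p_\omega}$ by $\||\vec f|_r\|_{L^p_\omega}$ for finite sequences $\vec f=(f_1,\dots,f_N)$ with a constant independent of $N$.

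The range $p\le r\le\infty$ is elementary. For $r=p$, Tonelli's theorem and \eqref{maximal function} give $\||M\vec f|_p\|_{L^p_\omega}^p=\sum_k\|Mf_k\|_{L^p_\omega}^p\lesssim\sum_k\|f_k\|_{L^p_\omega}^p=\||\vec f|_p\|_{L^p_\omega}^p$. For $r=\infty$, the pointwise bound $|M\vec f|_\infty=\sup_kMf_k\le M(|\vec f|_\infty)$ together with \eqref{maximal function} does it. For $p<r<\infty$ one linearizes the maximal operator: for the fixed sequence $\vec f$ choose, for each $k$ and $x$, a cube $Q_k(x)\ni x$ with $\frac1{|Q_k(x)|}\int_{Q_k(x)}|f_k|>\frac12 Mf_k(x)$, and set $S\colon\{h_k\}\mapsto\{A_{Q_k(\cdot)}h_k\}$ with $A_Q h=\frac1{|Q|}\int_Q h$. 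Since $S$ is \emph{linear} and $|A_{Q_k(x)}h_k|\le Mh_k\le M(\sup_j|h_j|)$, it is bounded on $L^p_\omega(\ell^p)$ (by the $r=p$ computation) and on $L^p_\omega(\ell^\infty)$, in both cases with constant controlled by $\|M\|_{L^p_\omega\to L^p_\omega}$ and independent of the merely measurable choice of cubes and of $N$; Calder\'on's complex interpolation of these two $\ell$-valued $L^p$-spaces then makes $S$ bounded on $L^p_\omega(\ell^r)$, and applying $S$ to $\{|f_k|\}$ and using $Mf_k\le 2A_{Q_k(x)}|f_k|$ yields the claim for $p<r<\infty$.

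The substantive case is $1<r<p$, and I do not see a soft reduction to the previous paragraph: raising $\||M\vec f|_r\|_{L^p_\omega}$ to the $r$th power, dualizing the resulting $L^{p/r}_\omega$ norm and invoking the scalar inequality $\int(Mg)^r h\lesssim\int|g|^r Mh$ leaves one needing $M$ to be bounded on $L^{(p/r)'}_{\omega^{1-(p/r)'}}$, i.e.\ $\omega\in A_{p/r}$, which is not available. Instead I would appeal to Rubio de Francia's extrapolation theorem. The family of pairs $(|f|,Mf)$, $f\in C^\infty_c(\mathbb R^n)$, satisfies the scalar weighted estimate $\int(Mf)^{p_0}v\,dx\lesssim\int|f|^{p_0}v\,dx$ for $p_0=2$ and every $v\in A_{p_0}$ — this is precisely \eqref{maximal function} — and the vector-valued form of extrapolation upgrades it to $\||M\vec f|_r\|_{L^p_\omega}\lesssim\||\vec f|_r\|_{L^p_\omega}$ for all $1<p,r<\infty$ and all $\omega\in A_p$, in particular for $1<r<p$; extending from $C^\infty_c(\mathbb R^n)$ to general sequences is the routine density/truncation step already noted. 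Combining the three ranges of $r$ and letting $N\to\infty$ completes the proof. The main obstacle is exactly this range: its proof runs through the Rubio de Francia iteration algorithm, whose engine is the self-improvement of the $A_p$ condition — a property due to Coifman and Fefferman, recalled above, that an $A_p$ weight satisfies a reverse H\"older inequality and hence lies in some $A_{p-\varepsilon}$ — together with the uniform control of $\|M\|_{L^t_v\to L^t_v}$ by the $A_t$ constant of $v$. Everything else — Tonelli's theorem, the $\ell^\infty$-domination, the linearization, and Calder\'on interpolation — is routine.
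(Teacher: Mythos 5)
The first thing to note is that the paper contains no proof of this statement at all: it is imported verbatim as Theorem 3.1 of Andersen and John \cite{AJ1981} and used downstream as a black box, so there is no internal argument to compare yours with line by line. Judged on its own, your proposal is a correct standard assembly of the known proof ingredients. The necessity via one-term sequences together with the ``only if'' half of Muckenhoupt's theorem \eqref{maximal function} is fine; the cases $r=p$ (Tonelli), $r=\infty$ (the pointwise domination $\sup_k Mf_k\le M(|\vec f|_\infty)$), and $p<r<\infty$ (measurable linearization of $M$ plus Calder\'on's identity $[L^p_\omega(\ell^p),L^p_\omega(\ell^\infty)]_\theta=L^p_\omega(\ell^r)$, unproblematic since you work with finite sequences and uniform endpoint constants) are all standard; and in the range $1<r<p$ your diagnosis is accurate — the naive dualization would require $\omega\in A_{p/r}$, which $A_p$ does not give — while the vector-valued corollary of Rubio de Francia extrapolation, applied to the pairs $(|f|,Mf)$ whose scalar $A_{p_0}$-weighted bounds are uniform in the weight constant, does close the case without circularity, because extrapolation is proved from the scalar Muckenhoupt theorem and $A_p$ duality only, not from the vector-valued result. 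Three remarks. First, the extrapolation corollary already yields the estimate for all $1<r<\infty$ and $\omega\in A_p$, so your three-way case split is redundant (though harmless); conversely, the ``hard'' case is thereby outsourced to a theorem at least as deep as the one being proved, whereas the original proof in \cite{AJ1981} predates extrapolation and proceeds differently. Second, your argument (necessarily) covers only $1<r\le\infty$: at $r=1$ the sufficiency fails even for $\omega\equiv 1$, so the restriction $r>1$ implicit in \cite{AJ1981} must be read into the statement, since the surrounding text of the paper allows $1\le r<\infty$. Third, minor technical points you flag — measurable selection of the cubes $Q_k(x)$, finiteness a.e.\ of $Mf_k$ for $f_k\in L^p_\omega$ with $\omega\in A_p$, and the density/truncation step — are genuinely routine, so I see no gap.
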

%%%%%%%%%%%%%%%%%%%%%%%%%%%%%%%%%%%%%%%%%%
\begin{theorem}\label{Theo2.12-CDD2017}
{\rm(Theorem 2.12 in \cite{CDD2017})} If $1 < p < r < \infty$, then the operator $M$ is
bounded from $L^{p,1}_\omega(\ell^r, \mathbb{R}^n)$ to $L^{p,\infty}_\omega(\ell^r, \mathbb{R}^n)$ if and only if $\omega\in A(p, 1)$.
\end{theorem}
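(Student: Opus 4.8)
The statement is an equivalence whose substantive half is sufficiency; the converse I would obtain by testing. For a sequence $\vec f=\{f_k\}$ write $h=|\vec f|_r=\big(\sum_k|f_k|^r\big)^{1/r}$, so that $\|\vec f\|_{L^{p,q}_\omega(\ell^r,\mathbb R^n)}=\|h\|_{L^{p,q}_\omega(\mathbb R^n)}$. For necessity I would use one‑term sequences $\vec f=(\chi_E,0,0,\dots)$ with $E\subset Q$ a subset of a cube; then $h=\chi_E$ and the $\ell^r$‑norm of the output equals $M\chi_E$. Computing the $\omega$‑rearrangement, $\chi_E^{*}(t)=\chi_{[0,\omega(E))}(t)$, whence $\|\vec f\|_{L^{p,1}_\omega(\ell^r,\mathbb R^n)}=\omega(E)^{1/p}$; and $M\chi_E\ge|E|/|Q|$ on $Q$ gives $\omega(\{M\chi_E>\lambda\})\ge\omega(Q)$ for every $\lambda<|E|/|Q|$, so the output has $L^{p,\infty}_\omega$‑norm at least $\tfrac{|E|}{|Q|}\omega(Q)^{1/p}$. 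The assumed boundedness then reads $\tfrac{|E|}{|Q|}\le C\big(\omega(E)/\omega(Q)\big)^{1/p}$ for all cubes $Q$ and all $E\subset Q$, i.e. $\omega\in A(p,1)$ by Lemma \ref{Lem2.8-CHK1982}.

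For sufficiency assume $\omega\in A(p,1)$, and extract two ingredients. (a) \emph{$\omega\in A_q$ for every $q>p$}: applying $A(p,1)$ in the form $\omega(E)\gtrsim\omega(Q)(|E|/|Q|)^p$ to the sets $E_\mu=\{x\in Q:\omega(x)<\mu\}$, together with $\omega(E_\mu)\le\mu\,|E_\mu|$, gives $|E_\mu|\lesssim(\mu\,|Q|^p/\omega(Q))^{1/(p-1)}$; splitting $\int_Q\omega^{-t}=t\int_0^\infty\mu^{-t-1}|E_\mu|\,d\mu$ at $\mu_0\simeq\omega(Q)/|Q|$ yields $\tfrac1{|Q|}\int_Q\omega^{-t}\lesssim\big(\tfrac1{|Q|}\int_Q\omega\big)^{-t}$ for all $0<t<\tfrac1{p-1}$, which is exactly the $A_q$ condition for $t=\tfrac1{q-1}$, $q>p$. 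Hence, by Theorem \ref{Theo3.1-AJ1981}, $M$ is bounded on $L^q_\omega(\ell^r,\mathbb R^n)$ for every $q>p$. (b) \emph{The scalar endpoint $M:L^{p,1}_\omega(\mathbb R^n)\to L^{p,\infty}_\omega(\mathbb R^n)$}: dominating $M$ by a finite sum of shifted dyadic maximal operators and writing $g_1=g\chi_{\{|g|>\lambda/2\}}$, the set $\{M^{\Delta}g>\lambda\}\subset\{M^{\Delta}g_1>\lambda/2\}$ lies in a disjoint union of dyadic cubes $Q_j$ with $\tfrac1{|Q_j|}\int_{Q_j}|g_1|>\lambda/2$; the layer‑cake form of $A(p,1)$, namely $\tfrac1{|Q|}\int_Q|g|\lesssim\omega(Q)^{-1/p}\|g\chi_Q\|_{L^{p,1}_\omega}$, gives $\omega(Q_j)\lesssim\lambda^{-p}\|g_1\chi_{Q_j}\|_{L^{p,1}_\omega}^p$, and summing the $p$‑th powers over the disjoint $Q_j$ by Minkowski's integral inequality yields $\omega(\{M^{\Delta}g>\lambda\})\lesssim\lambda^{-p}\|g\|_{L^{p,1}_\omega}^p$ (cf. \cite{CHK1982}).

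Now the main argument: replace $M$ by a finite sum of shifted dyadic maximal operators, fix $\lambda>0$, and Calderón--Zygmund‑decompose $h$ at height $\lambda$ into disjoint dyadic cubes $\{Q_j\}$ with $\lambda<\tfrac1{|Q_j|}\int_{Q_j}h\le2^n\lambda$, with $h\le\lambda$ a.e. off $\Omega=\bigcup_jQ_j$, and, by (b) applied to $h$, $\omega(\Omega)\lesssim\lambda^{-p}\|h\|_{L^{p,1}_\omega}^p$. Write $\vec f=\vec g+\vec b$, $g_k=f_k$ on $\Omega^c$, $g_k=\tfrac1{|Q_j|}\int_{Q_j}f_k$ on $Q_j$. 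Minkowski's inequality gives $|\vec g|_r\le h\le\lambda$ on $\Omega^c$ and $|\vec g|_r\le\tfrac1{|Q_j|}\int_{Q_j}h\le2^n\lambda$ on $Q_j$, so $\||\vec g|_r\|_\infty\le2^n\lambda$. With $q=2p$, ingredient (a) and Chebyshev's inequality give, writing $G=|\{Mg_k\}|_r$, that $\omega(\{G>\lambda/2\})\lesssim\lambda^{-q}\int|\vec g|_r^q\omega\le\lambda^{-q}(2^n\lambda)^{q-p}\int|\vec g|_r^p\omega$, and $\int|\vec g|_r^p\omega\le\int_{\Omega^c}h^p\omega+\sum_j\big(\tfrac1{|Q_j|}\int_{Q_j}h\big)^p\omega(Q_j)\lesssim\|h\|_{L^p_\omega}^p+\sum_j\|h\chi_{Q_j}\|_{L^{p,1}_\omega}^p\lesssim\|h\|_{L^{p,1}_\omega}^p$, using the $A(p,1)$ bound on the averages of $h$, disjointness of the $Q_j$, and $\|h\|_{L^p_\omega}\le\|h\|_{L^{p,1}_\omega}$ (Corollary \ref{Cor2.3-WZC2017}); thus $\omega(\{G>\lambda/2\})\lesssim\lambda^{-p}\|h\|_{L^{p,1}_\omega}^p$, settling the good part. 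For $\vec b$, the contribution of $2\Omega=\bigcup_j2Q_j$ is harmless because $A(p,1)$ weights are doubling, so $\omega(2\Omega)\lesssim\omega(\Omega)\lesssim\lambda^{-p}\|h\|_{L^{p,1}_\omega}^p$; and on $(2\Omega)^c$ the standard off‑diagonal estimate for the maximal function gives $|\{Mb_k(x)\}|_r\lesssim\lambda\sum_j|Q_j|\big(|x-c_j|+\ell(Q_j)\big)^{-n}$, with $c_j,\ell(Q_j)$ the centre and side of $Q_j$.

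The step I expect to be hardest is then to prove $\omega\big(\{x\notin2\Omega:\sum_j|Q_j|(|x-c_j|+\ell(Q_j))^{-n}>c\}\big)\lesssim\omega(\Omega)$. The obstruction is that $A(p,1)$ guarantees only the weak dilation bound $\omega(2^mQ)\lesssim2^{mnp}\omega(Q)$, so a crude $L^1_\omega$ Chebyshev estimate of the tail diverges for heavy weights; one has to split the tail into dyadic annuli around each $Q_j$, regroup the cubes by side‑length, and control the superlevel sets of the pieces by the scalar endpoint (b) applied to the characteristic functions of the annular unions, summing the scales geometrically with the $A_q$ dilation bound ($q>p$) from (a). Making this summation close — together with checking the two Minkowski‑type interchanges used in (b) and in the good‑part estimate — is where essentially all of the technical effort lies; the necessity half, as indicated above, is immediate.
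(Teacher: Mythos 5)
First, a point of reference: the paper does not prove this statement at all --- it is quoted verbatim as Theorem 2.12 of \cite{CDD2017} --- so the closest in-paper comparison is the proof of Theorem \ref{max-Delta-2eta}, which establishes the exact analogue for $M^{\Delta}_{\varphi,2\eta}$ and displays the intended technique. Your necessity argument (testing on $\vec f=(\chi_E,0,\dots)$ and invoking Lemma \ref{Lem2.8-CHK1982}) is correct, as are your two ingredients: $A(p,1)\subset A_q$ for $q>p$ (hence the strong $L^q_\omega(\ell^r)$ bound via Theorem \ref{Theo3.1-AJ1981}), and the scalar endpoint $M:L^{p,1}_\omega\to L^{p,\infty}_\omega$, which is already Theorem 3 of \cite{CHK1982}. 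Your good-part estimate, including the Minkowski-type superadditivity $\sum_j\|h\chi_{Q_j}\|_{L^{p,1}_\omega}^p\le\|h\|_{L^{p,1}_\omega}^p$ over disjoint cubes, also goes through.

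The genuine gap is the bad part, and it is not merely a technicality you deferred: the route you chose cannot be closed as sketched. Writing $\vec b=\vec f-\vec g$ and using only the size bound $Mb_{k,j}(x)\lesssim(|x-c_j|+\ell_j)^{-n}\int_{Q_j}|b_{k,j}|$ throws away everything (the maximal function sees $|b_{k,j}|$, so the mean-zero property buys no extra decay), and you land on the Marcinkiewicz-type sum $\Phi(x)=\sum_j|Q_j|(|x-c_j|+\ell_j)^{-n}$, which already for Lebesgue measure is not integrable over $(2\Omega)^c$ and whose superlevel sets are controlled only through an exponential-decay estimate of Zygmund type, not through Chebyshev on annuli. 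Your proposed summation indeed diverges: the $m$-th annulus contributes weight $2^{-mn}$ while the $A_q$ dilation bound gives $\omega(2^mQ_j)\lesssim 2^{mnq}\omega(Q_j)$ with $q>p>1$, so the product $2^{m n(q-1)}$ grows geometrically. The correct (and much shorter) device is the Fefferman--Stein averaging lemma, which is exactly how the paper handles the corresponding step in Theorem \ref{max-Delta-2eta} (inequality \eqref{Fefferman-Stein}): for $x\notin\bigcup_j 2Q_j$, any cube $B\ni x$ meeting $Q_j$ satisfies $\ell(B)\gtrsim\ell_j$, hence $Q_j\subset CB$, hence $M(f_k\chi_\Omega)(x)\le C\,M\overline{f}_k(x)$ with $\overline{f}_k=\sum_j\big(\tfrac1{|Q_j|}\int_{Q_j}|f_k|\big)\chi_{Q_j}$. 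Since $|\overline{f}|_r\le 2^n\lambda$ by Minkowski and $\overline{f}$ is supported on $\Omega$, the strong $L^q_\omega(\ell^r)$ bound from your ingredient (a) gives $\omega(\{|\{M\overline{f}_k\}|_r>\lambda\})\lesssim\lambda^{-q}\lambda^{q}\omega(\Omega)=\omega(\Omega)\lesssim\lambda^{-p}\|h\|_{L^{p,1}_\omega}^p$, and doubling of $A(p,1)$ weights absorbs $\omega(\bigcup_j 2Q_j)$. Replacing your third paragraph's treatment of $\vec b$ by this argument (your $Mb_k$ is itself dominated by $2M\overline{f}_k$ off $2\Omega$) completes the proof; as written, the hardest step is missing and the strategy offered for it fails.
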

%%%%%%%%%%%%%%%%%%%%%%%%%%%%%%%%%%%%%%%%%%
\begin{lemma}\label{Lem2.3-Ta2012}
{\rm(Lemma 2.3 in \cite{Ta2012})} If $1 \le p < \infty$, then the operator $M_\varphi$ is bounded from $L^p_\omega(\mathbb{R}^n)$ to $L^{p,\infty}_\omega(\mathbb{R}^n)$ if and only if $\omega\in A_p(\varphi)$.
\end{lemma}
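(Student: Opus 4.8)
\emph{Proof strategy.} The plan is to run the classical Muckenhoupt argument for $M$ and $A_p$, carrying the extra factor $\varphi(|Q|)$ along and exploiting that $\varphi(ct)\le c^{\alpha_0}\varphi(t)$ for every $c\ge 1$. The one point requiring care is that $A_p(\varphi)$ weights need not be doubling; I get around this by never estimating $\omega(Q)$ in terms of itself but instead by testing the $A_p(\varphi)$ condition on a dilate $3Q$ of the cube at hand.

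\emph{Necessity.} Suppose $\|M_\varphi f\|_{L^{p,\infty}_\omega}\le C\|f\|_{L^p_\omega}$ and fix a cube $Q$. For $1<p<\infty$ I would test the inequality on $f=\omega^{-1/(p-1)}\chi_Q$ (first truncated to $\{\omega^{-1/(p-1)}\le N\}$, then letting $N\to\infty$, to avoid integrability worries). Since $M_\varphi f(x)\ge(\varphi(|Q|)|Q|)^{-1}\int_Q\omega^{-1/(p-1)}$ for all $x\in Q$, the set where $M_\varphi f$ exceeds half that number contains $Q$; inserting this into the weak-type bound, using $\|g\|_{L^{p,\infty}_\omega}\simeq\sup_{t>0}t\,\omega(\{|g|>t\})^{1/p}$ together with $\|f\|_{L^p_\omega}=(\int_Q\omega^{-1/(p-1)})^{1/p}$, and rearranging, one arrives at $\big(\frac{1}{\varphi(|Q|)|Q|}\int_Q\omega\big)\big(\frac{1}{\varphi(|Q|)|Q|}\int_Q\omega^{-1/(p-1)}\big)^{p-1}\le(2C)^p$, i.e.\ $\omega\in A_p(\varphi)$. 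For $p=1$ I would instead test on $f=\chi_E$ with $E\subset Q$, obtaining $\frac{|E|}{\varphi(|Q|)|Q|}\le 2C\frac{\omega(E)}{\omega(Q)}$, and then let $E$ shrink to a Lebesgue point of $\omega$ and take the supremum over cubes $Q\ni x$ to get $M_\varphi\omega(x)\le 2C\,\omega(x)$ a.e., i.e.\ $\omega\in A_1(\varphi)$.

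\emph{Sufficiency.} Assume $\omega\in A_p(\varphi)$. By a monotone-convergence reduction (replace $f$ by $f_k=\min(|f|,k)\chi_{B_k}$; then $M_\varphi f_k\nearrow M_\varphi f$ and the level sets increase) it suffices to estimate $\omega(\{M_\varphi f>\lambda\})$ for $f$ bounded with compact support, so that $f\in L^1(dx)$. For each $x$ in $\Omega_\lambda:=\{M_\varphi f>\lambda\}$ pick a cube $Q_x\ni x$ with $\int_{Q_x}|f|>\lambda\,\varphi(|Q_x|)|Q_x|$; since $\varphi\ge1$ this forces $|Q_x|<\|f\|_{L^1(dx)}/\lambda$, so the cubes are uniformly bounded in size and a Vitali covering lemma for cubes yields a countable disjoint subfamily $\{Q_j\}$ with $\Omega_\lambda\subset\bigcup_j 3Q_j$. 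For $1<p<\infty$, Hölder gives $\varphi(|Q_j|)|Q_j|<\lambda^{-1}\big(\int_{Q_j}|f|^p\omega\big)^{1/p}\big(\int_{Q_j}\omega^{-1/(p-1)}\big)^{1/p'}$, while applying the $A_p(\varphi)$ condition at the cube $3Q_j$ — using $\varphi(3^n|Q_j|)\le 3^{n\alpha_0}\varphi(|Q_j|)$ and $\int_{Q_j}\omega^{-1/(p-1)}\le\int_{3Q_j}\omega^{-1/(p-1)}$ — gives $\omega(3Q_j)\big(\int_{Q_j}\omega^{-1/(p-1)}\big)^{p-1}\lesssim(\varphi(|Q_j|)|Q_j|)^p$. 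Combining these, the factors $\int_{Q_j}\omega^{-1/(p-1)}$ cancel and $\omega(3Q_j)\lesssim\lambda^{-p}\int_{Q_j}|f|^p\omega$; summing over the disjoint $Q_j$ yields $\omega(\Omega_\lambda)\le\sum_j\omega(3Q_j)\lesssim\lambda^{-p}\|f\|_{L^p_\omega}^p$, the desired weak-type estimate. For $p=1$ the same covering works, but one uses $A_1(\varphi)$ directly: evaluating $M_\varphi\omega$ at a point of $Q_j\subset 3Q_j$ gives $\omega(3Q_j)\lesssim\varphi(|Q_j|)|Q_j|\,\operatorname{ess\,inf}_{Q_j}\omega<\lambda^{-1}\big(\operatorname{ess\,inf}_{Q_j}\omega\big)\int_{Q_j}|f|\le\lambda^{-1}\int_{Q_j}|f|\,\omega$, and summing over the disjoint $Q_j$ gives $\omega(\Omega_\lambda)\lesssim\lambda^{-1}\|f\|_{L^1_\omega}$.

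\emph{Main obstacle and an alternative route.} I expect the only real subtlety to be the one just flagged, namely the possible failure of doubling for $\omega$; the argument sidesteps it by bounding the enlarged set $\omega(3Q_j)$ directly via $A_p(\varphi)$ at scale $3Q_j$ and the near-dilation-invariance of $\varphi$, while disjointness is retained among the smaller cubes $Q_j$. Equivalently, and perhaps more transparently, one may first prove the pointwise domination $M_\varphi f(x)\lesssim\big(M_\omega(|f|^p)(x)\big)^{1/p}$ for $\omega\in A_p(\varphi)$ (the enlargement $5B$ built into $M_\omega$ playing exactly the role of the dilation $3Q_j$ above), and then observe that $M_\omega$ is of weak type $(1,1)$ with respect to $\omega\,dx$ by the Vitali covering lemma — again no doubling is needed, precisely because of the $5B$ — whence $\|M_\varphi f\|_{L^{p,\infty}_\omega}\lesssim\|M_\omega(|f|^p)\|_{L^{1,\infty}_\omega}^{1/p}\lesssim\|f\|_{L^p_\omega}$. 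All remaining ingredients — the truncation/monotone-convergence reduction, the routine passage between cubes and balls inside $A_p(\varphi)$, and the elementary characterization $\|g\|_{L^{p,\infty}_\omega}\simeq\sup_{t>0}t\,\omega(\{|g|>t\})^{1/p}$ — are standard.
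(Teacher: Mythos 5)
The paper never proves this statement: it is imported verbatim as Lemma 2.3 of Tang \cite{Ta2012}, so there is no in-paper argument to measure you against; your proposal has to stand on its own, and it does. Both directions are correct: the necessity part is the standard testing argument (with the truncations you indicate to avoid integrability issues), and the sufficiency part correctly handles the genuine obstacle, the failure of doubling for $A_p(\varphi)$ weights, by keeping the selected cubes $Q_j$ disjoint while estimating $\omega$ of the dilated cubes directly from the $A_p(\varphi)$ condition applied at the dilate, using $\varphi(c|Q|)\le c^{\alpha_0}\varphi(|Q|)$. Your ``alternative route'' via the pointwise bound $M_\varphi f\lesssim\big(M_\omega(|f|^p)\big)^{1/p}$ together with the weak $(1,1)$ bound for the non-doubling maximal operator $M_\omega$ (normalized by $\omega(5B)$) is in fact essentially Tang's own argument (his Lemma 2.1), and the present paper records exactly the dyadic analogue of that pointwise bound just before Lemma \ref{Lemma-dyadic}, so either of your two routes is consistent with the source being quoted. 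One cosmetic repair: for a family of cubes of uniformly bounded size, the Vitali-type selection only guarantees $\Omega_\lambda\subset\bigcup_j cQ_j$ with $c=5$ (or $3+\varepsilon$), not $c=3$; this changes nothing, since the dilation constant only enters through $\varphi(c^n|Q_j|)\le c^{n\alpha_0}\varphi(|Q_j|)$ and hence only affects the implicit constant. Similarly, in the cancellation step one should note that $0<\int_{Q_j}\omega^{-1/(p-1)}<\infty$ (finiteness follows from the $A_p(\varphi)$ condition once $\omega(Q_j)>0$), which is the usual routine caveat.
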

%%%%%%%%%%%%%%%%%%%%%%%%%%%%%%%%%%%%%%%%%%
In 1981, Andersen and John \cite{AJ1981} established the weighted norm inequalities for vector-valued maximal functions and maximal singular integrals on the space $L^p_\omega(\ell^r,\mathbb R^n)$. Now, let us recall the definition of maximal singular integrals associated to the kernels due to Andersen and John. For more details, see in the work \cite{AJ1981}.

\begin{definition}
Let $K$ be the kernel such that
\begin{align}
|K(x)|&\le \frac{A}{|x|^n}, |\hat{K}(x)|\le A;\\
|K(x-y)-K(x)|&\le \mu (|y|/|x|)|x|^{-n}, \text{for all} ~|x|\ge 2|y|;
\end{align}
where $A$ is a constant and $\mu$ is non-decreasing on the positive real half-line,
$\mu(2t)\le C\mu (t)$ for all $t > 0$, and satisfies the Dini condition
\begin{align}
\int_0^1\frac{\mu(t)}{t}dt<\infty.
\end{align}
Then, the maximal singular integral operator $T^*$ is defined	by
\begin{align*}
T^*(f)(x)= \mathop{\rm sup}\limits_{\varepsilon >0}\Big|\int_{\,\,|x-y|\geq \varepsilon}K(x-y)f(y)dy\Big|.
\end{align*}
\end{definition}
 
If $\{K_k (x)\}$ denote a sequence of singular convolution kernels satisfying the above conditions (2.3)-(2.5) with a uniform constant $A$ and a fixed function $\mu$ not dependent of $k$, then we write $T^*(\vec{f})=\{T^*_k(f_k) \}$, where $T^*_k$  is the operator above corresponding to the kernel $K_k$ .
%%%%%%%%%%%%%%%%%%%%%%%%%%%%%%%%%%%%%%%%%%%
\begin{theorem}\label{Theo-AJ1981}
{\rm(Theorem 5.2 in \cite{AJ1981})} Let $1 < r < \infty, 1 < p < \infty,$ and suppose $\omega\in A_p$. There exits a constant $C$ such that
\begin{align*}
\|T^*(\vec{f}) \|_{L^p_\omega(\ell^r,\mathbb{R}^n)}\le C\|\vec{f} \|_{L^p_\omega(\ell^r,\mathbb{R}^n)}, \text{ for all} ~f\in L^p_\omega(\ell^r,\mathbb{R}^n). 
\end{align*}
\end{theorem}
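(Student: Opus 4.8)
The plan is to reduce the maximal operator $T^*$ to the Hardy--Littlewood maximal operator $M$ and to the non-maximal singular integrals $T_k$ by means of Cotlar's inequality, to absorb the $M$-parts using the vector-valued weighted maximal inequality of Theorem~\ref{Theo3.1-AJ1981}, and to treat the singular-integral part by the Coifman--Fefferman good-$\lambda$ method, taking advantage of the fact that $\omega\in A_p\subset A_\infty$. A preliminary reduction: by a routine truncation it suffices to prove the inequality when only finitely many $f_k$ are nonzero, each belonging to $C^\infty_c(\mathbb R^n)$, so that every norm occurring below is a priori finite; the general case then follows by monotone convergence. Moreover, since $\omega\in A_\infty$ there is $s>1$ with $\omega\in RH_s(\mathbb R^n)$, so Proposition~\ref{rever-Holder} provides constants $C,\theta>0$ with $\omega(E)\le C(|E|/|Q|)^{\theta}\,\omega(Q)$ for every cube $Q$ and measurable $E\subset Q$; this is the device that converts Lebesgue-measure bounds into $\omega$-bounds. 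Throughout, for a sequence $\vec g=\{g_k\}$ I write $\vec{Tg}:=\{T_kg_k\}$ for the corresponding sequence of (non-maximal) singular-integral images.

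First I would record Cotlar's inequality, uniformly in $k$. Fix $\delta\in(0,1)$; since each $K_k$ satisfies (2.3)--(2.5) with the \emph{same} constant $A$ and the \emph{same} Dini function $\mu$, and $|\widehat{K_k}|\le A$ makes each $T_k$ bounded on $L^2(\mathbb R^n)$ with a bound depending only on $A$, the classical argument (see e.g.\ \cite{St1993}) gives a constant $C=C(n,A,\mu,\delta)$, independent of $k$, with
\[
T^*_k g(x)\le C\Bigl(\bigl(M(|T_kg|^{\delta})(x)\bigr)^{1/\delta}+Mg(x)\Bigr),\qquad x\in\mathbb R^n,\ g\in C^\infty_c(\mathbb R^n).
\]
Taking $\ell^r$-norms in $k$ and then $\|\cdot\|_{L^p_\omega(\ell^r,\mathbb R^n)}$, the contribution of $Mf_k$ is bounded by $C\|\vec f\|_{L^p_\omega(\ell^r,\mathbb R^n)}$ directly by Theorem~\ref{Theo3.1-AJ1981} (valid since $\omega\in A_p$), while the contribution of $M(|T_kf_k|^{\delta})$, after the routine reindexing that replaces the pair $(\ell^r,L^p_\omega)$ by $(\ell^{r/\delta},L^{p/\delta}_\omega)$ and raises to the power $1/\delta$, is bounded by a constant times $\|\vec{Tf}\|_{L^p_\omega(\ell^r,\mathbb R^n)}$ — here Theorem~\ref{Theo3.1-AJ1981} is applied at exponent $p/\delta$ and vector index $r/\delta>1$, which is legitimate because $p/\delta>p$ gives $\omega\in A_p\subset A_{p/\delta}$. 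Hence
\[
\|T^*(\vec f)\|_{L^p_\omega(\ell^r,\mathbb R^n)}\le C\,\|\vec{Tf}\|_{L^p_\omega(\ell^r,\mathbb R^n)}+C\,\|\vec f\|_{L^p_\omega(\ell^r,\mathbb R^n)},
\]
and everything reduces to the weighted $\ell^r$-valued bound $\|\vec{Tf}\|_{L^p_\omega(\ell^r,\mathbb R^n)}\le C\|\vec f\|_{L^p_\omega(\ell^r,\mathbb R^n)}$ for the non-maximal family.

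For this reduced inequality I would carry out the Coifman--Fefferman good-$\lambda$ argument \cite{CF1974} in its $\ell^r$-valued form. It rests on two inputs, both with constants uniform in $k$: (i) the vector weak-$(1,1)$ bound $\bigl|\{x:|\vec{Tg}(x)|_r>\lambda\}\bigr|\le C\lambda^{-1}\|\,|\vec g|_r\,\|_{L^1(\mathbb R^n)}$, obtained from the Calder\'on--Zygmund decomposition of the \emph{scalar} function $|\vec g|_r$ at height $\lambda$, controlling the ``good part'' in $L^2(\ell^r,\mathbb R^n)$ via Plancherel and $|\widehat{K_k}|\le A$, and the ``bad part'' via the cancellation in the decomposition together with the smoothness condition (2.4) and the Dini integrability of $\mu$ (Minkowski's inequality in $\ell^r$ being used throughout); and (ii) the Hörmander-type regularity $|K_k(x-z)-K_k(y-z)|\le\mu(|x-y|/|x-z|)\,|x-z|^{-n}$, valid for all $k$ with the \emph{same} $\mu$, which after an $\ell^r$-Minkowski step controls the oscillation of the far part of $\vec{Tf}$ on a cube by $M(|\vec f|_r)$ there. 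Inserting (i) and (ii) into the usual Whitney-decomposition and exit-point scheme yields the unweighted good-$\lambda$ inequality: there is $C>0$ such that for all $0<\gamma<1$ and $\lambda>0$,
\[
\bigl|\{x:|\vec{Tf}(x)|_r>2\lambda,\ M(|\vec f|_r)(x)\le\gamma\lambda\}\bigr|\le C\gamma\,\bigl|\{x:|\vec{Tf}(x)|_r>\lambda\}\bigr|,
\]
and, applying the reverse-H\"older inequality $\omega(E)\le C(|E|/|Q|)^{\theta}\omega(Q)$ to the part of this set inside each Whitney cube $Q$ of the open set $\{|\vec{Tf}|_r>\lambda\}$, one obtains the weighted analogue with $C\gamma^{\theta}$ in place of $C\gamma$.

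Finally, multiplying the weighted good-$\lambda$ inequality by $p\lambda^{p-1}$, integrating over $\lambda\in(0,\infty)$, and choosing $\gamma$ small, the resulting term $C\gamma^{\theta}\|\vec{Tf}\|_{L^p_\omega(\ell^r,\mathbb R^n)}^p$ is absorbed into the left-hand side — this is exactly where the a priori finiteness arranged at the start is needed — leaving $\|\vec{Tf}\|_{L^p_\omega(\ell^r,\mathbb R^n)}\le C\,\|M(|\vec f|_r)\|_{L^p_\omega(\mathbb R^n)}$. Since $\omega\in A_p$, Muckenhoupt's weighted maximal theorem \cite{Mu1972} applied to the scalar function $|\vec f|_r$ (equivalently, the one-term case of Theorem~\ref{Theo3.1-AJ1981}) bounds this by $C\|\,|\vec f|_r\,\|_{L^p_\omega(\mathbb R^n)}=C\|\vec f\|_{L^p_\omega(\ell^r,\mathbb R^n)}$; combined with the reduction of the second paragraph and then undoing the truncation, this proves the theorem. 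The main obstacle I expect is the careful bookkeeping of uniformity in $k$: Cotlar's inequality, the $L^2(\ell^r)$ estimate feeding the good part of the vector weak-$(1,1)$ bound, and the Hörmander--Dini tail estimates must each have a constant depending only on $n$, $A$ and $\mu$ — which is precisely what hypotheses (2.3)--(2.5) with a common constant $A$ and a common function $\mu$ guarantee — while the $\ell^r$-structure enters only through Minkowski's inequality together with the scalar good-$\lambda$ machinery, and the weight enters only through $\omega\in A_\infty$ (reverse H\"older, Proposition~\ref{rever-Holder}) and the scalar Muckenhoupt theorem.
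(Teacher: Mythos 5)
The paper does not actually prove Theorem \ref{Theo-AJ1981}: it is imported verbatim from Andersen and John (Theorem 5.2 in \cite{AJ1981}), so there is no internal proof to compare against. Judged on its own, your plan is essentially the classical weighted vector-valued Calder\'on--Zygmund argument and would yield the theorem. It differs mildly from the cited source: Andersen and John run a good-$\lambda$ (distribution-function) inequality directly between $|T^*(\vec f)|_r$ and the maximal function, in the spirit of Coifman--Fefferman \cite{CF1974}, and then invoke their weighted vector maximal theorem, whereas you first insert Cotlar's inequality (uniform in $k$, which conditions (2.3)--(2.5) with common $A$ and $\mu$ do guarantee) to pass from $T^*_k$ to $T_k$, and only then run good-$\lambda$ for the non-maximal family. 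Your route costs one extra pointwise lemma but lets you quote Theorem \ref{Theo3.1-AJ1981} twice (once at exponent $p/\delta$ and index $r/\delta$, which is legitimate since $A_p\subset A_{p/\delta}$) and keeps the good-$\lambda$ step in its most familiar scalar-looking form; both routes are sound.

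One step is wrong as literally written: in the vector weak-$(1,1)$ estimate you propose to control the good part of the Calder\'on--Zygmund decomposition ``in $L^2(\ell^r,\mathbb R^n)$ via Plancherel and $|\widehat{K_k}|\le A$''. Plancherel gives $L^2(\ell^2)$, not $L^2(\ell^r)$, so for $r\neq 2$ this does not apply. The standard repair (the Benedek--Calder\'on--Panzone scheme \cite{BP1961}) is to start from $L^r(\ell^r)$ instead: the unweighted scalar Calder\'on--Zygmund theorem gives $\|T_kg_k\|_{L^r}\le C\|g_k\|_{L^r}$ with $C=C(n,A,\mu,r)$ independent of $k$, and raising to the power $r$ and summing in $k$ yields $\|\{T_kg_k\}\|_{L^r(\ell^r)}\le C\|\vec g\|_{L^r(\ell^r)}$, which is exactly the a priori bound needed for the good part (Chebyshev at exponent $r$), the treatment of the bad part via (2.4)--(2.5) being unchanged. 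You should also say a word on why $\|\vec{Tf}\|_{L^p_\omega(\ell^r,\mathbb R^n)}$ and $\|T^*(\vec f)\|_{L^p_\omega(\ell^r,\mathbb R^n)}$ are a priori finite for your truncated data, since the absorption at the end hinges on it: for finitely many $f_k\in C^\infty_c(\mathbb R^n)$ one has $T^*_kf_k(x)+|T_kf_k(x)|\lesssim (1+|x|)^{-n}$, and this lies in $L^p_\omega(\mathbb R^n)$ because $\omega\in A_{p-\varepsilon}$ for some $\varepsilon>0$ (openness of the $A_p$ classes), not merely because $\omega\in A_p$. With these two repairs the argument goes through.
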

%%%%%%%%%%%%%%%%%%%%%%%%%%%%%%%
%%%%%%%%%%%%%%%%%%%%%%%%%%%%%%%%%%%
Let $b$ be a measurable function. We denote by $\mathcal{M}_b$ the multiplication operator defined  by $\mathcal{M}_bf (x)=b(x) f (x)$ for any measurable function $f$. If $\mathcal{H}$ is a linear or sublinear operator on some measurable function space, the commutator of Coifman-Rochberg-Weiss type formed by $\mathcal{M}_b$  and $\mathcal{H}$ is defined by $[\mathcal{M}_b, \mathcal{H}]f (x)=(\mathcal{M}_b\mathcal{H}-\mathcal{H}\mathcal{M}_b) f (x)$. 
\section{The results about the boundedness of maximal operators}\label{section3}
%%%%%%%%%%%%%%%%%%%%%%%%%%%%%%%%%%%%%%
By using Theorem \ref{Theo3.1-AJ1981} and estimating as Theorem 1.1 in \cite{WZC2017}, we immediately have the following useful characterization for the Muckenhoupt weights through boundedness of the Hardy-Littlewood maximal operators on the vector valued function spaces.

\begin{theorem}
Let $1 < q < p < \infty, 1 \le r < \infty$. Then, the following statements are equivalent:
\begin{enumerate}
\item[(1)]	$\omega\in A_p$;
\item[(2)]	$M$  is a bounded operator from $L^p_\omega(\ell^r,\mathbb{R}^n)$ to $L^{p,\infty}_\omega(\ell^r,\mathbb{R}^n)$;
\item[(3)]	$M$  is a bounded operator from $L^p_\omega(\ell^r,\mathbb{R}^n)$ to $M^{p}_{q,\omega}(\ell^r,\mathbb{R}^n)$;
\item[(4)]	$M$  is a bounded operator from $L^p_\omega(\ell^r,\mathbb{R}^n)$ to $WM^{p}_{q,\omega}(\ell^r,\mathbb{R}^n)$.
\end{enumerate}
\end{theorem}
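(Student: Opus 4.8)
\emph{Proof strategy.} The plan is to run the cycle of implications $(1)\Rightarrow(2)\Rightarrow(3)\Rightarrow(4)\Rightarrow(1)$. For $(1)\Rightarrow(2)$, assume $\omega\in A_p$. By Theorem \ref{Theo3.1-AJ1981}, $M$ is bounded on $L^p_\omega(\ell^r,\mathbb R^n)$, i.e. $\big\| |M\vec f(\cdot)|_r \big\|_{L^p_\omega(\mathbb R^n)}\le C\big\| |\vec f(\cdot)|_r \big\|_{L^p_\omega(\mathbb R^n)}$; since $\|h\|_{L^{p,\infty}_\omega(\mathbb R^n)}\le\|h\|_{L^p_\omega(\mathbb R^n)}$ for every scalar measurable $h$ (by Chebyshev's inequality, or as a special case of Corollary \ref{Cor2.3-WZC2017}), applying this with $h=|M\vec f(\cdot)|_r$ and recalling $L^{p,\infty}_\omega(\ell^r,\mathbb R^n)=WM^{p}_{p,\omega}(\ell^r,\mathbb R^n)$ yields statement $(2)$. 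For $(2)\Rightarrow(3)$ and $(3)\Rightarrow(4)$ it suffices to invoke Corollary \ref{Cor2.3-WZC2017}, applied again to $h=|M\vec f(\cdot)|_r$: it gives $\|h\|_{M^p_{q,\omega}(\mathbb R^n)}\le\|h\|_{WM^p_{p,\omega}(\mathbb R^n)}=\|h\|_{L^{p,\infty}_\omega(\mathbb R^n)}$ and $\|h\|_{WM^p_{q,\omega}(\mathbb R^n)}\le\|h\|_{M^p_{q,\omega}(\mathbb R^n)}$, so boundedness of $M$ into the larger space is automatically inherited by the smaller ones.

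The substance of the statement is the implication $(4)\Rightarrow(1)$, which I would prove by a test-function argument, estimating as in Theorem 1.1 of \cite{WZC2017}. First reduce to the scalar case: for $\vec f=(f,0,0,\dots)$ one has $|\vec f(\cdot)|_r=|f|$ and $|M\vec f(\cdot)|_r=Mf$, so $(4)$ forces the scalar bound $\|Mf\|_{WM^{p}_{q,\omega}(\mathbb R^n)}\le C\|f\|_{L^p_\omega(\mathbb R^n)}$. Now fix an arbitrary cube $Q$, put $\sigma=\omega^{-1/(p-1)}$, and in order to avoid any a priori integrability assumption on $\sigma$ apply the last inequality to the truncation $f_N=\min(\sigma,N)\chi_Q$. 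For $x\in Q$ one has $Mf_N(x)\ge m_N:=\frac{1}{|Q|}\int_Q\min(\sigma,N)\,dy$, hence $\{x\in Q:Mf_N(x)>\lambda\}=Q$ for all $0<\lambda<m_N$; restricting the two suprema in the definition of $\|\cdot\|_{WM^p_{q,\omega}}$ to this cube and to such $\lambda$ and letting $\lambda\uparrow m_N$, the factor $\omega(Q)^{-(1/q-1/p)}$ combines with $\omega(Q)^{1/q}$ to give the lower bound $\|Mf_N\|_{WM^p_{q,\omega}(\mathbb R^n)}\ge m_N\,\omega(Q)^{1/p}$. On the other side, $\min(\sigma,N)^{p-1}\le\sigma^{p-1}=\omega^{-1}$ gives $\|f_N\|_{L^p_\omega(\mathbb R^n)}^p=\int_Q\min(\sigma,N)^p\,\omega(y)\,dy\le\int_Q\min(\sigma,N)\,dy=|Q|\,m_N$. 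Combining the two estimates, cancelling the common factor $m_N^{1/p}$, raising to the power $p$, and letting $N\to\infty$ via monotone convergence yields
\begin{align*}
\Big(\frac{1}{|Q|}\int_Q\omega(x)\,dx\Big)\Big(\frac{1}{|Q|}\int_Q\omega(x)^{-\frac{1}{p-1}}\,dx\Big)^{p-1}\le C,
\end{align*}
with $C$ independent of $Q$; that is, $\omega\in A_p$.

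The only delicate point I anticipate is $(4)\Rightarrow(1)$: one has to arrange the exponents so that the normalizing weight $\omega(Q)^{1/q-1/p}$ built into the weak Morrey norm is exactly cancelled by the powers of $\omega(Q)$ produced by the test function, leaving precisely the $A_p$ balance $\frac1p+\frac1{p'}=1$, and one must carry the truncation $f_N$ through carefully so that the conclusion holds even without assuming that $\omega^{-1/(p-1)}$ is locally integrable in advance (the resulting inequality shows, a posteriori, that it is). The remaining implications reduce to a single application of Theorem \ref{Theo3.1-AJ1981} together with the elementary embeddings of Corollary \ref{Cor2.3-WZC2017}.
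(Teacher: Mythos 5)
Your proposal is correct and follows essentially the same route as the paper, which only sketches this theorem by citing Theorem \ref{Theo3.1-AJ1981} together with ``estimating as Theorem 1.1 in \cite{WZC2017}'': the known $A_p$ equivalence handles $(1)\Leftrightarrow(2)$, the embeddings of Corollary \ref{Cor2.3-WZC2017} give $(2)\Rightarrow(3)\Rightarrow(4)$, and a test-function argument on a fixed cube yields $(4)\Rightarrow(1)$. Your truncation $f_N=\min(\omega^{-1/(p-1)},N)\chi_Q$ with monotone convergence is just a harmless variant of the regularization $f_\varepsilon=(\omega+\varepsilon)^{1-p'}\chi_Q$ the authors use in their analogous proofs (e.g.\ Theorem \ref{max-phi}), so no substantive difference.
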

%%%%%%%%%%%%%%%%%%%%%%%%%%%%%%%%%%%%%%%
Now, we give a new characterization for the class of $A(p,1)$ weights.
\begin{theorem}
Let $1 < q  < p  < r  < \infty$.	The following statements are equivalent:
\begin{enumerate}
\item[(1)]	$\omega\in A(p,1)$;
\item[(2)]	$M$  is a bounded operator from $L^{p,1}_\omega(\ell^r,\mathbb{R}^n)$ to $L^{p,\infty}_\omega(\ell^r,\mathbb{R}^n)$;
\item[(3)]	$M$  is a bounded operator from $L^{p,1}_\omega(\ell^r,\mathbb{R}^n)$ to $M^{p}_{q,\omega}(\ell^r,\mathbb{R}^n)$;
\item[(4)]	$M$  is a bounded operator from $L^{p,1}_\omega(\ell^r,\mathbb{R}^n)$ to $WM^{p}_{q,\omega}(\ell^r,\mathbb{R}^n)$.
\end{enumerate}
\end{theorem}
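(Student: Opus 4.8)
The plan is to prove the chain of implications $(1)\Rightarrow(2)\Rightarrow(3)\Rightarrow(4)\Rightarrow(1)$. The equivalence $(1)\Leftrightarrow(2)$ is exactly the content of Theorem \ref{Theo2.12-CDD2017} (the hypothesis $1<q<p<r<\infty$ in particular gives $1<p<r<\infty$, as required there), so $(1)\Rightarrow(2)$ needs no further work. The implications $(2)\Rightarrow(3)$ and $(3)\Rightarrow(4)$ follow for free from the norm comparisons in Corollary \ref{Cor2.3-WZC2017}: applying that corollary to the scalar function $|\vec{g}(\cdot)|_r$ and recalling that $\|\vec{g}\|_{X(\ell^r,\mathbb R^n)}=\||\vec{g}(\cdot)|_r\|_X$, one obtains, since $q<p$,
\[
\|\vec{g}\|_{WM^p_{q,\omega}(\ell^r,\mathbb R^n)}\le\|\vec{g}\|_{M^p_{q,\omega}(\ell^r,\mathbb R^n)}\le\|\vec{g}\|_{L^{p,\infty}_\omega(\ell^r,\mathbb R^n)}.
\]
Hence boundedness of $M$ from $L^{p,1}_\omega(\ell^r,\mathbb R^n)$ into $L^{p,\infty}_\omega(\ell^r,\mathbb R^n)$ forces boundedness into $M^p_{q,\omega}(\ell^r,\mathbb R^n)$, which in turn forces boundedness into $WM^p_{q,\omega}(\ell^r,\mathbb R^n)$.

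The only implication demanding a genuine argument — and the one I expect to be the main obstacle — is $(4)\Rightarrow(1)$. I would test the assumed inequality on indicator functions carried by a single coordinate of $\ell^r$. Fix a cube $Q$ and a measurable set $E\subset Q$, and put $\vec{f}=(\chi_E,0,0,\dots)$, so that $|\vec{f}(\cdot)|_r=\chi_E$. Computing the decreasing rearrangement with respect to $\omega(x)dx$ gives $\chi_E^*(t)=\chi_{[0,\omega(E))}(t)$, whence $\|\vec{f}\|_{L^{p,1}_\omega(\ell^r,\mathbb R^n)}=\|\chi_E\|_{L^{p,1}_\omega(\mathbb R^n)}=\omega(E)^{1/p}$. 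On the other side $|M(\vec{f})(\cdot)|_r=M(\chi_E)$, and since $E\subset Q$ we have $M(\chi_E)(x)\ge|E|/|Q|$ for every $x\in Q$; thus for each $\lambda<|E|/|Q|$ the set $\{x\in Q:M(\chi_E)(x)>\lambda\}$ equals $Q$. Taking this particular cube $Q$ and letting $\lambda\uparrow|E|/|Q|$ in the definition of $\|\cdot\|_{WM^p_{q,\omega}}$ gives
\[
\|M(\vec{f})\|_{WM^p_{q,\omega}(\ell^r,\mathbb R^n)}\ge\frac{1}{\omega(Q)^{\frac1q-\frac1p}}\cdot\frac{|E|}{|Q|}\,\omega(Q)^{\frac1q}=\frac{|E|}{|Q|}\,\omega(Q)^{\frac1p}.
\]
Combining the last two displays with hypothesis $(4)$ yields $\frac{|E|}{|Q|}\,\omega(Q)^{1/p}\le C\,\omega(E)^{1/p}$, that is, $\frac{|E|}{|Q|}\le C\big(\frac{\omega(E)}{\omega(Q)}\big)^{1/p}$ for all cubes $Q$ and all $E\subset Q$; by Lemma \ref{Lem2.8-CHK1982} this is precisely the statement $\omega\in A(p,1)$.

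The points that must be handled with care are the exact identity $\|\chi_E\|_{L^{p,1}_\omega(\mathbb R^n)}=\omega(E)^{1/p}$ (which reduces to integrating $t^{1/p-1}$ over $[0,\omega(E))$) and the elementary bound $M(\chi_E)(x)\ge|E|/|Q|$ on $Q$; beyond these, the proof is merely bookkeeping of the constants from Corollary \ref{Cor2.3-WZC2017} and Theorem \ref{Theo2.12-CDD2017}. One could alternatively close the cycle via $(4)\Rightarrow(2)$ by the same test-function computation together with the identity $\|\cdot\|_{L^{p,\infty}_\omega}=\|\cdot\|_{WM^p_{p,\omega}}$ from Corollary \ref{Cor2.3-WZC2017}, but routing through the $A(p,1)$ condition keeps the logical structure parallel to that of the previous theorem.
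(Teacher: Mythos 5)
Your proposal is correct, and the overall architecture (use Theorem \ref{Theo2.12-CDD2017} for $(1)\Leftrightarrow(2)$, Corollary \ref{Cor2.3-WZC2017} for $(2)\Rightarrow(3)\Rightarrow(4)$) coincides with the paper's; the difference lies entirely in the key implication $(4)\Rightarrow(1)$. The paper closes the loop by duality: using the relation \eqref{Lor-ineq} it selects a near-extremal $f$ with $\|f\|_{L^{p,1}_\omega(\mathbb R^n)}\le 1$ and $\int_Q|f|\gtrsim\|\chi_Q\omega^{-1}\|_{L^{p',\infty}_\omega(\mathbb R^n)}$, takes $\lambda=\frac{1}{2|Q|}\int_Q|f|$ so that $Q=\{x\in Q: M(f)(x)>\lambda\}$, and feeds this into hypothesis $(4)$ to obtain $\lambda\,\omega(Q)^{1/p}\lesssim 1$, which (together with $\|\chi_Q\|_{L^{p,1}_\omega(\mathbb R^n)}\simeq\omega(Q)^{1/p}$) yields the defining inequality $\|\chi_Q\|_{L^{p,1}_\omega(\mathbb R^n)}\|\chi_Q\omega^{-1}\|_{L^{p',\infty}_\omega(\mathbb R^n)}\lesssim|Q|$ of $A(p,1)$. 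You instead test $(4)$ on indicators $\chi_E$, $E\subset Q$, placed in one coordinate of $\ell^r$, compute $\|\chi_E\|_{L^{p,1}_\omega(\mathbb R^n)}=\omega(E)^{1/p}$ and the lower bound $M(\chi_E)\ge|E|/|Q|$ on $Q$, and arrive at $\frac{|E|}{|Q|}\le C\bigl(\frac{\omega(E)}{\omega(Q)}\bigr)^{1/p}$, concluding via the equivalence in Lemma \ref{Lem2.8-CHK1982}. Both arguments are sound under the paper's standing assumptions (local integrability of $\omega$ makes the test functions admissible): the paper's route verifies the $A(p,1)$ definition directly and mirrors the scalar argument of \cite{WZC2017}, while yours avoids the duality relation \eqref{Lor-ineq} altogether at the price of invoking Lemma \ref{Lem2.8-CHK1982} as a characterization; your alternative remark that one could instead prove $(4)\Rightarrow(2)$ is also consistent but unnecessary. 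The computations you flag as the delicate points (the rearrangement integral giving $\omega(E)^{1/p}$ and the lower bound for the weak Morrey norm as $\lambda\uparrow|E|/|Q|$) are exactly right as stated.
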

%%%%%%%%%%%%%%%%%%%%%%%%%%%%%
\begin{proof}
Note that Theorem \ref{Theo2.12-CDD2017} follows us to obtain the equivalence of (1) and (2). By Corollary \ref{Cor2.3-WZC2017}, we immediately have (2) $\Rightarrow$ (3) $\Rightarrow$ (4). Therefore, to complete the proof of the theorem, we need to prove (4) $\Rightarrow$ (1). For any cube $Q$, by the relation (\ref{Lor-ineq}), we find a function $f$ such that $\|f\|_{L^{p,1}_\omega(\mathbb{R}^n)}\le 1$ and
\begin{align}\label{ineq-Ap1}
\int_Q|f(x)|dx\ge \left|\,\int_{\mathbb{R}^n}f(x)\chi_Q\omega^{-1}\omega dx  \right|\gtrsim \|\chi_Q\omega^{-1} \|_{L^{p',\infty}_\omega(\mathbb{R}^n)}.
\end{align}
It is obvious that $Q=\{x\in Q: M(f)(x)>\lambda \}$, where $\lambda=\frac{1}{2|Q|}\int_Q|f(x)|dx$. Thus, because $M$ is a bounded operator from $L^{p,1}_\omega(\mathbb{R}^n)$ to $WM^{p}_{q,\omega}(\mathbb{R}^n)$, we have
\begin{align*}
\lambda\omega(Q)^{\frac{1}{p}}&=\frac{1}{\omega(Q)^{\frac{1}{q}-\frac{1}{p}}}\lambda\Big(\int_{\{x\in Q: M(f)(x)>\lambda \}}\omega(x)dx \Big)^{\frac{1}{q}}\\
&\le \|M(f) \|_{WM^p_{q,\omega}(\mathbb{R}^n)}\le \|f\|_{L^{p,1}_\omega(\mathbb{R}^n)}\le 1.
\end{align*}
As a consequence, by (\ref{ineq-Ap1}), we give
\begin{align*}
\frac{1}{2|Q|}\|\chi_Q\omega^{-1} \|_{L^{p',\infty}_\omega(\mathbb{R}^n)}.\omega(Q)^{\frac{1}{p}}\lesssim 1.
\end{align*}
From this, we have
\begin{align*}
\|\chi_Q\|_{L^{p,1}_\omega(\mathbb{R}^n)}\|\chi_Q\omega^{-1} \|_{L^{p',\infty}_\omega(\mathbb{R}^n)}\lesssim |Q|.
\end{align*}
This implies that $\omega\in A(p,1)$, and  the theorem is completely proved.
\end{proof}
%%%%%%%%%%%%%%%%%%%%%%%%%%%%%%%%%%%%%
Next, we establish the boundedness results for pseudo-differential operators of order $0$ on weighted Lorentz spaces.
For $m\in\mathbb R$, we say that the function $a(x,\xi)\in C^{\infty}(\mathbb R^n\times \mathbb R^n)$ is a
symbol of order $m$ if it satisfies the following inequality
$$
|\partial^{\beta}_x\partial^{\alpha}_\xi a(x,\xi)|\leq C_{\alpha,\beta}(1+|\xi|)^{m-|\alpha|},
$$
for all multi-indices $\alpha$ and $\beta$, where $C_{\alpha,\beta} > 0$ is independent of $x$ and $\xi$. Then, a pseudo-differential operator is a mapping $f\to T_{a}(f)$ given by
\begin{align*}
T_a(f)(x)=\int_{\mathbb R^n} a(x,\xi)\widehat f(\xi)e^{2\pi i x\xi}d\xi.
\end{align*}
Remark that $T_a$ is well defined on the space of Schwartz functions $S(\mathbb R^n)$ or on the space of all infinitely differentiable functions with compact support $C^{\infty}_c(\mathbb R^n)$, where $\widehat f$ is the Fourier transform of the function $f$.
%%%%%%%%%%%%%%%%%%%%%%%%%%%%%%%%%%
\begin{lemma}\label{pseudo1}
Let $1 < q \leq p < \infty$, $\omega\in A_p(\mathbb R^n)$ and $T_a$
be a  pseudo-differential operator of order $0$. Then, $T_a$ extends to a bounded operator from $L^{p,q}_\omega(\mathbb R^n)$ to  $L^{p,\infty}_\omega(\mathbb R^n)$.
\end{lemma}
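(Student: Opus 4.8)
\emph{Proof plan.} The idea is to bypass any Lorentz-space interpolation entirely and reduce everything to the classical weighted $L^p$ theory for Calder\'on--Zygmund operators, exploiting the fact that the target space $L^{p,\infty}_\omega(\mathbb R^n)$ is the \emph{weakest} Lorentz space with first index $p$. The first step is the soft one: since $q\le p$, Corollary \ref{Cor2.3-WZC2017} (applied once with $q_1=q,\,q_2=p$ and once with $q_2=\infty$) together with the elementary identity $L^{p,p}_\omega(\mathbb R^n)=L^p_\omega(\mathbb R^n)$ — which holds with equality of norms under the normalization used here, because $(t^{1/p}f^*(t))^p\tfrac{dt}{t}=f^*(t)^p\,dt$ and $\int_0^\infty f^*(t)^p\,dt$ depends only on the distribution function of $f$ — yields the chain of continuous inclusions
\begin{align*}
L^{p,q}_\omega(\mathbb R^n)\hookrightarrow L^{p,p}_\omega(\mathbb R^n)=L^p_\omega(\mathbb R^n)\hookrightarrow L^{p,\infty}_\omega(\mathbb R^n),
\end{align*}
with norm one in each embedding. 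Hence it suffices to prove that $T_a$ is bounded on $L^p_\omega(\mathbb R^n)$ for all $1<p<\infty$ and all $\omega\in A_p(\mathbb R^n)$.

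The second step is the substantive one and is where I expect the only real work. A pseudo-differential operator $T_a$ whose symbol satisfies the order-$0$ estimates stated above (the H\"ormander class $S^0_{1,0}$) is a Calder\'on--Zygmund operator: it is bounded on $L^2(\mathbb R^n)$ by the Calder\'on--Vaillancourt theorem, and repeated integration by parts in the oscillatory integral defining its distributional kernel $K(x,y)$ shows that, off the diagonal, $|K(x,y)|\lesssim |x-y|^{-n}$ and $|\nabla_x K(x,y)|+|\nabla_y K(x,y)|\lesssim |x-y|^{-n-1}$. Consequently the Coifman--Fefferman weighted norm inequalities \cite{CF1974} for Calder\'on--Zygmund operators apply and give $\|T_a f\|_{L^p_\omega(\mathbb R^n)}\le C\|f\|_{L^p_\omega(\mathbb R^n)}$ for every $1<p<\infty$ and $\omega\in A_p(\mathbb R^n)$. (Alternatively one may invoke directly the known weighted $L^p$ bounds for $\Psi$DOs of order $0$; the point to be checked carefully either way is precisely that order-$0$ symbols fall within the scope of the weighted Calder\'on--Zygmund machinery.)

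Combining the two steps, for any $f\in L^{p,q}_\omega(\mathbb R^n)$ we get
\begin{align*}
\|T_a f\|_{L^{p,\infty}_\omega(\mathbb R^n)}\le \|T_a f\|_{L^p_\omega(\mathbb R^n)}\le C\|f\|_{L^p_\omega(\mathbb R^n)}\le C\|f\|_{L^{p,q}_\omega(\mathbb R^n)},
\end{align*}
which is the claimed estimate on the dense subspace $C^\infty_c(\mathbb R^n)\subset L^{p,q}_\omega(\mathbb R^n)$ (density holds since $q\le p<\infty$), and $T_a$ extends to all of $L^{p,q}_\omega(\mathbb R^n)$ by continuity. This completes the argument; the main obstacle, as noted, is the verification that $T_a$ is a Calder\'on--Zygmund operator so that the weighted $L^p$ bound is available, while the Lorentz-space part is purely formal.
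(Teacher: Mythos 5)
Your proof is correct, but it takes a genuinely different route from the paper's. The paper never passes through a strong weighted $L^p$ bound for $T_a$: it combines Theorems 2 and 4 of \cite{CHK1982}, which give $\|M(f)\|_{L^{p,\infty}_\omega(\mathbb R^n)}\lesssim\|f\|_{L^{p,q}_\omega(\mathbb R^n)}$ for $\omega\in A_p$, with the pointwise domination $|T_a f|\lesssim M(f)$ on $C^\infty_c(\mathbb R^n)$ (quoted from Theorem 2 of \cite{G2016}), and then concludes by density of $C^\infty_c(\mathbb R^n)$ in $L^{p,q}_\omega(\mathbb R^n)$. You instead collapse the Lorentz scale via $L^{p,q}_\omega\hookrightarrow L^{p,p}_\omega=L^p_\omega\hookrightarrow L^{p,\infty}_\omega$ (all contained in Corollary \ref{Cor2.3-WZC2017}) and put the analytic weight on the fact that an $S^0_{1,0}$ operator is a Calder\'on--Zygmund operator, hence bounded on $L^p_\omega$ for $\omega\in A_p$. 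Your route avoids the weighted Lorentz estimate for $M$, actually proves the stronger statement $T_a:L^{p,q}_\omega\to L^{p}_\omega$, and sidesteps the paper's most delicate ingredient: a pointwise bound $|T_af|\lesssim Mf$ is problematic for order exactly $0$ (it fails for smooth truncations of Riesz transforms, whose action on an indicator has logarithmic blow-up while $Mf$ stays bounded). The paper's scheme, when such domination is available, needs no kernel estimates and feeds directly into the interpolation used for Theorem \ref{pseudo2}. Two minor points: the $L^2$ bound for $S^0_{1,0}$ is the classical H\"ormander result (Calder\'on--Vaillancourt is the $S^0_{0,0}$ statement, though it covers this case), and \cite{CF1974} treats convolution kernels, so you should cite the weighted theory for generalized Calder\'on--Zygmund operators with standard kernels; neither affects correctness.
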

%%%%%%%%%%%%%%%%%%%%%%%%%%%%%%
\begin{proof}
By Theorem 2 and Theorem 4 in \cite{CHK1982}, we have
\begin{align}
\|M(f)\|_{L^{p,\infty}_\omega(\mathbb R^n)}\lesssim \|f\|_{L^{p,q}_\omega(\mathbb R^n)},\,\textit{\rm for all}\,f\in C^{\infty}_c(\mathbb R^n).\nonumber
\end{align}
Next, by estimating as Theorem 2 in \cite{G2016}, we see that
$$
|T_a(f)(\cdot)|\lesssim M(f)(\cdot),\,\textit{\rm for all}\,f\in C^{\infty}_c(\mathbb R^n).
$$
Thus,
\begin{align}
\|T_a(f)\|_{L^{p,\infty}_\omega(\mathbb R^n)}\lesssim \|f\|_{L^{p,q}_\omega(\mathbb R^n)},\,\textit{\rm for all}\,f\in C^{\infty}_c(\mathbb R^n).\nonumber
\end{align}
As mentioned above, since $C^{\infty}_c(\mathbb R^n)$ is dense in $L^{p,q}_\omega(\mathbb R^n)$ (see Corollary 3.2 in \cite{NTY2004}), we immediately have the desired result.
\end{proof}
%%%%%%%%%%%%%%%%%%%%%%%%%%%%%%%%%%%%%
By using Lemma \ref{pseudo1} and Corollary \ref{Cor2.3-WZC2017} and applying the Lorentz version Marcinkiewicz  interpolation theorem as the proof of Theorem 3 in \cite{CHK1982}, we obtain the following useful result.
\begin{theorem}\label{pseudo2}
Let $1 < p < \infty$, $1<q\leq \infty$, $\omega\in A_p(\mathbb R^n)$ and $T_a$
be a  pseudo-differential operator of order $0$. Then, the following statements are true:
\begin{enumerate}
\item[(1)] $T_a$ extends to  a bounded operator from $L^{p,q}_\omega(\mathbb R^n)$ to $L^{p,q}_\omega(\mathbb R^n)$;
\item[(2)] $T_a$  extends to  a bounded operator from $L^{p,q}_\omega(\mathbb{R}^n)$ to $M^{p}_{q,\omega}(\mathbb{R}^n)$;
\item[(3)] $T_a$  extends to  a bounded operator from $L^{p,q}_\omega(\mathbb{R}^n)$ to $WM^{p}_{q,\omega}(\mathbb{R}^n)$.
\end{enumerate}
\end{theorem}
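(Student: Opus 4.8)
The plan is to obtain statement (1) by interpolating two weak-type endpoint bounds supplied by Lemma~\ref{pseudo1}, and then to read off (2) and (3) from (1) together with the norm comparisons of Corollary~\ref{Cor2.3-WZC2017}.

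First I would set up the endpoints. Since $\omega\in A_p(\mathbb R^n)$, the well-known self-improvement (openness) of the Muckenhoupt classes gives an $\varepsilon\in(0,p-1)$ with $\omega\in A_{p-\varepsilon}(\mathbb R^n)$; fix exponents $p_0,p_1$ with $p-\varepsilon<p_0<p<p_1<\infty$, so that $1<p_0$ and $\omega\in A_{p_0}(\mathbb R^n)\subset A_{p_1}(\mathbb R^n)$. Applying Lemma~\ref{pseudo1} to the exponent $p_i$ with second Lorentz index equal to $p_i$, and using the identity $L^{p_i,p_i}_\omega(\mathbb R^n)=L^{p_i}_\omega(\mathbb R^n)$, one gets
\begin{align*}
\|T_a(f)\|_{L^{p_i,\infty}_\omega(\mathbb R^n)}\lesssim\|f\|_{L^{p_i}_\omega(\mathbb R^n)},\qquad f\in C^\infty_c(\mathbb R^n),\ i=0,1,
\end{align*}
that is, $T_a$ is of weak type $(p_i,p_i)$ with respect to the measure $\omega(x)\,dx$ for $i=0,1$.

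Next, since $T_a$ is linear and $p_0<p<p_1$, the Lorentz-space version of the Marcinkiewicz interpolation theorem, applied exactly as in the proof of Theorem~3 of \cite{CHK1982}, yields
\begin{align*}
\|T_a(f)\|_{L^{p,q}_\omega(\mathbb R^n)}\lesssim\|f\|_{L^{p,q}_\omega(\mathbb R^n)},\qquad f\in C^\infty_c(\mathbb R^n),
\end{align*}
for every $0<q\le\infty$, in particular for $1<q\le\infty$. For $q<\infty$ this extends to all of $L^{p,q}_\omega(\mathbb R^n)$ by density of $C^\infty_c(\mathbb R^n)$ (Corollary~3.2 in \cite{NTY2004}), exactly as at the end of the proof of Lemma~\ref{pseudo1}, while the borderline case $q=\infty$ is handled in the same way as in \cite{CHK1982}. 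This proves (1). For (2) and (3) one has $q<p$, so $M^p_{q,\omega}(\mathbb R^n)$ and $WM^p_{q,\omega}(\mathbb R^n)$ are defined, and Corollary~\ref{Cor2.3-WZC2017} gives
\begin{align*}
\|g\|_{WM^p_{q,\omega}(\mathbb R^n)}\le\|g\|_{M^p_{q,\omega}(\mathbb R^n)}\le\|g\|_{WM^p_{p,\omega}(\mathbb R^n)}=\|g\|_{L^{p,\infty}_\omega(\mathbb R^n)}\le\|g\|_{L^{p,q}_\omega(\mathbb R^n)}.
\end{align*}
Taking $g=T_a(f)$ and invoking part (1), we obtain $\|T_a(f)\|_{M^p_{q,\omega}(\mathbb R^n)}\lesssim\|f\|_{L^{p,q}_\omega(\mathbb R^n)}$ and $\|T_a(f)\|_{WM^p_{q,\omega}(\mathbb R^n)}\lesssim\|f\|_{L^{p,q}_\omega(\mathbb R^n)}$, which are (2) and (3).

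The main obstacle is making the interpolation step watertight: one has to check that the pair of weak-type $(p_i,p_i)$ bounds with $p_0<p<p_1$, all taken against the single fixed weight $\omega$, feeds correctly into the Lorentz-scale Marcinkiewicz theorem, so that the output is again a weighted Lorentz estimate with the \emph{same} weight; and one has to be careful about the a~priori domain, since the estimates are initially available only on $C^\infty_c(\mathbb R^n)$ and are extended afterwards by density, which is routine for $q<\infty$ but requires the separate argument of \cite{CHK1982} when $q=\infty$. The remaining ingredients, namely the self-improvement of $A_p$, the inclusion $A_{p_0}\subset A_{p_1}$, the identity $L^{p_i,p_i}_\omega=L^{p_i}_\omega$, and the nesting of Lorentz spaces in the second index, are all standard.
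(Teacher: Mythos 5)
Your proposal is correct and follows essentially the same route the paper sketches: the two weak-type endpoints obtained from Lemma \ref{pseudo1} via the openness of $A_p$ (choosing $p_0<p<p_1$ with $\omega\in A_{p_0}\subset A_{p_1}$), the Lorentz-scale Marcinkiewicz interpolation exactly as in the proof of Theorem 3 of \cite{CHK1982}, and then Corollary \ref{Cor2.3-WZC2017} to pass to the Morrey and weak Morrey norms. You have simply made explicit the details (endpoint selection, density of $C^\infty_c$, the $q=\infty$ caveat) that the paper leaves to the cited references.
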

%%%%%%%%%%%%%%%%%%%%%%%%%%%%%%%%%%%
%%%%%%%%%%%%%%%%%%%%%%%%%%%%%%%%%%%%
For $1 < p < \infty$, by Lemma \ref{Lem2.8-CHK1982}, we observe that  $\omega\in A(p,1)$ implies $\omega\in \Delta$. Thus, combining with Theorem 3.1 in \cite{KS2009}, we can get the following result.
%%%%%%%%%%%%%%%%%%%%%%%%%%%%%%%%%%%%%
\begin{theorem}
If $1 < p < \infty, 0 < \kappa < 1, \omega\in  A(p,1)$, then the operator $M_\omega$  is bounded on $\mathcal{L}^{q,\kappa}_\omega(\mathbb{R}^n)$.
\end{theorem}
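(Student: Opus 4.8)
The plan is to recognise $M_\omega$ as a pointwise multiple of the Hardy--Littlewood maximal operator attached to the measure $d\mu=\omega\,dx$, and to exploit that this measure is doubling. First I would use Lemma~\ref{Lem2.8-CHK1982}: applying the inequality there with $E=Q$ and with $2Q$ in the role of $Q$ gives $2^{-n}=|Q|/|2Q|\le C(\omega(Q)/\omega(2Q))^{1/p}$, hence $\omega(2Q)\le C^{p}2^{np}\omega(Q)$; that is, $\mu:=\omega\,dx\in\Delta_2$. Writing $M_\mu f(x)=\sup_{B\ni x}\mu(B)^{-1}\int_B|f|\,d\mu$ for the uncentered maximal operator relative to $\mu$, the doubling property gives $\mu(5B)\simeq\mu(B)$, and therefore $M_\omega f\simeq M_\mu f$ pointwise; in particular $M_\omega$ is of weak type $(1,1)$ with respect to $\mu$ and is bounded on $L^{s}(\mu)$ for every $1<s\le\infty$. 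Finally, $\mathcal{L}^{q,\kappa}_\omega(\mathbb R^n)$ is exactly the Morrey space modelled on $\mu$, with $\|f\|_{\mathcal{L}^{q,\kappa}_\omega}=\sup_Q\mu(Q)^{-\kappa/q}\big(\int_Q|f|^q\,d\mu\big)^{1/q}$, and because $\mu$ is doubling, cubes and balls may be interchanged in this norm up to fixed dilations and constants.

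With these reductions I would run the standard Morrey decomposition. Fix a ball $B_0$ and split $f=f_1+f_2$ with $f_1=f\chi_{2B_0}$ and $f_2=f\chi_{\mathbb R^n\setminus 2B_0}$. For the local part, the $L^q(\mu)$-boundedness of $M_\omega$ together with doubling yields
\[
\int_{B_0}(M_\omega f_1)^q\,d\mu\le\int_{\mathbb R^n}(M_\omega f_1)^q\,d\mu\lesssim\int_{2B_0}|f|^q\,d\mu\lesssim\mu(B_0)^{\kappa}\,\|f\|_{\mathcal{L}^{q,\kappa}_\omega}^q .
\]
For the global part I would prove the pointwise estimate $M_\omega f_2(x)\lesssim\mu(B_0)^{(\kappa-1)/q}\|f\|_{\mathcal{L}^{q,\kappa}_\omega}$ for every $x\in B_0$: any ball $B\ni x$ that meets $\mathbb R^n\setminus 2B_0$ has radius comparable to or larger than that of $B_0$, so that $B_0\subset 5B$ and $B$ lies in a fixed dilate of $5B$; Hölder's inequality on that dilate together with the Morrey bound gives $\mu(5B)^{-1}\int_B|f_2|\,d\mu\lesssim\mu(5B)^{(\kappa-1)/q}\|f\|_{\mathcal{L}^{q,\kappa}_\omega}$, and since $\kappa-1<0$ while $\mu(5B)\gtrsim\mu(B_0)$ the supremum over all admissible $B$ is $\lesssim\mu(B_0)^{(\kappa-1)/q}\|f\|_{\mathcal{L}^{q,\kappa}_\omega}$. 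Integrating this over $B_0$ gives $\int_{B_0}(M_\omega f_2)^q\,d\mu\lesssim\mu(B_0)^{\kappa}\|f\|_{\mathcal{L}^{q,\kappa}_\omega}^q$. Adding the two bounds, dividing by $\mu(B_0)^{\kappa}$ and taking the supremum over all balls $B_0$ yields $\|M_\omega f\|_{\mathcal{L}^{q,\kappa}_\omega}\lesssim\|f\|_{\mathcal{L}^{q,\kappa}_\omega}$, which is the assertion.

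The routine ingredient is the local estimate; the step that needs care is the global part, namely the geometric bookkeeping that forces every ball $B$ which ``sees'' the complement of $2B_0$ and still meets $B_0$ to satisfy $\operatorname{rad}(B)\gtrsim\operatorname{rad}(B_0)$, hence $\mu(5B)\gtrsim\mu(B_0)$. This is precisely where both hypotheses enter: the doubling of $\mu$ (which is itself where $\omega\in A(p,1)$ is used), allowing one to pass freely between $\mu(B)$, $\mu(5B)$ and the measures of bounded dilates, and the condition $\kappa<1$, which makes the exponent $(\kappa-1)/q$ negative so that the supremum defining $M_\omega f_2$ is controlled by the smallest admissible scale rather than diverging. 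Alternatively, once $\mu\in\Delta_2$ has been recorded, one may simply invoke Theorem~3.1 of \cite{KS2009}, which is exactly the boundedness of the maximal operator associated with a doubling measure on the corresponding Morrey space.
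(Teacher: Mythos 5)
Your proposal is correct, and its essential first step coincides with the paper's: applying Lemma~\ref{Lem2.8-CHK1982} with $E=Q$ inside the cube $2Q$ to conclude that $\omega\,dx$ is doubling is exactly the observation made in the text, and your closing remark (once doubling is recorded, invoke Theorem 3.1 of \cite{KS2009}) is in fact the paper's entire proof. Where you go further is that the bulk of your write-up reproves the cited result directly: the split $f=f\chi_{2B_0}+f\chi_{(2B_0)^c}$, the $L^q(\omega)$ bound for the local piece, and for the tail the geometric observation that any ball $B$ containing a point of $B_0$ and meeting $(2B_0)^c$ has radius at least half that of $B_0$, so $B_0\subset 5B$ and $\omega(5B)\geq\omega(B_0)$, whence H\"older's inequality and $\kappa<1$ give the pointwise bound $M_\omega(f\chi_{(2B_0)^c})(x)\lesssim\omega(B_0)^{(\kappa-1)/q}\|f\|_{\mathcal{L}^{q,\kappa}_\omega(\mathbb{R}^n)}$ for $x\in B_0$. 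This bookkeeping is sound (the normalization by $\omega(5B)$ in the definition of $M_\omega$ makes the H\"older step painless), and what the longer route buys is a self-contained argument independent of \cite{KS2009}. One caveat: your local estimate uses the $L^q(\omega)$-boundedness of $M_\omega$, which requires $q>1$ (weak type $(1,1)$ is not enough at $q=1$); the paper's statement is itself loose on this point, since its hypotheses constrain $p$ while the space carries the exponent $q$, so you should state $1<q<\infty$ explicitly as a hypothesis of your argument, which is also the setting in which Theorem 3.1 of \cite{KS2009} applies.
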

%%%%%%%%%%%%%%%%%%%%%%%%%%%%%%%%%%%%%%%%%%%
%%%%%%%%%%%%%%%%%%%%%%%%%%%%%%%%%%%%%%%%%%%
Similarly to the known characterizations of the $A_p$ weights given in \cite{WZC2017}, we also have another characterizations for the $A_p(\varphi)$ weights as follows.
\begin{theorem}\label{max-phi}
Let either $1 < q < p < \infty$ or $0 < q < p = 1$. Then, the following statements are equivalent:
\begin{enumerate}
\item[(1)]	$\omega\in A_p(\varphi)$;
\item[(2)]	$M_\varphi$  is a bounded operator from $L^{p}_\omega(\mathbb{R}^n)$ to $L^{p,\infty}_\omega(\mathbb{R}^n)$;
\item[(3)]	$M_\varphi$  is a bounded operator from $L^{p}_\omega(\mathbb{R}^n)$ to $M^{p}_{q,\omega}(\mathbb{R}^n)$;
\item[(4)]	$M_\varphi$  is a bounded operator from $L^{p}_\omega(\mathbb{R}^n)$ to $WM^{p}_{q,\omega}(\mathbb{R}^n)$.
\end{enumerate}
\end{theorem}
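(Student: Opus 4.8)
I would prove the theorem by establishing the cycle of implications $(1)\Rightarrow(2)\Rightarrow(3)\Rightarrow(4)\Rightarrow(1)$, relying on the results already available. The equivalence $(1)\Leftrightarrow(2)$ is precisely Lemma \ref{Lem2.3-Ta2012}, which covers both $1<p<\infty$ and $p=1$, so nothing new is needed there. The implications $(2)\Rightarrow(3)\Rightarrow(4)$ involve only the function spaces and not the operator: by Corollary \ref{Cor2.3-WZC2017}, applied with any auxiliary exponent $0<r<q<p$, one has the pointwise chain of quasi-norms $\|g\|_{WM^{p}_{q,\omega}(\mathbb{R}^n)}\le\|g\|_{M^{p}_{q,\omega}(\mathbb{R}^n)}\le\|g\|_{WM^{p}_{p,\omega}(\mathbb{R}^n)}=\|g\|_{L^{p,\infty}_\omega(\mathbb{R}^n)}$ for every measurable $g$, so that boundedness of $M_\varphi$ from $L^{p}_\omega$ into the smallest space $L^{p,\infty}_\omega$ immediately forces boundedness into $M^{p}_{q,\omega}$ and then into $WM^{p}_{q,\omega}$.

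The only implication requiring genuine work is $(4)\Rightarrow(1)$, which I would prove by testing the weak Morrey bound against a suitable function supported on a single cube. Fix a cube $Q$. For $1<p<\infty$ put $\sigma=\omega^{-1/(p-1)}$, and, to avoid integrability issues, use the truncations $\sigma_N=\min(\sigma,N)$ and the test functions $f_N=\sigma_N\chi_Q$, which lie in $L^{p}_\omega(\mathbb{R}^n)$ because they are bounded with compact support and $\omega\in L^1_{\rm loc}$. Since $\omega=\sigma^{-(p-1)}$, one checks the pointwise bound $\sigma_N^{p}\,\omega\le\sigma_N$, hence $\|f_N\|_{L^{p}_\omega(\mathbb{R}^n)}^{p}\le\int_Q\sigma_N\,dx$. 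On the other hand, for each $x\in Q$ the cube $Q$ itself is admissible in the supremum defining $M_\varphi f_N(x)$, so $M_\varphi f_N(x)\ge\lambda_Q:=\frac{1}{\varphi(|Q|)|Q|}\int_Q\sigma_N\,dx$ on all of $Q$; therefore $\{x\in Q:M_\varphi f_N(x)>\lambda\}=Q$ whenever $0<\lambda<\lambda_Q$, and reading the $WM^{p}_{q,\omega}$ quasi-norm off this single cube and these levels gives $\|M_\varphi f_N\|_{WM^{p}_{q,\omega}(\mathbb{R}^n)}\ge\lambda_Q\,\omega(Q)^{1/p}$. Combining the two estimates with $(4)$ yields $\lambda_Q\,\omega(Q)^{1/p}\lesssim\big(\int_Q\sigma_N\,dx\big)^{1/p}$; substituting the value of $\lambda_Q$, raising to the $p$-th power and letting $N\to\infty$ by monotone convergence produces $\big(\int_Q\omega^{-1/(p-1)}dx\big)^{p-1}\omega(Q)\lesssim\big(\varphi(|Q|)|Q|\big)^{p}$, which is exactly the $A_p(\varphi)$ condition for $Q$. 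In the endpoint case $0<q<p=1$ the same scheme with $f=\chi_E$ for measurable $E\subset Q$ gives $\frac{|E|}{\varphi(|Q|)|Q|}\,\omega(Q)\lesssim\omega(E)$; choosing $E=\{x\in Q:\omega(x)<t\}$, dividing by $|E|$ and letting $t$ decrease to $\operatorname*{ess\,inf}_{Q}\omega$ gives $\frac{1}{\varphi(|Q|)|Q|}\int_Q\omega\,dx\lesssim\operatorname*{ess\,inf}_{Q}\omega$ for every cube $Q$, after which a routine covering argument (using that $\varphi$ is increasing with $\varphi(2^n t)\lesssim\varphi(t)$) upgrades this to $M_\varphi\omega(x)\lesssim\omega(x)$ almost everywhere, i.e. $\omega\in A_1(\varphi)$.

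The main obstacle is confined entirely to $(4)\Rightarrow(1)$: one has to organize the truncation so that the passage $N\to\infty$ is legitimate, extract the lower bound for the $WM^{p}_{q,\omega}$ quasi-norm correctly (it is a double supremum, over cubes and over levels, and we retain only one of each), and, in the endpoint $p=1$, carefully justify the step from the averaged inequality to the pointwise $A_1(\varphi)$ condition. Everything else is either a citation of Lemma \ref{Lem2.3-Ta2012} or a formal consequence of the embeddings in Corollary \ref{Cor2.3-WZC2017}.
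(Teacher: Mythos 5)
Your proposal is correct and follows essentially the same route as the paper: (1)$\Leftrightarrow$(2) is quoted from Lemma~\ref{Lem2.3-Ta2012}, (2)$\Rightarrow$(3)$\Rightarrow$(4) comes from the norm chain in Corollary~\ref{Cor2.3-WZC2017}, and (4)$\Rightarrow$(1) is obtained by testing the weak Morrey bound on a single cube with a regularization of $\omega^{1-p'}\chi_Q$ when $1<p<\infty$ and with indicator test functions when $p=1$. The only differences are cosmetic: the paper regularizes with $f_\varepsilon=(\omega+\varepsilon)^{1-p'}\chi_Q$ and lets $\varepsilon\to 0^+$ instead of truncating $\omega^{-1/(p-1)}$ at level $N$, and in the endpoint case it tests with subcubes $Q_1\subset Q$ and invokes the Lebesgue differentiation theorem rather than your level sets $\{\omega<t\}$ and essential infimum.
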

%%%%%%%%%%%%%%%%%%%%%%%%%%%%%%%%%%%%%%%%%%
\begin{proof}
By Lemma \ref{Lem2.3-Ta2012} and Corollary \ref{Cor2.3-WZC2017}, it is clear that the relation (1) $\Leftrightarrow$ (2) and (2) $\Rightarrow$ (3) $\Rightarrow$ (4). Thus, to complete the proof, we need to prove the (4) $\Rightarrow$ (1). More precisely, it is as the following.
\vskip 5pt
In the case $1 < q < p < \infty$, let $Q$ be any cube and take $f_\varepsilon=(\omega+\varepsilon)^{1-p'}\chi_Q$, for all $\varepsilon>0$, where $p'$ is a 
conjugate real number of $p$, i.e $\frac{1}{p}+\frac{1}{p'}=1$. It immediately follows that $f_\varepsilon\in L^p_\omega(\mathbb{R}^n)$.
For any $0 < \lambda < \frac{(\omega+\varepsilon)^{1-p'}(Q)}{\varphi(|Q|)|Q|}$, by letting $x\in Q$, it is clear to see that
\begin{align*}
M_\varphi(f_\varepsilon)(x)\ge \frac{1}{\varphi(|Q|)|Q|}\int_Q|f_\varepsilon(y)|dy=\frac{1}{\varphi(|Q|)|Q|}\int_Q (\omega+\varepsilon)^{1-p'}dy>\lambda.
\end{align*} 
Hence, we obtain
\begin{align*}
Q=\{x\in Q: M_\varphi(f_\varepsilon)(x)>\lambda \}.
\end{align*}
Consequently, because $M_\varphi$ is a bounded operator from $L^p_\omega(\mathbb{R}^n)$ to $WM^p_{q,\omega}(\mathbb{R}^n)$, we infer
\begin{align}\label{omegaQ-weak}
\lambda\omega(Q)^{\frac{1}{p}}&=\frac{1}{\omega(Q)^{\frac{1}{q}-\frac{1}{p}}}\lambda\Big(\int_{\{x\in Q: M_\varphi(f_\varepsilon)(x)>\lambda \}}\omega(x)dx \Big)^{\frac{1}{q}}\notag\\
&\le \|M_\varphi(f_\varepsilon) \|_{WM^p_{q,\omega}(\mathbb{R}^n)}\lesssim \|f_\varepsilon\|_{L^{p}_\omega(\mathbb{R}^n)}=\Big(\int_Q (\omega+\varepsilon)^{-p'}\omega(x)dx \Big)^{\frac{1}{p}}.
\end{align}
Thus, by choosing $\lambda=\frac{(\omega+\varepsilon)^{1-p'}(Q)}{2\varphi(|Q|)|Q|}$, we get
\begin{align*}
\Big(\frac{1}{\varphi(|Q|)|Q|} \int_Q (\omega+\varepsilon)^{-p'}\omega(x)dx\Big)^p.\Big(\int_Q\omega(x)dx \Big)\lesssim \int_Q (\omega+\varepsilon)^{-p'}\omega(x)dx,
\end{align*}
which implies that
\begin{align*}
\Big(\frac{1}{\varphi(|Q|)|Q|}  \int_Q\omega(x)dx\Big) \Big( \frac{1}{\varphi(|Q|)|Q|} \int_Q (\omega+\varepsilon)^{-p'}\omega(x)dx\Big)^{p-1}\lesssim 1, 
\end{align*}
for all $\varepsilon>0$. By letting $\varepsilon\to 0^+$ and using dominated convergence theorem of Lebesgue, we obtain $\omega\in A_p(\varphi)$.
\vskip 5pt
In the case $0 < q < p = 1$, let us fix	$Q$ and take any cube $Q_1 \subset Q$. Thus, we choose $f = \chi_{Q_1}$. For any $0 < \lambda <\frac{|Q_1|}{\varphi(|Q|)|Q|}$, by estimating as (\ref{omegaQ-weak}) above, we immediately have
\begin{align*}
\lambda\Big( \int_{Q}\omega(x)dx \Big)\le \int_{Q_1}\omega(x)dx.
\end{align*}
Next, by choosing $\lambda=\frac{|Q_1|}{2\varphi(|Q|)|Q|}$, we infer
\begin{align*}
\frac{1}{|Q|}\int_Q\omega(x)dx\lesssim \frac{1}{|Q_1|}\int_{Q_1}\omega(x)dx, \text{ for any} ~Q_1\subset Q.
\end{align*}
Hence, by the definition of operator $M_\varphi$ and the Lebesgue differentiation theorem, it follows that
\begin{align*}
M_\varphi(\omega)(x)\lesssim \omega(x), \text{ for \;a.e.}\; x\in\mathbb{R}^n,
\end{align*}
which gives $\omega\in A_1(\varphi)$.
\end{proof}
%%%%%%%%%%%%%%%%%%%%%%%%%%%%%%%%%%
%%%%%%%%%%%%%%%%%%%%%%%%%%%%%%%%%%%
In final part of this section, we give the weighted norm inequality of weak type for new dyadic maximal operators $M^\Delta_{\varphi,2\eta}$ on the  vector valued  Lebesgue spaces with weighted functions in $A_p^{\Delta,\eta}(\varphi)$.
\begin{theorem}\label{max-Delta-2eta}
If $1 < p < r < \infty, \omega \in A_p^{\Delta,\eta}(\varphi)$ for $\eta>0$, then operator $M^\Delta_{\varphi,2\eta}$ is bounded from $L^p_\omega(\ell^r,\mathbb{R}^n)$ to $L^{p,\infty}_\omega(\ell^r,\mathbb{R}^n)$.
\end{theorem}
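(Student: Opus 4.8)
The plan is to linearise the vector-valued bound to a scalar weighted weak-type $(1,1)$ inequality, by exploiting the pointwise domination of $M^\Delta_{\varphi,2\eta}$ by a weighted maximal operator. First I would record the pointwise estimate. Since $\varphi(t)=(1+t)^{\alpha_0}\ge 1$ one has $M^\Delta_{\varphi,2\eta}\le M^\Delta_{\varphi,\eta}$, so combining this with the pointwise bound noted just before Lemma~\ref{Lemma-dyadic} (an application of Hölder's inequality together with the $A_p^{\Delta,\eta}(\varphi)$ condition) gives, for $\omega\in A_p^{\Delta,\eta}(\varphi)$ and every $f$,
\[
\bigl(M^\Delta_{\varphi,2\eta}(f)(x)\bigr)^{p}\lesssim M_\omega\bigl(|f|^{p}\bigr)(x),\qquad x\in\mathbb{R}^{n},
\]
where it is convenient to read $M_\omega$ as the dyadic maximal operator $M_\omega^\Delta h(x)=\sup_{x\in Q\ \mathrm{dyadic}}\omega(Q)^{-1}\int_{Q}|h|\,\omega\,dy$ associated with the measure $\omega\,dx$. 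Applying this to each $f_k$, putting $g_k=|f_k|^{p}$ and $s=r/p$ (here the hypothesis $p<r$ is exactly what makes $s>1$), and summing over $k$ in $\ell^{r}$ with two uses of Minkowski's inequality for $\ell^{s}$, one obtains the pointwise bound
\[
\bigl|M^\Delta_{\varphi,2\eta}(\vec f)(x)\bigr|_{r}\lesssim\Bigl(\bigl|M_\omega^\Delta(\vec g)(x)\bigr|_{s}\Bigr)^{1/p}.
\]

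Next I would transfer the target norm. Since $\|F\|_{L^{p,\infty}_\omega(\mathbb{R}^n)}^{p}=\bigl\||F|^{p}\bigr\|_{L^{1,\infty}_\omega(\mathbb{R}^n)}$ and the weak-$L^{1}(\omega)$ quasi-norm is monotone and positively homogeneous, the last display yields
\[
\bigl\|M^\Delta_{\varphi,2\eta}(\vec f)\bigr\|_{L^{p,\infty}_\omega(\ell^{r},\mathbb{R}^{n})}^{p}\lesssim\bigl\|M_\omega^\Delta(\vec g)\bigr\|_{L^{1,\infty}_\omega(\ell^{s},\mathbb{R}^{n})}.
\]
So the theorem will follow once the Fefferman--Stein vector-valued weak-type $(1,1)$ inequality
\[
\bigl\|M_\omega^\Delta(\vec g)\bigr\|_{L^{1,\infty}_\omega(\ell^{s},\mathbb{R}^{n})}\lesssim\|\vec g\|_{L^{1}_\omega(\ell^{s},\mathbb{R}^{n})},\qquad 1<s<\infty ,
\]
is available, because one then only has to observe that
\[
\|\vec g\|_{L^{1}_\omega(\ell^{s})}=\int_{\mathbb{R}^{n}}\Bigl(\sum_{k}|f_k(x)|^{r}\Bigr)^{p/r}\omega(x)\,dx=\|\vec f\|_{L^{p}_\omega(\ell^{r})}^{p},
\]
and the estimate closes.

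It remains to prove the vector-valued weak-$(1,1)$ bound for $M_\omega^\Delta$, and this is the step where the real work lies. I would attack it by a Calderón--Zygmund decomposition of the scalar function $|\vec g(\cdot)|_{s}$ at height $\alpha$ carried out with respect to the measure $\omega\,dx$ along the dyadic grid: writing $\Omega$ for the union of the maximal dyadic cubes on which the $\omega$-average of $|\vec g|_{s}$ exceeds $\alpha$, one has $\omega(\Omega)\lesssim\alpha^{-1}\|\vec g\|_{L^{1}_\omega(\ell^{s})}$; the restriction $\vec g\chi_{\Omega^{c}}$, which satisfies $|\vec g\chi_{\Omega^c}|_{s}\le\alpha$ $\omega$-a.e., is controlled by the scalar $L^{s}(\omega\,dx)$-boundedness of $M_\omega^\Delta$ together with Minkowski's inequality for $\ell^{s}$; and the part of $\vec g$ supported on $\Omega$ produces a level set essentially contained in $\Omega$. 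The delicate point — and the reason for working with the dyadic form of the operator (equivalently, with the enlargement $\omega(5B)$ in the denominator of $M_\omega$) — is that $A_p^{\Delta,\eta}(\varphi)$ weights are not doubling, so the argument must be arranged so that the stopping machinery closes without ever dilating a cube or invoking $\omega(2Q)\lesssim\omega(Q)$; the selected cubes being dyadic, the ``bad'' part stays localised and the Fefferman--Stein scheme for the dyadic maximal operator relative to a general Borel measure applies. This is the main obstacle; by contrast the pointwise domination, the quasi-norm manipulation $F\mapsto|F|^{p}$, and the bookkeeping of exponents are routine. Finally, $1<p<r$ places $s=r/p$ precisely in the range $(1,\infty)$ in which this vector-valued weak-$(1,1)$ inequality holds; for $r=p$ the corresponding $\ell^{1}$-valued statement is false, in agreement with the remark following Lemma~\ref{Lemma-dyadic}.
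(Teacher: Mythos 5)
Your reduction steps are fine as far as they go: the pointwise bound $\bigl(M^\Delta_{\varphi,2\eta}f\bigr)^p\lesssim M^\Delta_\omega(|f|^p)$ for $\omega\in A_p^{\Delta,\eta}(\varphi)$ (dyadic $\omega$-averages, via H\"older on each cube), the identity $\|F\|_{L^{p,\infty}_\omega}^p=\||F|^p\|_{L^{1,\infty}_\omega}$, and the bookkeeping with $s=r/p>1$ are all correct. The genuine gap is that everything is then thrown onto the vector-valued weak-type $(1,1)$ Fefferman--Stein inequality for the dyadic maximal operator with respect to the measure $\omega\,dx$, and this lemma is neither quoted from anywhere nor actually proved by your sketch. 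Two points break down there. First, the level set of the ``bad'' part is \emph{not} essentially contained in $\Omega$: for $x\notin\Omega$ every dyadic cube containing $x$ has $\omega$-average of $|\vec g|_s$ at most $\alpha$, but in $|M^\Delta_\omega\vec g\,\chi_\Omega(x)|_s$ each coordinate $k$ takes its supremum over a \emph{different} cube of the chain through $x$, so the $\ell^s$-aggregate can exceed $\alpha$ off $\Omega$; if the containment you assert were true, the vector-valued inequality would follow trivially from the scalar one, which it does not. Second, the standard Fefferman--Stein repair (replace $g_k\chi_\Omega$ by its averages over the selected cubes $Q_j$ and use that these averages are $O(\alpha)$) requires $\omega(\widehat Q_j)\lesssim\omega(Q_j)$, a comparison between a selected cube and its dyadic parent, i.e.\ exactly the doubling of $\omega\,dx$ that $A_p^{\Delta,\eta}(\varphi)$ weights lack; working on the dyadic grid does not supply this bound, so the ``main obstacle'' you flag is in fact not overcome by your scheme. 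It is also telling that your argument never uses the drop from $2\eta$ to $\eta$: if your key lemma held for general $\omega$, you would have proved the stronger statement for $M^\Delta_{\varphi,\eta}$ itself, with all the difficulty hidden in an unproved general-measure endpoint inequality.

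Compare with how the paper proceeds: the Calder\'on--Zygmund stopping is performed with respect to $\varphi(|Q|)^{\eta}|Q|$ (Tang's Lemma 2.5), not with respect to $\omega\,dx$, so Lebesgue doubling gives the two-sided bound $\alpha\le\frac{1}{\varphi(|Q_j|)^\eta|Q_j|}\int_{Q_j}|\vec f|_r\,dx\le 2^n\varphi(4n)\,\alpha$ on the selected cubes; the Fefferman--Stein pointwise domination $M^\Delta_{\varphi,2\eta}(f_k'')\le M^\Delta_{\varphi,\eta}(\overline f_k)$ off $\Omega$ is where the exponent gap $2\eta$ versus $\eta$ is consumed (the factor $\varphi(|Q_j|)^\eta/\varphi(|Q|)^\eta\le 1$ absorbs the normalization of the averages); the good part and the averaged functions $\overline f_k$ are controlled by the strong $L^r_\omega$ bound of Lemma \ref{Lemma-dyadic}, which is where $p<r$ enters; and the weight hypothesis $\omega\in A_p^{\Delta,\eta}(\varphi)$ is used only at the last step, to convert $\omega(Q_j)\lesssim\alpha^{-p}\int_{Q_j}|\vec f|_r^p\,\omega\,dx$. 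To salvage your approach you would have to actually prove the weak $(1,1)$ Fefferman--Stein inequality for $M^\Delta_\omega$ under no doubling assumption (or under the $A_p^{\Delta,\eta}(\varphi)$ hypothesis), which is a substantial theorem in its own right and precisely the work your proposal defers.
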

%%%%%%%%%%%%%%%%%%%%%%%%%%%%%%%%%%%%%%%%
\begin{proof}
Let $\vec{f}\in \vec S$ and $\alpha > 0$, where $\vec S$ the linear space of sequences $\vec f = \{f_k\}$ such that each $f_k(x)$ is a simple function on $\mathbb R^n$ and $f_k(x)\equiv 0$ for all sufficiently large $k$. By using Lemma 2.5 in \cite{Ta2012}, there exists a disjoint union of maximal dyadic cubes $\{Q_j\}$ such that
\begin{align}\label{ineq-fr}
|\vec{f}(x)|_r \le \alpha,x\notin \Omega=\cup_{j=1}^\infty Q_j;
\end{align}
\begin{align}\label{ineq-fr-alpha}
\alpha \le \frac{1}{\varphi(|Q_j|)^\eta|Q_j|}\int_{Q_j}|\vec{f}(x)|_rdx &\le 2^n\varphi(4n).\alpha, \text{for all} ~j\in\mathbb{Z}^+.
\end{align}
Now, we compose $\vec{f}=\vec{f'}+\vec{f^{''}}$, where $\vec{f'}=\{f'_k\},f'_k(x)=f_k(x)\chi_{\mathbb{R}^n\backslash \Omega}(x)$. This gives
\begin{align*}
|M^\Delta_{\varphi,2\eta}(\vec{f})(x)|_r\le |M^\Delta_{\varphi,2\eta}(\vec{f'})(x)|_r +|M^\Delta_{\varphi,2\eta}(\vec{f^{''}})(x)|_r.
\end{align*}
As a consequence, we need to prove the following two results
\begin{align}\label{esti-omega1}
\omega\left(\{x\in\mathbb{R}^n: |M^\Delta_{\varphi,2\eta}(\vec{f'})(x)|_r>\alpha \}  \right)\lesssim \alpha^{-p}\|\vec{f}\|^p_{L^p_\omega(\ell^r, \mathbb R^n)},
\end{align}
and
\begin{align}\label{esti-omega2}
\omega\left(\{x\in\mathbb{R}^n: |M^\Delta_{\varphi,2\eta}(\vec{f^{''}})(x)|_r>\alpha \}  \right)\lesssim \alpha^{-p}\|\vec{f}\|^p_{L^p_\omega(\ell^r, \mathbb R^n)}.
\end{align}
By Lemma \ref{Lemma-dyadic}, for $\omega \in A_p^{\Delta,\eta}(\varphi)$ we have
$$\int_{\mathbb{R}^n}|M^\Delta_{\varphi,2\eta}(f'_k)(x)|^r\omega(x)dx\leq \int_{\mathbb{R}^n}|M^\Delta_{\varphi,\eta}(f'_k)(x)|^r\omega(x)dx
\lesssim \int_{\mathbb{R}^n}|f'_k(x)|^r\omega(x)dx.$$
This implies that
\begin{align*}
\int_{\mathbb{R}^n}|M^\Delta_{\varphi,2\eta}(\vec f')(x)|^r_r\omega(x)dx&=\int_{\mathbb{R}^n}\sum_{k=1}^\infty |M^\Delta_{\varphi,2\eta}(f'_k)(x)|^r\omega(x)dx\\
&=\sum_{k=1}^\infty \int_{\mathbb{R}^n}|M^\Delta_{\varphi,2\eta}(f'_k)(x)|^r\omega(x)dx\\
&\lesssim\sum_{k=1}^\infty \int_{\mathbb{R}^n}|f'_k(x)|^r\omega(x)dx\\
&\lesssim\int_{\mathbb{R}^n}|\vec f'(x)|_r^r\omega(x)dx.
\end{align*}
Hence, by the Chebysev inequality, it immediately follows that
\begin{align}\label{ineq-Chebysev}
\omega\left(\{x\in\mathbb{R}^n: |M^\Delta_{\varphi,2\eta}(\vec{f'})(x)|_r>\alpha \}  \right)\lesssim \alpha^{-r}\|\vec{f'}\|^r_{L^r_\omega(\ell^r,\mathbb R^n)}.
\end{align}
On the other hand, by (\ref{ineq-fr}), we infer
\begin{align*}
|\vec{f'}(x)|_r^r\le \alpha^{r-p}|\vec{f}(x)|_r^p,
\end{align*}
which implies that, by (\ref{ineq-Chebysev}), the inequality (\ref{esti-omega1}) is holded.

It remains only to show that the inequality (\ref{esti-omega2}) is true. To  estimate the inequality (\ref{esti-omega2}), we put $\overline{f}=\{\overline{f}_k \}$ as follows
\begin{align*}
\overline{f}_k(x)=\begin{cases}\frac{1}{\varphi(|Q_j|)^\eta|Q_j|}\int_{Q_j}|f_k(y)|dy,& x\in Q_j, j=1,2,...,\\
0,& \text{otherwise}. \end{cases}
\end{align*} 
Then, we obtain the important inequality as follows
\begin{align}\label{Fefferman-Stein}
M^\Delta_{\varphi,2\eta}(f^{''}_k)(x)\le M^\Delta_{\varphi,\eta}(\overline{f}_k)(x),x\notin \Omega.
\end{align}
Indeed, let $x\notin \Omega$ and $Q$ be any dyadic cube such that $x \in Q$. Thus, one has
\begin{align*}
\int_Q|f^{''}_k(y)|dy=\int_{Q\cap \Omega}|f_k(y)|dy=\sum\limits_{j\in J}\int_{Q\cap Q_j}|f_k(y)|dy,
\end{align*} 
where $J = \{j \in \mathbb{N} : Q_j \cap Q \ne \emptyset\}$.  Since $\{Q_j\}$ and $Q$ are dyadic cubes, and $x \in Q$, we  immediately have $J = \{j \in \mathbb{N} : Q_j \subset Q\}$.
Hence, we infer
\begin{align}\label{int_f''}
\int_Q|f^{''}_k(y)|dy=\sum\limits_{j\in J}\int_{Q_j}|f_k(y)|dy.
\end{align}
On the other hand, we get
\begin{align*}
\int_{Q_j}\overline{f}_k(y)dy=\int_{Q_j}\Big(\frac{1}{\varphi(|Q_j|)^\eta|Q_j|} \int_{Q_j}|f_k(t)|dt\Big)dy=\frac{1}{\varphi(|Q_j|)^\eta}\int_{Q_j}|f_k(t)|dt.
\end{align*}
Therefore, by (\ref{int_f''}), one has
\begin{align}
\frac{1}{\varphi(|Q|)^{2\eta}|Q|}\int_Q|f^{''}_k(y)|dy&=\frac{1}{\varphi(|Q|)^{2\eta}|Q|}\sum\limits_{j\in J}\Big(\varphi(|Q_j|)^\eta\int_{Q_j}\overline{f}_k(y)dy \Big)\nonumber
\\
&=\frac{1}{\varphi(|Q|)^{\eta}|Q|}\sum\limits_{j\in J}\Big(\frac{\varphi(|Q_j|)^\eta}{\varphi(|Q|)^\eta}\int_{Q_j}\overline{f}_k(y)dy \Big)\nonumber
\\
&\le \frac{1}{\varphi(|Q|)^{\eta}|Q|}\int_Q\overline{f}_k(y)dy.\nonumber
\end{align}
This implies that inequality (\ref{Fefferman-Stein}) is true.
\\

Next, for any $x \in \Omega$, there only exists  a dyadic cube $Q_j$  such that $x \in Q_j$. Thus, by the Minkowski inequality and (\ref{ineq-fr-alpha}), we have
\begin{align*}
|\overline{f}(x)|_r&=\Big(\sum\limits_{k=1}^\infty\Big(\frac{1}{\varphi(|Q_j|)^{\eta}|Q_j|}\int_{Q_j}|f_k(y)|dy  \Big)^r \Big)^{\frac{1}{r}}\\
&\le \frac{1}{\varphi(|Q_j|)^{\eta}|Q_j|}\int_{Q_j}|\vec{f}(y)|_rdy\le 2^n\varphi(4n).\alpha.
\end{align*} 
Hence, by using (\ref{Fefferman-Stein}) and estimating as (\ref{ineq-Chebysev}), it is clear to see that
\begin{align*}
&\omega\big(\{x\notin\Omega: |M^\Delta_{\varphi,2\eta}(\vec{f^{''}})(x)|_r>\alpha \}  \big)\le \omega\left(\{x\notin\Omega: |M^\Delta_{\varphi,\eta}(\overline{f})(x)|_r>\alpha \}  \right)
\\
&\leq \omega\left(\{x\in\mathbb R^n: |M^\Delta_{\varphi,\eta}(\overline{f})(x)|_r>\alpha \} \right)\lesssim \alpha^{-r}\|\overline{f} \|^r_{L^r_\omega(\ell^r,\mathbb R^n)} \lesssim \omega(\Omega),
\end{align*}
which leads to
\begin{align}\label{omega_f"}
&\omega\big(\{x\in\mathbb{R}^n: |M^\Delta_{\varphi,2\eta}(\vec{f^{''}})(x)|_r>\alpha \}  \big)\notag\\
&\le \omega(\Omega)+\omega\big(\{x\notin\Omega: |M^\Delta_{\varphi,2\eta}(\vec{f^{''}})(x)|_r>\alpha \}\big)\lesssim \omega(\Omega).
\end{align}
Besides that, by using (\ref{ineq-fr-alpha}), the H\"{o}lder inequality and $\omega\in A_p^{\Delta,\eta}(\varphi)$, we get
\begin{align*}
&\omega(Q_j)\le \alpha^{-p}\Big(\frac{1}{\varphi(|Q_j|)^{\eta}|Q_j|}\int_{Q_j}|\vec{f}(x)|_rdx \Big)^p.\int_{Q_j}\omega(x)dx\\
&\le \alpha^{-p}\Big(\frac{1}{\varphi(|Q_j|)^{\eta}|Q_j|} \Big)^p \Big(\int_{Q_j} |\vec{f}(x)|_r^p\omega(x)dx \Big) \Big(\int_{Q_j}\omega^{-\frac{p'}{p}}(x)dx \Big)^{\frac{p}{p'}}.\int_{Q_j}\omega(x)dx
\\
& \le \alpha^{-p}\Big( \int_{Q_j} |\vec{f}(x)|^p_r\omega(x)dx\Big) \Big(\frac{1}{\varphi(|Q_j|)^{\eta}|Q_j|} \int_{Q_j}\omega(x)dx \Big) \Big(\frac{1}{\varphi(|Q_j|)^{\eta}|Q_j|} \int_{Q_j}\omega(x)^{-\frac{1}{p-1}}dx  \Big)^{p-1}\\
&\lesssim \alpha^{-p}\Big( \int_{Q_j} |\vec{f}(x)|^p_r\omega(x)dx\Big), \text{for all} ~j\in\mathbb{N}.
\end{align*}
From the above inequality, we infer
\begin{align*}
\omega(\Omega)&=\sum\limits_{j=1}^\infty\omega(Q_j)\lesssim \alpha^{-p} \sum\limits_{j=1}^\infty \int_{Q_j} |\vec{f}(x)|^p_r\omega(x)dx=\alpha^{-p}\int_\Omega |\vec{f}(x)|^p_r\omega(x)dx
\\
&\le \alpha^{-p}\int_{\mathbb{R}^n}|\vec{f}(x)|^p_r\omega(x)dx.
\end{align*}
As an application, by (\ref{omega_f"}), the proof for the inequality (\ref{esti-omega2}) is finished. Finally, since $\vec S$ is dense in $L^p_\omega(\ell^r, \mathbb R^n)$ (see in \cite{BP1961}), the proof of the theorem is ended.
\end{proof}
%%%%%%%%%%%%%%%%%%%%%%%%%%%%%%%%%%%%%%%%%%%
As a consequence, by combining Theorem \ref{max-Delta-2eta}, Corollary \ref{Cor2.3-WZC2017} and making in the same way as Theorem \ref{max-phi}, we also obtain a necessary condition and a sufficient condition for  the class of  $A_p^{\Delta,\eta}(\varphi)$ weights. More precisely, the following is true.
\begin{theorem}
Let $1 < p < r < \infty$ and $\eta >0$. The following statements are true:
\begin{enumerate}
\item[(i)]	If $\omega\in A_p^{\Delta,\eta}(\varphi)$, then  $M^\Delta_{\varphi,2\eta}$  is a bounded operator from $L^p_\omega(\ell^r,\mathbb{R}^n)$ to $L^{p,\infty}_\omega(\ell^r,\mathbb{R}^n)$, $M^{p}_{q,\omega}(\ell^r,\mathbb{R}^n)$ and $WM^{p}_{q,\omega}(\ell^r,\mathbb{R}^n)$, respectively.
\item [(ii)] If $\omega\notin A_p^{\Delta,\eta}(\varphi)$, then  $M^\Delta_{\varphi,\eta}$  is not a bounded operator from $L^p_\omega(\ell^r,\mathbb{R}^n)$ to $WM^{p}_{q,\omega}(\ell^r,\mathbb{R}^n)$.
\end{enumerate}
\end{theorem}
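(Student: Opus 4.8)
The plan is to handle the two parts separately: part (i) will be immediate from the already-established weak-type estimate of Theorem \ref{max-Delta-2eta} combined with the embedding chain of Corollary \ref{Cor2.3-WZC2017}, while part (ii) will be proved by contraposition, through a test-function argument patterned on the implication $(4)\Rightarrow(1)$ of Theorem \ref{max-phi}, carried out over dyadic cubes and for the operator index $\eta$.

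\emph{Part (i).} By Theorem \ref{max-Delta-2eta}, $\omega\in A_p^{\Delta,\eta}(\varphi)$ forces $M^\Delta_{\varphi,2\eta}$ to be bounded from $L^p_\omega(\ell^r,\mathbb R^n)$ to $L^{p,\infty}_\omega(\ell^r,\mathbb R^n)$. Next I would observe that, for a sequence $\vec g=\{g_k\}$ and $q$ in the range $1<q<p$ in which the relevant Morrey norms are defined, each norm occurring in Corollary \ref{Cor2.3-WZC2017} is by definition $\||\vec g(\cdot)|_r\|_X$ for the corresponding scalar space $X$, so the scalar inequalities of that corollary transfer verbatim to the $\ell^r$-valued setting; in particular
\begin{align*}
\|\vec g\|_{WM^{p}_{q,\omega}(\ell^r,\mathbb R^n)}\le\|\vec g\|_{M^{p}_{q,\omega}(\ell^r,\mathbb R^n)}\le\|\vec g\|_{WM^{p}_{p,\omega}(\ell^r,\mathbb R^n)}=\|\vec g\|_{L^{p,\infty}_\omega(\ell^r,\mathbb R^n)}.
\end{align*}
Applying this with $\vec g=M^\Delta_{\varphi,2\eta}(\vec f)$ gives, in one stroke, boundedness of $M^\Delta_{\varphi,2\eta}$ from $L^p_\omega(\ell^r,\mathbb R^n)$ into $M^{p}_{q,\omega}(\ell^r,\mathbb R^n)$ and into $WM^{p}_{q,\omega}(\ell^r,\mathbb R^n)$, which is all of (i).

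\emph{Part (ii).} I would argue the contrapositive: assume $M^\Delta_{\varphi,\eta}$ is bounded from $L^p_\omega(\ell^r,\mathbb R^n)$ to $WM^{p}_{q,\omega}(\ell^r,\mathbb R^n)$ and deduce $\omega\in A_p^{\Delta,\eta}(\varphi)$. Fix a dyadic cube $Q$; for $\varepsilon>0$ put $f_\varepsilon=(\omega+\varepsilon)^{1-p'}\chi_Q$ and $\vec f_\varepsilon=(f_\varepsilon,0,0,\dots)$, so $|\vec f_\varepsilon(\cdot)|_r=f_\varepsilon(\cdot)$. Since $(\omega+\varepsilon)^{-p'}\omega\le(\omega+\varepsilon)^{1-p'}\le\varepsilon^{1-p'}$, one has $\vec f_\varepsilon\in L^p_\omega(\ell^r,\mathbb R^n)$ with $\|\vec f_\varepsilon\|_{L^p_\omega(\ell^r,\mathbb R^n)}^p=\int_Q(\omega+\varepsilon)^{-p'}\omega\,dx$. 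Because $Q$ is dyadic it is admissible in the supremum defining $M^\Delta_{\varphi,\eta}$ at any $x\in Q$, so for $0<\lambda<\frac{(\omega+\varepsilon)^{1-p'}(Q)}{\varphi(|Q|)^{\eta}|Q|}$ one gets $|M^\Delta_{\varphi,\eta}(\vec f_\varepsilon)(x)|_r=M^\Delta_{\varphi,\eta}(f_\varepsilon)(x)>\lambda$ for every $x\in Q$, whence $Q=\{x\in Q:|M^\Delta_{\varphi,\eta}(\vec f_\varepsilon)(x)|_r>\lambda\}$. Feeding this into the assumed weak bound exactly as in (\ref{omegaQ-weak}) yields $\lambda\,\omega(Q)^{1/p}\lesssim\big(\int_Q(\omega+\varepsilon)^{-p'}\omega\,dx\big)^{1/p}$.

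To finish, I would take $\lambda=\frac{(\omega+\varepsilon)^{1-p'}(Q)}{2\varphi(|Q|)^{\eta}|Q|}$, raise both sides to the $p$-th power, and use $\int_Q(\omega+\varepsilon)^{-p'}\omega\,dx\le(\omega+\varepsilon)^{1-p'}(Q)$ to cancel one factor of $(\omega+\varepsilon)^{1-p'}(Q)$, arriving at
\begin{align*}
\Big(\frac{1}{\varphi(|Q|)^{\eta}|Q|}\int_Q\omega(x)\,dx\Big)\Big(\frac{1}{\varphi(|Q|)^{\eta}|Q|}\int_Q(\omega+\varepsilon)^{-\frac{1}{p-1}}\,dx\Big)^{p-1}\lesssim1,
\end{align*}
with implied constant independent of $\varepsilon>0$ and of the dyadic cube $Q$. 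Letting $\varepsilon\to0^+$ and invoking the monotone convergence theorem then yields the $A_p^{\Delta,\eta}(\varphi)$ inequality for every dyadic cube $Q$, i.e.\ $\omega\in A_p^{\Delta,\eta}(\varphi)$. Beyond the argument of Theorem \ref{max-phi}, the only new ingredients are the systematic restriction to dyadic cubes (legitimate, since $M^\Delta_{\varphi,\eta}$ and the class $A_p^{\Delta,\eta}(\varphi)$ are both defined through dyadic cubes) and the reduction from $\ell^r$-valued to scalar test functions (costless, since a single nonzero component already detects the weight); I expect the only step needing genuine care to be the bookkeeping around the truncation $\omega+\varepsilon$ and the passage to the limit $\varepsilon\to0^+$.
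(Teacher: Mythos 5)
Your proposal is correct and follows essentially the same route as the paper, which itself only sketches the argument: part (i) by combining Theorem \ref{max-Delta-2eta} with the norm chain of Corollary \ref{Cor2.3-WZC2017} (transferred verbatim to the $\ell^r$-valued norms), and part (ii) by repeating the test-function argument of Theorem \ref{max-phi} with $f_\varepsilon=(\omega+\varepsilon)^{1-p'}\chi_Q$ over dyadic cubes. Your bookkeeping (using $\int_Q(\omega+\varepsilon)^{-p'}\omega\,dx\le(\omega+\varepsilon)^{1-p'}(Q)$ and then monotone convergence as $\varepsilon\to0^+$) is a harmless variant of the paper's cancellation and limit step and is sound.
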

%%%%%%%%%%%%%%%%%%%%%%%%%%%%%%%%%%%%%%%
%%%%%%%%%%%%%%%%%%%%%%%%%%%%%%%%%%%%%%%%%%%
\section{The results about the boundedness of sublinear operators generated by singular integrals and its commutators}\label{section4}
%%%%%%%%%%%%%%%%%%%%%%%%%%%%
%This field is the completion of the field
%of rational numbers Q with respect to the non-Archimedean p-adic norm 
%Let $\omega_1$, $\omega_2$ be two weighted functions satisfying $\sup\limits_{\rm ball\,B}\frac{\omega_2(B)}{\omega_1(B)^\kappa}<\infty$. Then, it is clear that the set of simple functions is contained in the space $\mathcal B^{p,\kappa}_{\omega_1,\omega_2}(\mathbb R^n)$. Denote by $\mathfrak{B}^{p,\kappa}_{\omega_1,\omega_2}(\ell^r,\mathbb R^n)$ the completion space of $\vec S$ with respect to the norm in $\mathcal B^{p,\kappa}_{\omega_1,\omega_2}(\ell^r,\mathbb R^n)$. 

Let us recall that the two weighted Morrey space $\mathcal B^{p,\kappa}_{\omega_1,\omega_2}(\ell^r,\mathbb R^n)$ with vector-valued functions is defined as the set of all sequences of measurable functions $\vec{f}=\{f_k\}$ such that
\begin{align*}
\|\vec{f} \|_{\mathcal B^{p,\kappa}_{\omega_1,\omega_2}(\ell^r,\mathbb R^n)}=\||\vec{f}(\cdot)|_r \|_{\mathcal B^{p,\kappa}_{\omega_1,\omega_2}(\mathbb R^n)}<\infty.
\end{align*}

It is not difficult to show that  $\mathcal B^{p,\kappa}_{\omega_1,\omega_2}(\ell^r,\mathbb R^n)$ is a Banach space. Our first main result in this section is to give the boundedness of maximal singular integral operators with the kernels proposed by Anderson and John on the space $\mathcal B^{p,\kappa}_{\omega_1,\omega_2}(\ell^r,\mathbb R^n)$. More precisely, we have the following useful result.
%%%%%%%%%%%%%%%%%%%%%%%%%%
\begin{theorem}
Let $1< r<\infty$, $1<p<\infty$, $\omega_1\in A(p,1)$, $\omega_2\in A_p$, $\delta\in (1,r_{\omega_2})$ and $0<\kappa<\frac{\delta-1}{\delta p}$. Then, $T^*$ is a bounded operator on $\mathcal B^{p,\kappa}_{\omega_1,\omega_2}(\ell^r,\mathbb R^n)$.
\end{theorem}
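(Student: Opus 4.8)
The plan is to fix an arbitrary ball $B=B(x_0,\rho)$, to bound $\frac{1}{\omega_1(B)^{\kappa}}\int_B |T^*(\vec f)(x)|_r^p\,\omega_2(x)\,dx$ uniformly in $B$, and then take the supremum. Write $\vec f=\vec g+\vec h$ where $\vec g=\vec f\,\chi_{2B}$ and $\vec h=\vec f\,\chi_{\mathbb R^n\setminus 2B}$. Since each $T^*_k$ is sublinear and $|\cdot|_r$ obeys the triangle inequality in $\ell^r$, one has $|T^*(\vec f)(x)|_r\le |T^*(\vec g)(x)|_r+|T^*(\vec h)(x)|_r$, so it suffices to control the two pieces separately.

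For the local piece, I would apply Theorem \ref{Theo-AJ1981} with the weight $\omega_2\in A_p$: $\int_B|T^*(\vec g)(x)|_r^p\omega_2(x)\,dx\le \|T^*(\vec g)\|_{L^p_{\omega_2}(\ell^r)}^p\lesssim \int_{2B}|\vec f(x)|_r^p\omega_2(x)\,dx\le \|\vec f\|_{\mathcal B^{p,\kappa}_{\omega_1,\omega_2}(\ell^r)}^p\,\omega_1(2B)^{\kappa}$. Dividing by $\omega_1(B)^{\kappa}$ and using that $\omega_1$ is doubling — which follows from Lemma \ref{Lem2.8-CHK1982}, since $\tfrac{|B|}{|2B|}\lesssim(\omega_1(B)/\omega_1(2B))^{1/p}$ forces $\omega_1(2B)\lesssim \omega_1(B)$ — bounds this piece by $C\|\vec f\|_{\mathcal B^{p,\kappa}_{\omega_1,\omega_2}(\ell^r)}^p$.

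For the global piece, fix $x\in B$. Since $\vec h$ is supported in $(2B)^c$, for every $y$ in its support $|x-y|\simeq|x_0-y|$, so the supremum over $\varepsilon>0$ is harmless; the size bound $|K_k(z)|\le A|z|^{-n}$ and Minkowski's integral inequality then give $|T^*(\vec h)(x)|_r\lesssim \int_{(2B)^c}\frac{|\vec f(y)|_r}{|x_0-y|^n}\,dy\lesssim \sum_{j=1}^{\infty}\frac{1}{|2^{j+1}B|}\int_{2^{j+1}B}|\vec f(y)|_r\,dy$ after the standard dyadic decomposition of $(2B)^c$ into annuli. Applying Proposition \ref{pro2.4DFan} (valid as $\omega_2\in A_p$) on each $2^{j+1}B$ and then the definition of the Morrey norm yields $\frac{1}{|2^{j+1}B|}\int_{2^{j+1}B}|\vec f|_r\lesssim \|\vec f\|_{\mathcal B^{p,\kappa}_{\omega_1,\omega_2}(\ell^r)}\big(\omega_1(2^{j+1}B)^{\kappa}/\omega_2(2^{j+1}B)\big)^{1/p}$, hence $|T^*(\vec h)(x)|_r\lesssim \|\vec f\|_{\mathcal B^{p,\kappa}_{\omega_1,\omega_2}(\ell^r)}\sum_{j\ge 1}\big(\omega_1(2^{j+1}B)^{\kappa}/\omega_2(2^{j+1}B)\big)^{1/p}$ uniformly for $x\in B$.

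Inserting this pointwise bound into $\frac{1}{\omega_1(B)^{\kappa}}\int_B|T^*(\vec h)|_r^p\omega_2$ reduces everything to checking that $\Big(\sum_{j\ge 1}\big(\tfrac{\omega_2(B)}{\omega_2(2^{j+1}B)}\big)^{1/p}\big(\tfrac{\omega_1(2^{j+1}B)}{\omega_1(B)}\big)^{\kappa/p}\Big)^p$ is bounded uniformly in $B$. Here I would use Proposition \ref{rever-Holder} with $\omega_2\in A_p\cap RH_\delta$ to get $\tfrac{\omega_2(B)}{\omega_2(2^{j+1}B)}\lesssim 2^{-(j+1)n(\delta-1)/\delta}$, and Lemma \ref{Lem2.8-CHK1982} with $\omega_1\in A(p,1)$ to get $\tfrac{\omega_1(2^{j+1}B)}{\omega_1(B)}\lesssim 2^{(j+1)np}$; the $j$-th summand is then $\lesssim 2^{(j+1)n(\kappa-\frac{\delta-1}{\delta p})}$, so the series converges exactly because $0<\kappa<\frac{\delta-1}{\delta p}$. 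Combining the local and global estimates and taking the supremum over balls $B$ gives $\|T^*(\vec f)\|_{\mathcal B^{p,\kappa}_{\omega_1,\omega_2}(\ell^r)}\lesssim\|\vec f\|_{\mathcal B^{p,\kappa}_{\omega_1,\omega_2}(\ell^r)}$. I expect the main obstacle to be precisely this last geometric-series estimate: one has to balance the reverse-Hölder decay supplied by $\omega_2$ against the at-most-polynomial growth permitted by $\omega_1\in A(p,1)$, and the exponents must align so that the stated range of $\kappa$ is exactly the threshold of convergence; the supremum over $\varepsilon$ defining $T^*$ and the vector-valued nature of the problem cause no additional difficulty.
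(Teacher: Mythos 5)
Your proposal is correct and follows essentially the same route as the paper's proof: the same decomposition $\vec f=\vec f\chi_{2B}+\vec f\chi_{(2B)^c}$, Theorem \ref{Theo-AJ1981} plus the doubling consequence of Lemma \ref{Lem2.8-CHK1982} for the local part, and for the tail the pointwise kernel bound, dyadic annuli, and the same geometric series controlled by Proposition \ref{rever-Holder} and Lemma \ref{Lem2.8-CHK1982} under $\kappa<\frac{\delta-1}{\delta p}$. The only cosmetic difference is that you invoke Proposition \ref{pro2.4DFan} where the paper carries out the H\"older--$A_p$ computation explicitly; these are the same estimate.
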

%%%%%%%%%%%%%%%%%%%%%%%%%
\begin{proof}
Let us choose any $\vec f\in \mathcal B^{p,\kappa}_{\omega_1,\omega_2}(\ell^r,\mathbb R^n)$ and ball $B_R(x_0):= B$. Next, we compose $\vec{f}=\vec{f}_1+\vec{f}_2$, where $\vec{f}_1=\{f_{1,k}\}$ such that $f_{1,k}(x)=f_k(x)\chi_{2B}(x)$. This implies that
\begin{align}\label{J12}
&\frac{1}{\omega_1(B)^{\kappa}}\int_B |T^*(\vec{f})(x)|_r^p\omega_2(x)dx\le \frac{1}{\omega_1(B)^{\kappa}}\int_B |T^*(\vec{f}_1)(x)|_r^p\omega_2(x)dx +\notag\\
&+ \frac{1}{\omega_1(B)^{\kappa}}\int_B |T^*(\vec{f}_2)(x)|_r^p\omega_2(x)dx:=J_1+J_2.
\end{align}
By Theorem \ref{Theo-AJ1981} and Lemma \ref{Lem2.8-CHK1982}, we have
\begin{align}\label{J1}
J_1&\le \frac{1}{\omega_1(B)^{\kappa}}\int_{\mathbb{R}^n} |T^*(\vec{f}_1)(x)|_r^p\omega_2(x)dx\lesssim \frac{1}{\omega_1(B)^{\kappa}}\int_{2B} |\vec{f}(x)|_r^p\omega_2(x)dx\notag\\
&\le \frac{\omega_1(2B)^{\kappa}}{\omega_1(B)^{\kappa}}\|\vec{f}\|^p_{\mathcal{B}^{p,\kappa}_{\omega_1,\omega_2}(\ell^r,\mathbb{R}^n)}\lesssim \|\vec{f}\|^p_{\mathcal{B}^{p,\kappa}_{\omega_1,\omega_2}(\ell^r,\mathbb{R}^n)}.
\end{align}
Now, for $x \in B$ and $y \in (2B)^c$, it is clear to see that $2R\leq |x_0 - y| \le 2|x - y|$. From this, we get
$$
|T^*(f_{2,k})(x)|\leq \int_{(2B)^c}\frac{A}{|x-y|^n}|f_k(y)|dy\lesssim \int_{(2B)^c}\frac{1}{|x_0-y|^n}|f_k(y)|dy,$$  for all $k\in\mathbb N$. Hence, by using the Minkowski inequality and the H\"{o}lder inequality and assuming $\omega_2\in A_p$, we obtain
\begin{align}
&|T^*(\vec f_2)(x)|_r\lesssim \int_{(2B)^c}\frac{|\vec f(y)|_r}{|x_0-y|^n}dy=\sum\limits_{j=1}^{\infty}\,\int_{2^{j}R\leq |x_0-y|<2^{j+1}R}\frac{|\vec f(y)|_r}{|x_0-y|^n}dy\nonumber
\\
&\lesssim \sum\limits_{j=1}^{\infty}\frac{1}{|2^jB|}\int_{2^{j+1}B}|\vec f(y)|_rdy\leq \sum\limits_{j=1}^{\infty}\frac{1}{|2^jB|}\Big(\int_{2^{j+1}B}|\vec f(y)|_r^p\omega_2(y)dy\Big)^{\frac{1}{p}}\Big(\int_{2^{j+1}B}\omega_2(y)^{1-p'}dy\Big)^{\frac{p-1}{p}}\nonumber
\\
&\lesssim \sum\limits_{j=1}^{\infty}\frac{1}{|2^jB|}\Big(\int_{2^{j+1}B}|\vec f(y)|_r^p\omega_2(y)dy\Big)^{\frac{1}{p}}\frac{|2^{j+1}B|}{\omega_2(2^{j+1}B)^{\frac{1}{p}}}\lesssim \|\vec f\|_{\mathcal B^{p,\kappa}_{\omega_1,\omega_2}(\ell^r,\mathbb R^n)}\sum\limits_{j=1}^{\infty}\frac{\omega_1(2^{j+1}B)^{\frac{\kappa}{p}}}{\omega_2(2^{j+1}B)^{\frac{1}{p}}}.\nonumber
\end{align}
Thus, 
\begin{align}
J_2&\lesssim \frac{\omega_2(B)}{\omega_1(B)^{\kappa}} \|\vec f\|^p_{\mathcal B^{p,\kappa}_{\omega_1,\omega_2}(\ell^r,\mathbb R^n)}\Big(\sum\limits_{j=1}^{\infty}\frac{\omega_1(2^{j+1}B)^{\frac{\kappa}{p}}}{\omega_2(2^{j+1}B)^{\frac{1}{p}}}\Big)^{p}=\|\vec f\|^p_{\mathcal B^{p,\kappa}_{\omega_1,\omega_2}(\ell^r,\mathbb R^n)}. \mathcal K^p,\nonumber
\end{align}
where 
$$
\mathcal K= \sum\limits_{j=1}^{\infty}\frac{\omega_1(2^{j+1}B)^{\frac{\kappa}{p}}}{\omega_1(B)^{\frac{\kappa}{p}}}.\frac{\omega_2(B)^{\frac{1}{p}}}{\omega_2(2^{j+1}B)^{\frac{1}{p}}}.
$$
Next, by applying Lemma \ref{Lem2.8-CHK1982}, we have
$
\big(\frac{\omega_1(2^{j+1}B)}{\omega_1(B)}\big)^{\frac{\kappa}{p}}\lesssim \big(\frac{|2^{j+1}B|}{|B|}\big)^{\kappa}\lesssim 2^{{(j+1)n\kappa}}.
$
On the other hand, by using Proposition \ref{rever-Holder}, we infer
$$
\Big(\frac{\omega_2(B)}{\omega_2(2^{j+1}B)}\Big)^{\frac{1}{p}}\lesssim \Big(\frac{|B|}{|2^{j+1}B|}\Big)^{\frac{(\delta-1)}{\delta.p}}\lesssim 2^{\frac{-(j+1)n(\delta-1)}{\delta p}}.
$$
Hence, by $\kappa<\frac{\delta-1}{\delta p}$, one has
$
\mathcal K\lesssim \sum\limits_{j=1}^{\infty} 2^{(j+1)n(\kappa-\frac{\delta-1}{\delta p})}<\infty.
$
Thus,
$$
J_2\lesssim \|\vec f\|^p_{\mathcal B^{p,\kappa}_{\omega_1,\omega_2}(\ell^r,\mathbb R^n)}.
$$
Combining this with (\ref{J12}) and (\ref{J1}) above, we obtain
$$
\|T^*(\vec f)\|_{\mathcal B^{p,\kappa}_{\omega_1,\omega_2}(\ell^r, \mathbb R^n)}\lesssim \|\vec f\|_{\mathcal B^{p,\kappa}_{\omega_1,\omega_2}(\ell^r, \mathbb R^n)},\,\textit{\rm for all }\, \vec f\in \mathcal B^{p,\kappa}_{\omega_1,\omega_2}(\ell^r, \mathbb R^n),
$$
which implies the proof of the theorem is finished.
\end{proof}
%%%%%%%%%%%%%%%%%%%%%%%%%%%%%%
%%%%%%%%%%%%%%%%%%%%%%%%%%%%%%
\vskip 5pt
Our second main result in this section is to establish the boundedness of sublinear operators  generated by strongly singular operators on certain weighted Morrey spaces. As an application, we obtain the boundedness of some strongly singular integral operators on the weighted Morrey spaces. 

Let us recall the definition of the weighted central Morrey spaces. Let $1 \le q < \infty, 0 < \kappa < 1$ and  $\omega$  be a weighted function. Then the weighted central Morrey spaces is defined as the set of all functions in $L^q_{{\rm loc}}(\mathbb{R}^n)$ such that
\begin{align*}
\|f\|_{\mathcal{{\mathop B\limits^.}}^{q,\kappa}(\omega, \mathbb{R}^n)}=\sup\limits_{R>0} \Big(\frac{1}{\omega(B_R(0))^{\kappa}}\int_{B_R(0)}|f(x)|^qdx \Big)^{\frac{1}{q}}<\infty.
\end{align*}
It is evident that $\mathcal{{\mathop B\limits^.}}^{q,\kappa}(\omega, \mathbb{R}^n)$ is a Banach space. We denote by $\mathfrak{\mathop B\limits^.}^{q,\kappa}(\omega, \mathbb{R}^n)$ the closure of $L^{q}(\mathbb R^n)\cap\mathcal{{\mathop B\limits^.}}^{q,\kappa}(\omega, \mathbb{R}^n)$ with respect to the norm in $\mathcal{{\mathop B\limits^.}}^{q,\kappa}(\omega, \mathbb{R}^n)$.
% For convenience, we write $\mathfrak{\mathop B\limits^.}^{q,\kappa}(\omega, \mathbb{R}^n):=\mathfrak{\mathop B\limits^.}^{q,\kappa}( \mathbb{R}^n)$ and $\mathcal{{\mathop B\limits^.}}^{q,\kappa}(\omega, \mathbb{R}^n)=\mathcal{{\mathop B\limits^.}}^{q,\kappa}( \mathbb{R}^n)$ when $\omega$ is a constant function.

We also recall that the central weighted local Morrey spaces $\mathcal{\mathop B\limits^.}^{q,\kappa}_{\rm loc}(\omega,\mathbb{R}^n)$ as  the set of all functions in $L^q_{{\rm loc}}(\mathbb{R}^n)$ such that
\begin{align*}
\|f\|_{\mathcal{{\mathop B\limits^.}}^{q,\kappa}_{\rm loc}(\omega, \mathbb{R}^n)}=\sup\limits_{0<R<1} \Big(\frac{1}{\omega(B_R(0))^{\kappa}}\int_{B_R(0)}|f(x)|^qdx \Big)^{\frac{1}{q}}<\infty.
\end{align*}
%In the case, $\omega$ is a constant function, denote $\mathcal{\mathop B\limits^.}^{q,\kappa}_{\rm loc}(\mathbb{R}^n):=\mathcal{\mathop B\limits^.}^{q,\kappa}_{\rm loc}(\omega,\mathbb{R}^n)$.
\begin{theorem}\label{Theo-sublinear1}
Let $1<p<\infty$, $\lambda>0$, $0<\kappa<1$, and $\omega(x)=|x|^{\beta}$ for $-n+\frac{\lambda p}{\kappa}<\beta< \frac{\lambda p +(1-\kappa)n}{\kappa}$ and $\kappa_1 \in (0,\kappa-\frac{\lambda p}{n+\beta}]$. Then, the following is true:

{\rm(i)} If $\mathcal{T}$ extends to a bounded operator on $L^p(\mathbb R^n)$, then $\mathcal{T}$ can also extend to a bounded operator from $\mathfrak{\mathop B\limits^.}^{p,\kappa}(\omega,\mathbb R^n)$ to $\mathcal{{\mathop B\limits^.}}^{p,\kappa_1}_{\rm loc}(\omega, \mathbb{R}^n)$.

{\rm(ii)}  Let $b \in L^{\eta}_{\rm loc}(\mathbb R^n)\cap BMO(\mathbb R^n)$ with $\eta >p'$. If the commutator $[b, \mathcal{T}]$ extends to a bounded operator on $L^p(\mathbb R^n)$, then it can also extend to a bounded operator from $\mathfrak{\mathop B\limits^.}^{p,\kappa}(\omega,\mathbb R^n)$ to $\mathcal{{\mathop B\limits^.}}^{p,\kappa_1}_{\rm loc}(\omega, \mathbb{R}^n)$.
\end{theorem}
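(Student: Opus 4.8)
The plan is to prove the \emph{a priori} estimate $\|\mathcal T f\|_{\mathcal{{\mathop B\limits^.}}^{p,\kappa_1}_{\rm loc}(\omega,\mathbb R^n)}\lesssim \|f\|_{\mathcal{{\mathop B\limits^.}}^{p,\kappa}(\omega,\mathbb R^n)}$ for $f\in L^p(\mathbb R^n)\cap\mathcal{{\mathop B\limits^.}}^{p,\kappa}(\omega,\mathbb R^n)$ and then to extend $\mathcal T$ to all of $\mathfrak{\mathop B\limits^.}^{p,\kappa}(\omega,\mathbb R^n)$ by density, the target space being complete. Fix $0<R<1$, put $B=B_R(0)$, abbreviate $\|f\|:=\|f\|_{\mathcal{{\mathop B\limits^.}}^{p,\kappa}(\omega,\mathbb R^n)}$, and split $f=f_1+f_2$ with $f_1=f\chi_{2B}$, $f_2=f\chi_{(2B)^c}$, so that $\omega(B)^{-\kappa_1}\int_B|\mathcal T f|^p\,dx\lesssim J_1+J_2$ with $J_i=\omega(B)^{-\kappa_1}\int_B|\mathcal T f_i|^p\,dx$. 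Two facts are used throughout: the power-weight identity \eqref{ineq-power}, i.e.\ $\omega(B_\rho(0))\simeq\rho^{n+\beta}$, and the trivial inequality $\int_{B_\rho(0)}|f|^p\le\omega(B_\rho(0))^{\kappa}\|f\|^p$. For $J_1$ I would use only the $L^p$-boundedness of $\mathcal T$: $\int_B|\mathcal T f_1|^p\le\|\mathcal T f_1\|_{L^p(\mathbb R^n)}^p\lesssim\int_{2B}|f|^p\lesssim\omega(2B)^{\kappa}\|f\|^p$, hence $J_1\lesssim\omega(2B)^{\kappa}\omega(B)^{-\kappa_1}\|f\|^p\simeq R^{(n+\beta)(\kappa-\kappa_1)}\|f\|^p\lesssim\|f\|^p$, using $0<R<1$, $n+\beta>\lambda p/\kappa>0$ and $\kappa-\kappa_1\ge\lambda p/(n+\beta)>0$.

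For $J_2$, observe that $x\in B$ lies outside $\mathrm{supp}\,f_2$ and that $|x-y|\simeq|y|$ for $x\in B$, $y\in(2B)^c$. Using the size estimate \eqref{ineq-sub}, a dyadic-annulus decomposition of $(2B)^c$, and Hölder's inequality with exponents $p,p'$, I would obtain
\[
|\mathcal T f_2(x)|\lesssim\sum_{j\ge1}\frac{1}{(2^jR)^{n+\lambda}}\int_{B_{2^{j+1}R}(0)}|f|\lesssim\|f\|\sum_{j\ge1}\frac{(2^{j+1}R)^{\frac{(n+\beta)\kappa}{p}+\frac{n}{p'}}}{(2^jR)^{n+\lambda}}.
\]
The exponent of $2^j$ here equals $\theta:=\bigl((n+\beta)\kappa-n-\lambda p\bigr)/p$, and $\theta<0$ precisely when $\beta<\bigl(\lambda p+(1-\kappa)n\bigr)/\kappa$; so the series converges and $|\mathcal T f_2(x)|\lesssim R^{\theta}\|f\|$ uniformly in $x\in B$. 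Hence $J_2\lesssim\omega(B)^{-\kappa_1}|B|\,R^{\theta p}\|f\|^p\simeq R^{(n+\beta)(\kappa-\kappa_1)-\lambda p}\|f\|^p$, whose exponent is $\ge0$ exactly when $\kappa_1\le\kappa-\lambda p/(n+\beta)$, so that $J_2\lesssim\|f\|^p$ for $0<R<1$. Taking the supremum over $0<R<1$ gives (i), which passes to $\mathfrak{\mathop B\limits^.}^{p,\kappa}(\omega,\mathbb R^n)$ by density.

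For (ii) I would run the same scheme for $[b,\mathcal T]$ with \eqref{ineq-sub-com}: $J_1$ is handled by the assumed $L^p$-boundedness of $[b,\mathcal T]$, while in $J_2$ the extra factor $|b(x)-b(y)|$ on the annulus $2^jR\le|y|<2^{j+1}R$ is split as $|b(x)-b_{B_{2^{j+1}R}}|+|b(y)-b_{B_{2^{j+1}R}}|$. The $y$-part is absorbed by Hölder's inequality together with Lemma \ref{BMO-Lemma} and the hypothesis $b\in L^{\eta}_{\rm loc}$, $\eta>p'$ (which ensures the local integrals produced by Hölder are finite and $[b,\mathcal T]f$ is defined on the dense class); the $x$-part is further broken as $|b(x)-b_{2B}|+|b_{2B}-b_{B_{2^{j+1}R}}|$, the first summand handled by Lemma \ref{BMO-Lemma} on $B$ and the second by Proposition \ref{Pro-T1986}, which contributes a factor $\lesssim(j+1)\|b\|_{BMO(\mathbb R^n)}$. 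Each dyadic sum then picks up at most a polynomial-in-$j$ weight, still summable because the geometric decay $2^{j\theta}$, $\theta<0$, is strict; the powers of $R$ coincide with those of part (i), so the same ranges of $\beta$ and $\kappa_1$ work, and the density step finishes the proof.

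The main obstacle is the exponent bookkeeping in $J_2$: one has to verify that convergence of the dyadic series forces exactly $\beta<\bigl(\lambda p+(1-\kappa)n\bigr)/\kappa$ (with the strict inequality leaving room, in the commutator case, to swallow the polynomial factor from Proposition \ref{Pro-T1986}), that boundedness of the resulting power of $R$ as $R\to0^+$ forces exactly $\kappa_1\le\kappa-\lambda p/(n+\beta)$, and that the lower restriction $\beta>-n+\lambda p/\kappa$ is precisely what keeps this interval of admissible $\kappa_1$ nonempty (and $n+\beta>0$, so \eqref{ineq-power} applies). Once these numerology checks are made, and the density of $L^p(\mathbb R^n)\cap\mathcal{{\mathop B\limits^.}}^{p,\kappa}(\omega,\mathbb R^n)$ in $\mathfrak{\mathop B\limits^.}^{p,\kappa}(\omega,\mathbb R^n)$ is invoked (which holds by definition of the latter), the remaining estimates are routine.
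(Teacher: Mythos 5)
Your proposal follows essentially the same route as the paper's proof: the same splitting $f=f_1+f_2$ at $2B$ with $x_0=0$ and $0<R<1$, the assumed $L^p$-boundedness for the local piece, the dyadic-annulus decomposition plus H\"{o}lder estimate for the far piece with exactly the paper's exponent bookkeeping in $R$ and $2^j$, and, for the commutator, the same BMO splitting handled by Lemma \ref{BMO-Lemma} and Proposition \ref{Pro-T1986} with the polynomial factor $(j+1)$ absorbed by the strict geometric decay. The only step you gloss over is applying the size conditions (\ref{ineq-sub}) and (\ref{ineq-sub-com}) directly to $f_2$, which is neither smooth nor compactly supported, whereas these bounds are assumed only for $C^\infty_c(\mathbb R^n)$ (extended in the proof to $L^p_{\rm comp}(\mathbb R^n)$); the paper fills this in at (\ref{Nakai}) by approximating $f_2$ in $L^p(\mathbb R^n)$ by the truncations $g_m=f\chi_{(2B)^c\cap 2mB}$, extracting via $L^p$-boundedness a subsequence with $\mathcal T(g_m)\to\mathcal T(f_2)$ a.e., and passing to the limit in the pointwise bound. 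With that routine addition your argument coincides with the paper's.
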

%%%%%%%%%%%%%%%%%%%%%%%%%%
\begin{proof} It is sufficient to prove the theorem for all $f\in L^p(\mathbb R^n)\cap\mathcal{{\mathop B\limits^.}}^{p,\kappa}(\omega, \mathbb{R}^n)$.

(i) By fixing a ball $B_R(x_0):=B$ (for $x_0=0$), with $0<R<1$ and decomposing $f=f_1+f_2$, where $f_1= f.\chi_{2B}$, one has
\begin{align}\label{I12-strong}
\frac{1}{\omega(B)^{\kappa_1}}\int_{B}|\mathcal{T}(f)(x)|^pdx &\leq \frac{1}{\omega(B)^{\kappa_1}}\int_{B}|\mathcal{T}(f_1)(x)|^pdx +\nonumber
\\
&\,\,\,+\frac{1}{\omega(B)^{\kappa_1}}\int_{B}|\mathcal{T}(f_2)(x)|^pdx:=I_1+I_2.
\end{align}
To estimate $I_1$, by $\mathcal{T}$ extends to a bounded operator on $L^p(\mathbb R^n)$ and the inequality (\ref{ineq-power}), we have
\begin{align}\label{I1-strong}
I_1&\leq  \frac{1}{\omega(B)^{\kappa_1}}\int_{\mathbb R^n}|\mathcal{T}(f_1)(x)|^pdx \lesssim \frac{1}{\omega(B)^{\kappa_1}}\int_{2B}|f(x)|^pdx \leq \frac{\omega(2B)^{\kappa}}{\omega(B)^{\kappa_1}}\|f\|^p_{\mathcal {\mathop B\limits^.}^{p,\kappa}(\omega,\mathbb R^n)}\nonumber
\\
&\lesssim R^{(n+\beta)(\kappa-\kappa_1)}\|f\|^p_{\mathcal {\mathop B\limits^.}^{p,\kappa}(\omega,\mathbb R^n)}\leq \|f\|^p_{\mathcal {\mathop B\limits^.}^{p,\kappa}(\omega,\mathbb R^n)}.
\end{align}
On the other hand, by $f_2\in L^{p}(\mathbb R^n)$, one has that $g_m =f.\chi_{(2B)^c\cap {(2mB)}}\to f_2$ in $L^p(\mathbb R^n)$. Thus,  by $\mathcal{T}$  bounded on $L^p(\mathbb R^n)$ again, there exists a subsequence $(\mathcal T(g_{m_k}))$-denoted by $(\mathcal T(g_m))$ such that $\mathcal T(g_m)\to \mathcal T(f_2)$ a.e on $\mathbb R^n$. From this, by having $\mathcal{T}$ still satisfies (\ref{ineq-sub}) on $L^{p}_{\rm {comp}} (\mathbb R^n)$ and letting $x\in B$ with $m$ large enough, we obtain
\begin{align}\label{Nakai}
|{\mathcal{T}}{(f_2)}(x)|=\mathop{\rm lim}\limits_{m\to \infty} |{\mathcal{T}}{(g_m)}(x)| \lesssim \mathop{\rm lim}\limits_{m\to \infty} \int_{\mathbb R^n}\frac{|g_m(y)|}{|x-y|^{n+\lambda}}dy= \int_{(2B)^c}\frac{|f(y)|}{|x-y|^{n+\lambda}}dy.
\end{align}
Notice that let $x\in B$ and $y\in (2B)^c$, we have $2R\leq |x_0 - y|\leq 2|x - y|$. This implies that
\begin{align}\label{Tf2}
|\mathcal{T}(f_{2})(x)|&\lesssim \int_{(2B)^c}\frac{1}{|x_0-y|^{n+\lambda}}|f(y)|dy=\sum\limits_{j=1}^{\infty}\,\int_{2^{j}R\leq |x_0-y|<2^{j+1}R}\frac{|f(y)|}{|x_0-y|^{n+\lambda}}dy\nonumber
\\
&\lesssim \sum\limits_{j=1}^{\infty}\frac{1}{|2^jB|^{(1+\frac{\lambda}{n})}}\int_{2^{j+1}B}|f(y)|dy.
\end{align}
From this, by the H\"{o}lder inequality, we deduce
\begin{align}\label{pre-I2}
|\mathcal{T}(f_{2})(x)|&\lesssim \sum\limits_{j=1}^{\infty}\frac{1}{|2^jB|^{(1+\frac{\lambda}{n})}}\Big(\int_{2^{j+1}B}|f(y)|^pdy\Big)^{\frac{1}{p}}|2^{j+1}B|^{\frac{1}{p'}}.\nonumber
\\
&\lesssim \|f\|_{\mathcal {\mathop B\limits^.}^{p,\kappa}(\omega,\mathbb R^n)}\sum\limits_{j=1}^{\infty}\frac{\omega(2^{j+1}B)^{\frac{\kappa}{p}}|2^{j+1}B|^{\frac{1}{p'}}}{|2^{j}B|^{(1+\frac{\lambda}{n})}}.
\end{align}
As a consequence, by (\ref{ineq-power}), we give
\begin{align}\label{I2-strong}
I_2&\lesssim \frac{|B|}{\omega(B)^{\kappa_1}} \|f\|^p_{\mathcal {\mathop B\limits^.}^{p,\kappa}(\omega,\mathbb R^n)}\Big(\sum\limits_{j=1}^{\infty}\frac{\omega(2^{j+1}B)^{\frac{\kappa}{p}}.|2^{j+1}B|^{\frac{1}{p'}}}{|2^{j}B|^{(1+\frac{\lambda}{n})}}\Big)^{p}\nonumber
\\
&\lesssim \|f\|^p_{\mathcal {\mathop B\limits^.}^{p,\kappa}(\omega, \mathbb R^n)}\Big(\sum\limits_{j=1}^{\infty}2^{j(\frac{\kappa(n+\beta)-n}{p}-\lambda)}.R^{\frac{(n+\beta)(\kappa-\kappa_1)}{p}-\lambda}\Big)^p\lesssim \|f\|^p_{\mathcal {\mathop B\limits^.}^{p,\kappa}(\omega,\mathbb R^n)}.
\end{align}
Therefore, by (\ref{I12-strong}) and (\ref{I1-strong}), we immediately have
$$
\|\mathcal{T}(f)\|_{\mathcal {\mathop B\limits^.}^{p,\kappa_1}_{\rm loc}(\omega,\mathbb R^n)}\lesssim \|f\|_{\mathcal {\mathop B\limits^.}^{p,\kappa}(\omega,\mathbb R^n)},\,\textit{\rm for all}\, f\in L^p(\mathbb R^n)\cap\mathcal {\mathop B\limits^.}^{p,\kappa}(\omega,\mathbb R^n), 
$$
which gives that the proof of part (i) is ended.

(ii) As the proof of part (i) above, we also fix a ball $B_R(x_0):=B$ (for $x_0=0$) with $0<R<1$, and write $f=f_1+f_2$ with $f_1= f.\chi_{2B}$. Thus, we get
\begin{align}\label{K12-strong}
&\frac{1}{\omega(B)^{\kappa_1}}\int_{B}|[b,\mathcal{T}](f)(x)|^pdx \leq \frac{1}{\omega(B)^{\kappa_1}}\int_{B}|[b,\mathcal{T}](f_1)(x)|^pdx +\nonumber
\\
&\,\,\,+\frac{1}{\omega(B)^{\kappa_1}}\int_{B}|[b,\mathcal{T}](f_2)(x)|^pdx:=K_1+K_2.
\end{align}
Next, by $[b, \mathcal{T}]$ extends to a bounded operator on $L^p(\mathbb R^n)$ and the relation (\ref{ineq-power}) again, we obtain
\begin{align}\label{K1-strong}
K_1&\leq  \frac{1}{\omega(B)^{\kappa_1}}\int_{\mathbb R^n}|[b,\mathcal{T}](f_1)(x)|^pdx \lesssim \frac{\|b\|^p_{BMO(\mathbb R^n)}}{\omega(B)^{\kappa_1}}\int_{2B}|f(x)|^pdx
\nonumber
\\
&\leq \frac{\omega(2B)^{\kappa}.\|b\|^p_{BMO(\mathbb R^n)}}{\omega(B)^{\kappa_1}}\|f\|^p_{\mathcal {\mathop B\limits^.}^{p,\kappa}(\omega,\mathbb R^n)}\lesssim \|b\|^p_{BMO(\mathbb R^n)}.\|f\|^p_{\mathcal {\mathop B\limits^.}^{p,\kappa}(\omega,\mathbb R^n)}.
\end{align}
Next, by $b\in L^{\eta}_{\rm loc}(\mathbb R^n)$ with $\eta> p'$ and the inequality (\ref{ineq-sub-com}), we get
\begin{align}
\left|[b,\mathcal{T}](g)(x)\right|\lesssim \int_{\mathbb R^n}\frac{|g(y)|.|b(x)-b(y)|}{|x-y|^{n+\lambda}}dy, \text{\; a.e \;} x\not\in \text{supp}{(g)},\forall \, g\in L^{p}_{\rm comp}(\mathbb R^n).\nonumber
\end{align}
Thus, by estimating as (\ref{Nakai}) above and letting $x\in B$ and $y\in (2B)^c$, we have
\begin{align}
|[b,\mathcal{T}](f_{2})(x)|&\lesssim \int_{(2B)^c}\frac{1}{|x-y|^{n+\lambda}}|f(y)|.|b(x)-b(y)|dy
\nonumber
\\
&\leq \int_{(2B)^c}\frac{1}{|x_0-y|^{n+\lambda}}|f(y)|.|b(x)-b(y)|dy\nonumber
\\
&\leq \Big(\int_{(2B)^c}\frac{|f(y)|}{|x_0-y|^{n+\lambda}}dy\Big)|b(x)-b_B|+\int_{(2B)^c}\frac{|f(y)|.|b_B-b(y)|}{|x_0-y|^{n+\lambda}}dy.\nonumber
\end{align}
This leads to that
\begin{align}\label{K21-K22}
K_2&\lesssim \frac{1}{\omega(B)^{\kappa_1}}\Big(\int_{(2B)^c}\frac{|f(y)|}{|x_0-y|^{n+\lambda}}dy\Big)^p.\Big(\int_{B}|b(x)-b_B|^pdx\Big)+\nonumber
\\
&\,\,\,+\frac{|B|}{\omega(B)^{\kappa_1}}\Big(\int_{(2B)^c}\frac{|f(y)|.|b_B-b(y)|}{|x_0-y|^{n+\lambda}}dy\Big)^p :=K_{2,1}+K_{2,2}.
\end{align}
For the term $K_{2,1}$, by using (\ref{Tf2}),   (\ref{pre-I2}), (\ref{I2-strong}) and Lemma \ref{BMO-Lemma}, we infer
\begin{align}\label{K21-strong}
K_{2,1}&\lesssim \frac{|B|}{\omega(B)^{\kappa_1}}\|f\|^p_{\mathcal {\mathop B\limits^.}^{p,\kappa}(\omega,\mathbb R^n)}\Big(\sum\limits_{j=1}^{\infty}\frac{\omega(2^{j+1}B)^{\frac{\kappa}{p}}.|2^{j+1}B|^{\frac{1}{p'}}}{|2^{j}B|^{(1+\frac{\lambda}{n})}}\Big)^{p}\Big(\frac{1}{|B|}\int_{B}|b(x)-b_B|^pdx\Big)\nonumber
\\
&\lesssim \|f\|^p_{\mathcal {\mathop B\limits^.}^{p,\kappa}(\omega,\mathbb R^n)}.\|b\|^p_{BMO(\mathbb R^n)}.
\end{align}
For the term $K_{2,2}$, by the H\"{o}lder inequality, we have
\begin{align}
K_{2,2}&\lesssim \frac{|B|}{\omega(B)^{\kappa_1}}\Big(\sum\limits_{j=1}^{\infty}\frac{1}{|2^jB|^{(1+\frac{\lambda}{n})}}\int_{2^{j+1}B}|f(y)|.|b_B-b(y)|dy\Big)^p\nonumber
\\
&\leq\frac{|B|}{\omega(B)^{\kappa_1}}\Big(\sum\limits_{j=1}^{\infty}\frac{1}{|2^jB|^{(1+\frac{\lambda}{n})}}\Big(\int_{2^{j+1}B}|f(y)|^pdy\Big)^{\frac{1}{p}}.\Big(\int_{2^{j+1}B}|b_B-b(y)|^{p'}dy\Big)^{\frac{1}{p'}}\Big)^p\nonumber
\\
&\lesssim \frac{|B|}{\omega(B)^{\kappa_1}}\Big(\sum\limits_{j=1}^{\infty}\frac{\omega(2^{j+1}B)^{\frac{\kappa}{p}}}{|2^jB|^{(1+\frac{\lambda}{n})}}(L_{1,i}+L_{2,i})\Big)^p\|f\|^p_{\mathcal {\mathop B\limits^.}^{p,\kappa}(\omega,\mathbb R^n)},\nonumber
\end{align}
where $L_{1,i}=\Big(\int_{2^{j+1}B}|b(y)-b_{2^{j+1}B}|^{p'}dy\Big)^{\frac{1}{p'}}$ and $L_{2,i}=\Big(\int_{2^{j+1}B}|b_B-b_{2^{j+1}B}|^{p'}dy\Big)^{\frac{1}{p'}}$.
On the other hand, by Lemma \ref{BMO-Lemma} and Proposition \ref{Pro-T1986}, we also get
$$
L_{1,i}\leq \|b\|_{BMO(\mathbb R^n)}.|2^{j+1}B|^{\frac{1}{p'}}
$$
and
$$
L_{2,i}\leq \Big(\int_{2^{j+1}B}\Big(2^n(j+1)\|b\|_{BMO(\mathbb R^n)}\Big)^{p'}dy\Big)^{\frac{1}{p'}}\leq 2^n(j+1).\|b\|_{BMO(\mathbb R^n)}.|2^{j+1}B|^{\frac{1}{p'}}.
$$
Thus, by estimating as (\ref{I2-strong}) above, we immediately have
\begin{align}
K_{2,2}&\lesssim \frac{|B|}{\omega(B)^{\kappa_1}}\Big(\sum\limits_{j=1}^{\infty}\frac{(j+2).\omega(2^{j+1}B)^{\frac{\kappa}{p}}.|2^{j+1}B|^{\frac{1}{p'}}}{|2^jB|^{(1+\frac{\lambda}{n})}}\Big)^p\|f\|^p_{\mathcal B^{p,\kappa}(\omega,\mathbb R^n)}.\|b\|^p_{BMO(\mathbb R^n)}\nonumber
\\
&\lesssim \Big(\sum\limits_{j=1}^{\infty}(j+2)2^{j(\frac{\kappa(n+\beta)-n}{p}-\lambda)}\Big)^p.\|f\|^p_{\mathcal B^{p,\kappa}(\omega,\mathbb R^n)}.\|b\|^p_{BMO(\mathbb R^n)}\nonumber
\\
&\lesssim \|f\|^p_{\mathcal {\mathop B\limits^.}^{p,\kappa}(\omega,\mathbb R^n)}.\|b\|^p_{BMO(\mathbb R^n)}.\nonumber 
\end{align}
From the above estimation, by (\ref{K12-strong})-(\ref{K21-strong}), we confirm
$$
\|[b,\mathcal{T}](f)\|_{\mathcal {\mathop B\limits^.}^{p,\kappa_1}_{\rm loc}(\omega,\mathbb R^n)}\lesssim \|b\|_{BMO(\mathbb R^n)}.\|f\|_{\mathcal {\mathop B\limits^.}^{p,\kappa}(\omega,\mathbb R^n)},\,\textit{\rm for all}\, f\in L^p(\mathbb R^n)\cap \mathcal {\mathop B\limits^.}^{p,\kappa}(\omega,\mathbb R^n).
$$
Therefore, the proof of this theorem is completed.
\end{proof}
%%%%%%%%%%%%%%%%%%%%%%%%%%%%%%%%%%
Now, let us give some applications of Theorem \ref{Theo-sublinear1}. Note that Hirschman \cite{Hirschman1959},  Wainger \cite{Wainger1965}, Cho and Yang \cite{CY2010} studied the strongly singular convolution operators in the context of $L^p(\mathbb R^n)$ spaces defined as follows.
\begin{definition} 
Let $0 < s < \infty$ and $0 < \lambda < \frac{ns}{2}$. The strongly singular integral operator $T^{s,\lambda}$ is defined	by
\begin{align*}
T^{s,\lambda}(f)(x)=p.v.\int_{\mathbb{R}^n}\frac{e^{i|x-y|^{-s}}}{|x-y|^{n+\lambda}}\chi_{\{|x-y|<1 \}}f(y)dy.
\end{align*}
\end{definition}
%%%%%%%%%%%%%%%%%%%%%%%%%%%%%%%%%%%%
\begin{theorem}\label{Theo-strong}
{\rm (see in \cite{Fefferman1970, Hirschman1959, Wainger1965})} Let $0<s<\infty$, $1<p<\infty$, $0<\lambda<\frac{ns}{2}$, $|\frac{1}{p}-\frac{1}{2}|<\frac{1}{2}-\frac{\lambda}{ns}$. Then $T^{s,\lambda}$ extends to a bounded operator from $L^p(\mathbb R^n)$ to itself.
\end{theorem}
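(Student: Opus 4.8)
The plan is to prove Theorem \ref{Theo-strong} by the classical argument of Hirschman, Wainger and Fefferman: decompose the kernel dyadically about its singularity at the origin, estimate each piece on $L^2$ by stationary phase, estimate it trivially on $L^1$, and then interpolate and dualize. Write the convolution kernel as $K(x)=\dfrac{e^{i|x|^{-s}}}{|x|^{n+\lambda}}\chi_{\{|x|<1\}}$, fix a smooth partition of unity $1=\sum_{j\ge 0}\psi_j$ subordinate to the annuli $\{|x|\sim 2^{-j}\}$ (so that in particular the piece near $|x|=1$ is bounded and compactly supported), put $K_j=K\psi_j$ and $T_jf=f*K_j$. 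Each $K_j\in L^1(\mathbb R^n)$, so each $T_j$ is bounded on every $L^p$, the series $\sum_{j\ge 0}T_j$ converges to $T^{s,\lambda}$ in operator norm whenever it converges, and this also makes the principal-value definition rigorous. Hence it suffices to show $\sum_{j\ge 0}\|T_j\|_{L^p\to L^p}<\infty$ for $p$ in the stated range.

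For the $L^2$ estimate one has $\|T_j\|_{L^2\to L^2}=\|\widehat{K_j}\|_{L^\infty}$ by Plancherel, where
\[
\widehat{K_j}(\xi)=\int_{|x|\sim 2^{-j}}\frac{\psi_j(x)}{|x|^{n+\lambda}}\,e^{i\left(|x|^{-s}-2\pi x\cdot\xi\right)}\,dx .
\]
The phase $\phi_\xi(x)=|x|^{-s}-2\pi x\cdot\xi$ has $\nabla\phi_\xi(x)=-s|x|^{-s-2}x-2\pi\xi$, of size $\sim 2^{j(s+1)}$ on the support of $\psi_j$ unless $|\xi|\sim 2^{j(s+1)}$; in the former regime repeated integration by parts against $\nabla\phi_\xi/|\nabla\phi_\xi|^2$ gives decay faster than any power of $\max\{|\xi|2^{-j(s+1)},\,2^{j(s+1)}|\xi|^{-1}\}$. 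When $|\xi|\sim 2^{j(s+1)}$ there is a single nondegenerate critical point $x_c$ with $|x_c|\sim 2^{-j}$ whose Hessian satisfies $|\det\mathrm{Hess}\,\phi_\xi(x_c)|\sim 2^{jn(s+2)}$, while the effective support $\lesssim 2^{-j(s+2)/2}$ of the stationary phase is smaller than the scale $2^{-j}$ on which the amplitude $\psi_j(x)|x|^{-n-\lambda}\sim 2^{j(n+\lambda)}$ varies; the method of stationary phase then yields
\[
|\widehat{K_j}(\xi)|\lesssim 2^{j(n+\lambda)}\cdot 2^{-jn(s+2)/2}=2^{j\left(\lambda-ns/2\right)} .
\]
Adding the rapidly decaying off-critical contributions and using $\lambda<ns/2$, this gives $\|T_j\|_{L^2\to L^2}\lesssim 2^{-j\alpha}$ with $\alpha=ns/2-\lambda>0$. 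On the other hand the trivial bound $\|T_j\|_{L^1\to L^1}\le\|K_j\|_{L^1}\lesssim\int_{2^{-j-1}}^{2^{-j}}r^{-1-\lambda}\,dr\lesssim 2^{j\lambda}$ holds.

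For $1<p\le 2$, Riesz--Thorin interpolation between the $L^1$ and $L^2$ bounds gives, with $\tfrac1p=1-\tfrac\theta2$ (so $1-\theta=\tfrac2p-1$, $\theta=\tfrac2{p'}$),
\[
\|T_j\|_{L^p\to L^p}\lesssim 2^{j\lambda(1-\theta)}\,2^{-j\alpha\theta}=2^{j\left(\lambda-ns/p'\right)} .
\]
The hypothesis $\big|\tfrac1p-\tfrac12\big|<\tfrac12-\tfrac{\lambda}{ns}$ is, for $p\le 2$, exactly $\lambda<ns/p'$, so $\sum_{j\ge 0}\|T_j\|_{L^p\to L^p}$ converges and $T^{s,\lambda}$ is bounded on $L^p(\mathbb R^n)$. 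For $2\le p<\infty$ one dualizes: $(T^{s,\lambda})^*$ is convolution with $\overline{K(-x)}=\dfrac{e^{-i|x|^{-s}}}{|x|^{n+\lambda}}\chi_{\{|x|<1\}}$, a kernel of exactly the same type, so the case already established applied with exponent $p'\in(1,2]$ and the symmetric condition $\lambda<ns/p$ finishes the proof. The main obstacle is the $L^2$ step: one must carry out the stationary phase analysis of $\widehat{K_j}$ \emph{uniformly in $\xi$}, carefully separating the three regimes $|\xi|\ll 2^{j(s+1)}$, $|\xi|\gg 2^{j(s+1)}$, and $|\xi|\sim 2^{j(s+1)}$, verifying the nondegeneracy and the size of the Hessian in the critical regime, and controlling the derivatives of the amplitude at the scale $2^{-j}$; this is precisely where $\lambda<ns/2$ — equivalently, the nonemptiness of the admissible range of $p$ — is used.
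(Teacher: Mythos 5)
Your proposal is correct, but note that the paper itself offers no proof of Theorem \ref{Theo-strong}: it is quoted as a known result from \cite{Fefferman1970}, \cite{Hirschman1959} and \cite{Wainger1965}, so there is no internal argument to compare against. What you wrote is essentially the classical Hirschman--Wainger--Fefferman scheme, and the bookkeeping checks out: the stationary-phase bound $\|\widehat{K_j}\|_{L^\infty}\lesssim 2^{j(\lambda-ns/2)}$ (amplitude $2^{j(n+\lambda)}$ against $|\det\mathrm{Hess}|^{-1/2}\sim 2^{-jn(s+2)/2}$), the trivial bound $\|K_j\|_{L^1}\lesssim 2^{j\lambda}$, Riesz--Thorin with $\theta=2/p'$ giving $\|T_j\|_{L^p\to L^p}\lesssim 2^{j(\lambda-ns/p')}$, and the observation that for $1<p\le 2$ the hypothesis $|\tfrac1p-\tfrac12|<\tfrac12-\tfrac{\lambda}{ns}$ is exactly $\lambda<ns/p'$, with duality handling $p\ge 2$ since the adjoint kernel is of the same form. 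The only caveat worth recording is that the cited works prove a sharper statement (boundedness up to and including the endpoint $|\tfrac1p-\tfrac12|=\tfrac12-\tfrac{\lambda}{ns}$), which cannot be reached by interpolating the crude $L^1$ bound against $L^2$; that requires an $H^1$--$L^2$ (or analytic/complex interpolation) argument in the spirit of Fefferman--Stein. Since the theorem as stated here only asserts the open range, your simpler interpolation route is entirely adequate, provided the uniform non-stationary and stationary phase estimates you flag as the main obstacle are carried out in the three regimes you list, and the identification of $\sum_j T_j$ with the principal-value operator on a dense class is made explicit.
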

\begin{definition} 
Let $0 < \zeta, s,\lambda < \infty$, and $k$ be an integer with $k\geq 2$. The strongly singular integral operator $T_{\zeta,s,\lambda}$ is defined	by
\begin{align*}
T_{\zeta,s, \lambda}(f)(x)=p.v.\int_{\mathbb{R}}\frac{e^{i\{\zeta.(x-y)^k+|x-y|^{-s}\}}}{(x-y)|x-y|^{\lambda}}f(y)dy.
\end{align*}
\end{definition}
%%%%%%%%%%%%%%%%%%%%%%%%%%%%%%%%%%%%
\begin{theorem}\label{Theo-strong-CY2010}
{\rm (see in \cite{CY2010})} Let $0 < \zeta, s, \lambda < \infty$, $k\in\mathbb N$ with $k\geq 2$ and $s\geq 2\lambda$. Then $T_{\zeta,s,\lambda}$ extends to a bounded operator from $L^2(\mathbb R)$ to itself.
\end{theorem}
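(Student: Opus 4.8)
The plan is to realise $T_{\zeta,s,\lambda}$ as a convolution operator and invoke Plancherel's theorem. Since the kernel $\dfrac{e^{i\{\zeta(x-y)^{k}+|x-y|^{-s}\}}}{(x-y)|x-y|^{\lambda}}$ depends only on $x-y$, we may write $T_{\zeta,s,\lambda}f=K*f$ with
\[
K(x)=\frac{e^{i(\zeta x^{k}+|x|^{-s})}}{x|x|^{\lambda}},
\]
the integral near $x=0$ being taken in the principal value sense. Because $\|K*f\|_{L^{2}(\mathbb R)}\le\|\widehat K\|_{L^{\infty}(\mathbb R)}\,\|f\|_{L^{2}(\mathbb R)}$, it suffices (working first on a dense class and then extending) to prove the uniform bound $\sup_{\xi\in\mathbb R}|\widehat K(\xi)|<\infty$. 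Splitting $K=K\chi_{\{|x|>1\}}+K\chi_{\{|x|\le1\}}$, the first piece lies in $L^{1}(\mathbb R)$ since $|K(x)|=|x|^{-1-\lambda}$ and $\lambda>0$, so its Fourier transform is bounded, and it remains to control $\widehat{K\chi_{\{|x|\le1\}}}$.

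Separating the contributions of $x>0$ and $x<0$ and changing variables reduces this to bounding, uniformly in $\eta\in\mathbb R$, the oscillatory integral
\[
J(\eta)=\lim_{\varepsilon\to0^{+}}\int_{\varepsilon}^{1}\frac{e^{i\phi(u)}}{u^{1+\lambda}}\,du,\qquad \phi(u)=\pm\zeta u^{k}+u^{-s}+u\eta,
\]
the sign in front of $\zeta u^{k}$ being irrelevant for the estimates. I would decompose $(0,1)$ into three regions determined by the size of $u$ relative to $|\eta|$ and to constants depending only on $\zeta,s,k$. In the region where $|\phi'(u)|\gtrsim u^{-s-1}$ — which, since $\phi'(u)=\pm k\zeta u^{k-1}-su^{-s-1}+\eta$ and $k\ge2$, covers all $u$ small compared with $(|\eta|+1)^{-1/(s+1)}$ — I integrate by parts once: the resulting amplitude has derivative $O(u^{s-\lambda-1})$, which is integrable near $0$ because $s>\lambda$ (a consequence of $s\ge2\lambda$ and $\lambda>0$), and the boundary terms are $O(u^{s-\lambda})$, hence bounded. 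In the region where $u$ is bounded below by a fixed positive constant, the amplitude $u^{-1-\lambda}$ and the interval length are both $O(1)$, so that part is bounded trivially and uniformly in $\eta$.

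The essential region is a neighbourhood of the critical point $u_{c}$ of $\phi$ (when one exists in $(0,1)$); such a point satisfies $su_{c}^{-s-1}\approx|\eta|+O(1)$, so either it falls in the region just handled or $u_{c}$ is small, and in any case $u_{c}\le1$. There $\phi''(u)=\pm k(k-1)\zeta u^{k-2}+s(s+1)u^{-s-2}$, and for $u$ small the definite-sign term $s(s+1)u^{-s-2}$ dominates the polynomial contribution, so $|\phi''|\gtrsim u^{-s-2}$ near $u_{c}$. Van der Corput's second-derivative estimate with amplitude, applied on an interval of length $\sim u_{c}$ about $u_{c}$, then gives a contribution
\[
\lesssim |\phi''(u_{c})|^{-1/2}\,u_{c}^{-1-\lambda}\lesssim u_{c}^{(s+2)/2-1-\lambda}=u_{c}^{\,s/2-\lambda},
\]
which is $\le1$ uniformly precisely because $u_{c}\le1$ and $s\ge2\lambda$; away from $u_{c}$ the phase is non-stationary and one integrates by parts once more, the gain from $1/\phi'$ again beating the growth of $u^{-1-\lambda}$. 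Adding the three regions yields $|J(\eta)|\le C$ independent of $\eta$, hence $\|\widehat K\|_{L^{\infty}}<\infty$ and the theorem.

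The delicate point, and the reason the bound is sharp, is the endpoint case $s=2\lambda$: a naive dyadic decomposition of the stationary region loses a factor $\log|\eta|$, so one must use the stationary-phase estimate with amplitude in its sharp form and organise the decomposition relative to $u_{c}$ rather than dyadically. One also has to check throughout that the polynomial phase $\zeta u^{k}$ with $k\ge2$ contributes only a genuinely lower-order perturbation near $u=0$ (its first derivative is $O(u^{k-1})$ and its second derivative is $O(u^{k-2})$, dominated by $u^{-s-1}$ and $u^{-s-2}$ respectively), which is exactly where the hypothesis $k\ge2$ enters; away from $u=0$ the polynomial may add further critical points, but there the amplitude is bounded and the contribution is harmless.
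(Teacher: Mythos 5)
This theorem is not proved in the paper at all: it is imported verbatim from Cho and Yang \cite{CY2010}, so there is no internal argument to compare against, and your proposal should be judged as a self-contained proof. As such it is essentially correct, and it follows the classical Hirschman--Wainger--Fefferman multiplier route that underlies the cited result: reduce the convolution operator to the uniform boundedness of $\widehat K$, discard the integrable part $|x|>1$, and estimate the one-dimensional oscillatory integral $\int_\varepsilon^1 e^{i\phi(u)}u^{-1-\lambda}\,du$ with $\phi(u)=\pm\zeta u^k+u^{-s}+u\eta$ by non-stationary phase where $|\phi'(u)|\gtrsim u^{-s-1}$ (which indeed covers $u\lesssim(1+|\eta|)^{-1/(s+1)}$, and also the whole range when $\eta\le 0$), by the trivial bound for $u$ bounded below, and by the second-derivative van der Corput lemma with amplitude near the unique critical point, where $|\phi''|\gtrsim u^{-s-2}$ yields the contribution $u_c^{s/2-\lambda}\le 1$; the exponent bookkeeping is exactly where $s\ge 2\lambda$ is used, and the polynomial phase is a harmless perturbation for small $u$. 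Two places deserve a more careful write-up than your sketch gives: (i) the non-stationary estimate away from $u_c$ inside the intermediate region needs an explicit lower bound, e.g.\ writing $\phi'(u)=s\bigl(u_c^{-s-1}-u^{-s-1}\bigr)\pm k\zeta\bigl(u^{k-1}-u_c^{k-1}\bigr)$ and using monotonicity of $\phi'$ on the small-$u$ range to get $|\phi'(u)|\gtrsim u^{-s-1}$ for $u\le u_c/2$ and $|\phi'(u)|\gtrsim u_c^{-s-1}$ for $u\ge 2u_c$ (and to guarantee at most one critical point there, also when $k$ is odd and the sign is negative); (ii) since $K$ is neither in $L^1$ nor in $L^2$, the Plancherel step should be run on the truncated kernels $K\chi_{\{\varepsilon<|x|\}}$, for which your integration-by-parts bounds are uniform in $\varepsilon$ (the boundary term at $u=\varepsilon$ is $O(\varepsilon^{s-\lambda})$), followed by a limiting argument on Schwartz functions matching the principal-value definition. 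A minor remark: your argument never really needs $k\ge 2$ (for $k=1$ the polynomial term is absorbed into the modulation $u\eta$), so that hypothesis is inherited from the statement rather than load-bearing in your proof. With these points tightened, the argument is a complete and correct proof of the quoted $L^2$ bound.
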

On the other hand,  Li and Lu \cite{LL2006} also studied the Coifman-Rochberg-Weiss type commutator of strongly singular integral operator defined as follows
\begin{definition}
Let $0 < s < \infty$ and $0 < \lambda < \frac{ns}{2}$. The Coifman-Rochberg-Weiss type commutator of strongly singular integral operator is defined by
\begin{equation}\label{commuatator-strong}
[b,T^{s,\lambda}](f)(x)=p.v.\int_{\mathbb{R}^n}\frac{e^{i|x-y|^{-s}}}{|x-y|^{n+\lambda}}\chi_{\{|x-y|<1 \}}\big(b(x)-b(y)\big)f(y)dy,
\end{equation}
where $b$ is locally integrable functions on $\mathbb R^n$.
\end{definition}
Moreover, Li and Lu \cite{LL2006} proved the following interesting result.
\begin{theorem}\label{Theo1.1-LL2006}
{\rm (Theorem 1.1 \cite{LL2006})} Let $0 < s < \infty$, $1<p<\infty$, $0<\lambda<\frac{ns}{2}$, $|\frac{1}{p}-\frac{1}{2}|<\frac{1}{2}-\frac{\lambda}{ns}$  and $b\in BMO(\mathbb R^n)$. Then the commutator $[b, T^{s,\lambda}]$ extends to  a bounded operator on $L^p(\mathbb R^n)$.
\end{theorem}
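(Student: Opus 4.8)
The plan is to follow the Coifman--Rochberg--Weiss scheme adapted to strongly singular kernels (as in Li--Lu \cite{LL2006} and Fefferman \cite{Fefferman1970}): control $[b,T^{s,\lambda}]f$ pointwise by a sharp maximal function and then invoke the Fefferman--Stein inequality $\|g\|_{L^p(\mathbb R^n)}\lesssim \|M^\#_\delta g\|_{L^p(\mathbb R^n)}$. To legitimately apply the latter one first reduces, by truncating the kernel and a density argument of the type already used in (\ref{Nakai}), to the case where $[b,T^{s,\lambda}]f$ is known to lie in $L^p(\mathbb R^n)$; this is guaranteed by Theorem \ref{Theo-strong} when $b$ is bounded, and the general $BMO$ case is recovered at the end by the standard limiting argument. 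Throughout, for $r>1$ write $M_r g:=\big(M(|g|^r)\big)^{1/r}$.

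First I would record the structural facts about the kernel $K^{s,\lambda}(x)=e^{i|x|^{-s}}|x|^{-n-\lambda}\chi_{\{|x|<1\}}$. Besides the $L^p$-boundedness of $T^{s,\lambda}$ itself (Theorem \ref{Theo-strong}), the essential point is that $K^{s,\lambda}$ does not satisfy the usual H\"ormander regularity condition but only a weaker one carrying a scaling gain: there is $\theta=\theta(n,s)\in(0,1)$ such that $\int_{|x|\ge 2|y|^{\theta}}|K^{s,\lambda}(x-y)-K^{s,\lambda}(x)|\,dx\le C$ for all small $|y|$, together with the truncated size bound $\int_{|y|^{\theta}\le|x|<1}|K^{s,\lambda}(x)|\,dx\lesssim 1$. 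This is obtained by a Littlewood--Paley decomposition of the kernel into pieces concentrated at scales $|x|\sim 2^{-\ell}$ and a Plancherel estimate exploiting the oscillation $e^{i|x|^{-s}}$ beating the size $|x|^{-n-\lambda}$.

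Second, fix a cube $Q\ni x$, insert $b_Q=\frac{1}{|Q|}\int_Q b$, and use
\[
[b,T^{s,\lambda}]f(x)=(b(x)-b_Q)\,T^{s,\lambda}f(x)-T^{s,\lambda}\big((b-b_Q)f\big)(x),\qquad x\in Q.
\]
Decomposing $f=f\chi_{\widetilde Q}+f\chi_{(\widetilde Q)^c}$ with $\widetilde Q$ a fixed dilate of $Q$, the first term contributes, by John--Nirenberg (Lemma \ref{BMO-Lemma}) and an $L^r$-average, a piece $\lesssim\|b\|_{BMO(\mathbb R^n)}\,M_r\big(T^{s,\lambda}f\big)(x)$; the local part $T^{s,\lambda}\big((b-b_Q)f\chi_{\widetilde Q}\big)$ is handled by the $L^p$-boundedness of $T^{s,\lambda}$ and the generalized H\"older inequality, yielding $\lesssim\|b\|_{BMO(\mathbb R^n)}\,M_r f(x)$ for $1<r<p$. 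The delicate piece is the tail $T^{s,\lambda}\big((b-b_Q)f\chi_{(\widetilde Q)^c}\big)$: here I would split the $y$-integration into the far region $|x-y|\gtrsim\ell(Q)^{\theta}$, controlled by the weak H\"ormander condition above, and the intermediate region $\ell(Q)\le|x-y|\lesssim\ell(Q)^{\theta}$, controlled by the truncated size bound; in both, the logarithmically growing averages $|b_{2^{j}Q}-b_Q|$ are summed by Proposition \ref{Pro-T1986}, leaving only $M_r f(x)$ and $\|b\|_{BMO(\mathbb R^n)}$ factors.

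The main obstacle is exactly this tail estimate: since $K^{s,\lambda}$ is more singular than a Calder\'on--Zygmund kernel, one cannot work at the scale $\ell(Q)$ and is forced to the comparison scale $\ell(Q)^{\theta}$, where the strongly singular structure of $T^{s,\lambda}$ (the oscillation overcoming the blow-up of $|x|^{-n-\lambda}$) must genuinely be used. Once this is in place one arrives, for some $0<\delta<1$ and $1<r<p$, at
\[
M^\#_\delta\big([b,T^{s,\lambda}]f\big)(x)\lesssim\|b\|_{BMO(\mathbb R^n)}\Big(M_r\big(T^{s,\lambda}f\big)(x)+M_r f(x)\Big),
\]
and the theorem follows from the Fefferman--Stein inequality together with the $L^p$-boundedness of $M_r$ (as $r<p$) and of $T^{s,\lambda}$ (Theorem \ref{Theo-strong}), finishing the proof.
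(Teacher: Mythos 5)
First, note that the paper does not prove this statement at all: it is quoted verbatim as Theorem 1.1 of Li--Lu \cite{LL2006}, so there is no internal proof to compare with and your argument has to be measured against the known proof in that reference (which, like the sketch of Lin \cite{L2007} and the Alvarez--Milman framework \cite{AM1986}, runs through a sharp maximal function estimate for the commutator). Your overall scheme -- write $[b,T^{s,\lambda}]f=(b-b_Q)T^{s,\lambda}f-T^{s,\lambda}((b-b_Q)f)$ on a cube $Q$, estimate $M^\#_\delta$, and conclude by Fefferman--Stein plus Theorem \ref{Theo-strong}, with a truncation/limiting argument to justify the a priori finiteness -- is indeed the right family of arguments.

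There is, however, a genuine gap precisely at the step you yourself identify as the crux. Your claimed ``truncated size bound'' $\int_{|y|^{\theta}\le |x|<1}|K^{s,\lambda}(x)|\,dx\lesssim 1$ is false: since $|K^{s,\lambda}(x)|=|x|^{-n-\lambda}$ on $|x|<1$, this integral is comparable to $|y|^{-\theta\lambda}$ and blows up as $|y|\to 0$; no estimate taking absolute values of the kernel can survive, because it discards exactly the oscillation $e^{i|x|^{-s}}$ that makes $T^{s,\lambda}$ bounded. Consequently the intermediate region $\ell(Q)\lesssim|x-y|\lesssim\ell(Q)^{\theta}$ in your tail estimate is not actually controlled: summing the annular size bounds there produces a factor of order $\ell(Q)^{-\lambda}$, which is unbounded for small cubes, so the asserted pointwise bound $M^\#_\delta([b,T^{s,\lambda}]f)\lesssim\|b\|_{BMO}(M_r(T^{s,\lambda}f)+M_rf)$ does not follow from what you have written. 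In the actual proof (Li--Lu, Lin, after Alvarez--Milman) this region is not treated as a ``tail'' at all: one takes the local cube at the enlarged scale $\ell(Q)^{\theta}$ and uses a smoothing property of the strongly singular operator (an $(L^q,L^2)$-type bound coming from Fefferman's multiplier estimates, i.e.\ the same mechanism behind Theorem \ref{Theo-strong}) to absorb the mismatch between $|Q|$ and $|Q|^{\theta n/\ell(Q)\text{-scaling}}$; only the region $|x-y|\gtrsim\ell(Q)^{\theta}$ is handled by the weak H\"ormander condition (which, by the way, forces $\theta\le(1+s+\lambda)^{-1}$, so $\theta$ depends on $\lambda$ and $s$, not only on $n,s$, and follows from the elementary gradient bound $|\nabla K^{s,\lambda}(x)|\lesssim|x|^{-n-\lambda-s-1}$ rather than any Littlewood--Paley argument). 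Without supplying this smoothing lemma and redoing the intermediate-scale estimate with it, the proposal does not constitute a proof of the theorem.
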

From Theorem \ref{Theo-sublinear1}, Theorem \ref{Theo-strong}, Theorem \ref{Theo-strong-CY2010} and Theorem \ref{Theo1.1-LL2006}, we obtain the useful results as follows.
%%%%%%%%%%%%%%%%%%%%%%%%%%%%%%%%%%%%%%
\begin{corollary}\label{Theo-strong1}
Let $0<s<\infty$, $1<p<\infty$, $0<\lambda<\frac{ns}{2}$, $0<\kappa<1$, $|\frac{1}{p}-\frac{1}{2}|<\frac{1}{2}-\frac{\lambda}{ns}$, $-n+\frac{\lambda p}{\kappa}<
\beta< \frac{\lambda p +(1-\kappa)n}{\kappa}$, $\omega(x)=|x|^{\beta}$ and $\kappa_1\in (0,\kappa-\frac{\lambda p}{n+\beta}]$. Let $b\in L^{\eta}_{\rm loc}(\mathbb R^n)\cap BMO(\mathbb R^n)$ with $\eta>p'$.
Then $T^{s,\lambda}$ and $[b, T^{s,\lambda}]$ extend to  bounded operators from $\mathfrak {\mathop B\limits^.}^{p,\kappa}(\omega,\mathbb R^n)$ to $\mathcal {\mathop B\limits^.}^{p,\kappa_1}_{\rm loc}(\omega,\mathbb R^n)$.
\end{corollary}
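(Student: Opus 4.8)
The plan is to obtain Corollary \ref{Theo-strong1} as a direct application of Theorem \ref{Theo-sublinear1}, so the whole task reduces to checking that $\mathcal T=T^{s,\lambda}$ (and, for the commutator part, $[b,T^{s,\lambda}]$) fits the framework of that theorem: namely, that it is a strongly singular sublinear operator, i.e. obeys the pointwise size bound (\ref{ineq-sub}) (resp. (\ref{ineq-sub-com})) with exponent $\lambda$, and that it is bounded on $L^p(\mathbb R^n)$.

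First I would verify the size bound. Fix $f\in C^\infty_c(\mathbb R^n)$ and a point $x\notin\operatorname{supp} f$. Since $|x-y|$ is bounded below on $\operatorname{supp} f$, the kernel $e^{i|x-y|^{-s}}|x-y|^{-n-\lambda}\chi_{\{|x-y|<1\}}$ is bounded there, so the principal value defining $T^{s,\lambda}f(x)$ is an ordinary absolutely convergent integral; using $|e^{i|x-y|^{-s}}|=1$ and $\chi_{\{|x-y|<1\}}\le 1$ and moving the modulus inside gives
\[
|T^{s,\lambda}(f)(x)|\le \int_{\mathbb R^n}\frac{|f(y)|}{|x-y|^{n+\lambda}}\,dy,
\]
which is (\ref{ineq-sub}). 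The identical computation applied to (\ref{commuatator-strong}) yields $|[b,T^{s,\lambda}](f)(x)|\le \int_{\mathbb R^n}|f(y)|\,|b(x)-b(y)|\,|x-y|^{-n-\lambda}\,dy$ for $x\notin\operatorname{supp} f$, i.e. (\ref{ineq-sub-com}).

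Next I would invoke the $L^p$ theory: the hypotheses $0<\lambda<\frac{ns}{2}$, $1<p<\infty$ and $|\frac1p-\frac12|<\frac12-\frac{\lambda}{ns}$ of the corollary are exactly those of Theorem \ref{Theo-strong}, giving that $T^{s,\lambda}$ extends to a bounded operator on $L^p(\mathbb R^n)$; the same parameter range together with $b\in BMO(\mathbb R^n)$ are the hypotheses of Theorem \ref{Theo1.1-LL2006}, so $[b,T^{s,\lambda}]$ likewise extends boundedly to $L^p(\mathbb R^n)$. Finally, the remaining parameters of the corollary match those of Theorem \ref{Theo-sublinear1}: $0<\kappa<1$, $\omega(x)=|x|^\beta$ with $-n+\frac{\lambda p}{\kappa}<\beta<\frac{\lambda p+(1-\kappa)n}{\kappa}$ (hence $\beta>-n$, so (\ref{ineq-power}) is in force), and $\kappa_1\in(0,\kappa-\frac{\lambda p}{n+\beta}]$, a nonempty interval precisely because $\beta>-n+\frac{\lambda p}{\kappa}$ forces $\kappa-\frac{\lambda p}{n+\beta}>0$. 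Thus part (i) of Theorem \ref{Theo-sublinear1} applies to $T^{s,\lambda}$, and part (ii) applies to $[b,T^{s,\lambda}]$ using $b\in L^\eta_{\rm loc}(\mathbb R^n)\cap BMO(\mathbb R^n)$ with $\eta>p'$; both therefore extend boundedly from $\mathfrak{\mathop B\limits^.}^{p,\kappa}(\omega,\mathbb R^n)$---in which $L^p(\mathbb R^n)\cap\mathcal{\mathop B\limits^.}^{p,\kappa}(\omega,\mathbb R^n)$ is dense by the very definition of that space---to $\mathcal{\mathop B\limits^.}^{p,\kappa_1}_{\rm loc}(\omega,\mathbb R^n)$, which is the assertion of the corollary.

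The only genuinely delicate point is the justification, in the size-estimate step, that off the support of $f$ the principal-value integral coincides with an absolutely convergent integral so that the modulus may be passed inside; here it is the separation between $x$ and $\operatorname{supp} f$ that matters, the cutoff $\chi_{\{|x-y|<1\}}$ being irrelevant for this purpose. Everything else is bookkeeping---matching the parameter ranges of Theorems \ref{Theo-strong} and \ref{Theo1.1-LL2006} against those of Theorem \ref{Theo-sublinear1}.
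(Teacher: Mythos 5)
Your proposal is correct and follows exactly the paper's route: the paper deduces the corollary directly from Theorem \ref{Theo-sublinear1} combined with Theorem \ref{Theo-strong} and Theorem \ref{Theo1.1-LL2006}, and your verification that $T^{s,\lambda}$ and $[b,T^{s,\lambda}]$ satisfy the size conditions (\ref{ineq-sub}) and (\ref{ineq-sub-com}) with exponent $\lambda$ just makes explicit what the paper leaves implicit.
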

%%%%%%%%%%%%%%%%%%%%%%%%%%%%%%%%%%%
\begin{corollary}\label{Coro-CY-new}
Let $0<\zeta,\lambda,s<\infty$, $k\in\mathbb N$ with $k\geq 2$, $s\geq 2\lambda$, $0<\kappa<1$, $-1+\frac{2\lambda}{\kappa}<
\beta< \frac{2\lambda +(1-\kappa)}{\kappa}$, $\omega(x)=|x|^{\beta}$ and $\kappa_1\in (0,\kappa-\frac{2\lambda}{1+\beta}]$. Then $T_{\zeta, s,\lambda}$ extends to a bounded operator from $\mathfrak {\mathop B\limits^.}^{2,\kappa}(\omega,\mathbb R)$ to $\mathcal {\mathop B\limits^.}^{2,\kappa_1}_{\rm loc}(\omega,\mathbb R)$.
\end{corollary}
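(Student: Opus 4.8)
The plan is to obtain Corollary \ref{Coro-CY-new} as a direct application of part (i) of Theorem \ref{Theo-sublinear1} with $n=1$ and $p=2$, using Theorem \ref{Theo-strong-CY2010} to supply the required $L^2$-boundedness. This parallels the way Corollary \ref{Theo-strong1} is deduced from Theorem \ref{Theo-sublinear1} together with Theorems \ref{Theo-strong} and \ref{Theo1.1-LL2006}.

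First I would check that $T_{\zeta,s,\lambda}$ fits the framework of strongly singular sublinear operators on $\mathbb R$. Its convolution kernel is $K(x-y)=e^{i\{\zeta(x-y)^{k}+|x-y|^{-s}\}}\big/\big((x-y)|x-y|^{\lambda}\big)$, whose modulus equals $|x-y|^{-1-\lambda}$. Since $f\in C^{\infty}_c(\mathbb R)$ and $x\notin\mathrm{supp}(f)$ forces $|x-y|$ to be bounded below on the (compact) support of $f$, the principal value reduces to an ordinary integral and
\begin{align*}
|T_{\zeta,s,\lambda}f(x)|\le \int_{\mathbb R}\frac{|f(y)|}{|x-y|^{1+\lambda}}\,dy,\qquad x\notin\mathrm{supp}(f).
\end{align*}
Because $n+\lambda=1+\lambda$ when $n=1$, this is precisely the size condition (\ref{ineq-sub}) with the same exponent $\lambda$; that $T_{\zeta,s,\lambda}$ is well defined on $C^{\infty}_c(\mathbb R)$ is part of the strongly singular integral theory of \cite{CY2010}. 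Next, under the hypotheses $0<\zeta,s,\lambda<\infty$, $k\in\mathbb N$ with $k\ge 2$, and $s\ge 2\lambda$ --- all assumed in the corollary --- Theorem \ref{Theo-strong-CY2010} gives that $T_{\zeta,s,\lambda}$ extends to a bounded operator on $L^2(\mathbb R)$, which is exactly the boundedness hypothesis of Theorem \ref{Theo-sublinear1}(i) for $p=2$.

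Finally I would match the parameters: for $n=1$ and $p=2$ the admissible range $-n+\frac{\lambda p}{\kappa}<\beta<\frac{\lambda p+(1-\kappa)n}{\kappa}$ becomes $-1+\frac{2\lambda}{\kappa}<\beta<\frac{2\lambda+(1-\kappa)}{\kappa}$ and the interval $\kappa_1\in(0,\kappa-\frac{\lambda p}{n+\beta}]$ becomes $\kappa_1\in(0,\kappa-\frac{2\lambda}{1+\beta}]$, which are exactly the conditions in the statement, while $\lambda>0$ and $0<\kappa<1$ also hold. Theorem \ref{Theo-sublinear1}(i) then yields boundedness of $T_{\zeta,s,\lambda}$ from $\mathfrak{\mathop B\limits^.}^{2,\kappa}(\omega,\mathbb R)$ to $\mathcal{\mathop B\limits^.}^{2,\kappa_1}_{\rm loc}(\omega,\mathbb R)$. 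Since this is a straight substitution into an already-proved theorem, there is no genuine obstacle; the only step meriting a line of justification is the pointwise size bound off the support, and that follows at once from $|K(x-y)|=|x-y|^{-1-\lambda}$.
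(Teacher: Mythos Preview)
Your proposal is correct and mirrors exactly the paper's own derivation: the paper states the corollary as an immediate consequence of Theorem~\ref{Theo-sublinear1}(i) (with $n=1$, $p=2$) together with the $L^2(\mathbb R)$-boundedness from Theorem~\ref{Theo-strong-CY2010}, the size condition coming from $|K(x-y)|=|x-y|^{-1-\lambda}$. Your parameter matching and the off-support pointwise bound are precisely the substitutions the paper has in mind.
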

Remark that in the special case when the weight function in Theorem \ref{Theo-sublinear1}, Corollary \ref{Theo-strong1} and Corollary \ref{Coro-CY-new} is a constant function, then we can remove the central condition in the spaces, that is, we may replace 
$\mathfrak {\mathop B\limits^.}^{p,\kappa}(\mathbb R^n)$ and $\mathcal {\mathop B\limits^.}^{p,\kappa_1}_{\rm loc}(\mathbb R^n)$ by $\mathfrak {M}^{p,\kappa}(\mathbb R^n)$ and $\mathcal { B}^{p,\kappa_1}_{\rm loc}(\mathbb R^n)$, respectively. Here ${\mathfrak{M} }^{p,\kappa}(\mathbb R^n)$ is the closure of $L^{p}(\mathbb R^n)\cap {\mathcal B}^{p,\kappa}(\mathbb R^n)$ in the space ${\mathcal B}^{p,\kappa}( \mathbb R^n)$.
%%%%%%%%%%%%%%%%%%%%%%%%%%%%%%%%%%
\vskip 5pt
Finally, we give the boundedness of sublinear operators in the setting when the weighted function is in the class of Muckenhoupt weights. It is worth pointing out that when $\omega_1=\omega_2=\omega$, then the space $C^\infty_0(\mathbb R^n)$ is contained in ${\mathcal B}^{q,\kappa}_\omega(\mathbb R^n)$. The space  ${\mathfrak{B} }^{q,\kappa}_\omega(\mathbb R^n)$ is denoted as the closure of $L^{q}(\mathbb R^n)\cap {\mathcal B}^{q,\kappa}_\omega(\mathbb R^n) $ in the space ${\mathcal B}^{q,\kappa}_\omega(\mathbb R^n)$.
\begin{theorem}\label{Theo-Sublinear2}
Let $1<p<\infty$, $0<\kappa<1$, and $1\leq p^*,\zeta<\infty$, $\omega\in A_{\zeta}$ with the finite critical index $r_\omega$ for the reverse H\"{o}lder. Assume that $p > p^{*}\zeta {r^{'}_\omega}, \delta\in (1,r_\omega)$ and $\kappa^*=\frac{p^*(\kappa-1)}{p}+1$. Then, if $\mathcal{T}$ extends to a bounded operator on $L^p(\mathbb R^n)$, then $\mathcal{T}$ can also extend to a bounded operator from ${\mathfrak{B} }^{p,\kappa}_\omega(\mathbb R^n)$ to ${B}^{p^*,\kappa^*}_\omega(\mathbb R^n)$.

\end{theorem}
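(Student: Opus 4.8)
The plan is to follow the template of the proof of Theorem \ref{Theo-sublinear1}, adapted to the inhomogeneous weighted target. First I would reduce, using that $\mathfrak{B}^{p,\kappa}_\omega(\mathbb R^n)$ is by definition the closure of $L^p(\mathbb R^n)\cap\mathcal B^{p,\kappa}_\omega(\mathbb R^n)$, to establishing the a priori bound $\|\mathcal T f\|_{B^{p^*,\kappa^*}_\omega(\mathbb R^n)}\lesssim\|f\|_{\mathcal B^{p,\kappa}_\omega(\mathbb R^n)}$ for $f\in L^p(\mathbb R^n)\cap\mathcal B^{p,\kappa}_\omega(\mathbb R^n)$; the $L^p(\mathbb R^n)$-boundedness of $\mathcal T$ also guarantees, by the $L^p$-convergence and a.e.-subsequence argument used in (\ref{Nakai}), that $\mathcal T f$ is well defined and still satisfies the size bound (\ref{ineq-sub}) off the support of $f$. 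Then I would fix a ball $B=B_R(x_0)$ with $R\geq 1$ — the restriction $R\geq1$ in the inhomogeneous space will be essential — split $f=f_1+f_2$ with $f_1=f\chi_{2B}$, and write
\begin{align*}
\frac{1}{\omega(B)^{\kappa^*}}\int_B|\mathcal T f(x)|^{p^*}\omega(x)\,dx\lesssim\frac{1}{\omega(B)^{\kappa^*}}\int_B|\mathcal T f_1(x)|^{p^*}\omega(x)\,dx+\frac{1}{\omega(B)^{\kappa^*}}\int_B|\mathcal T f_2(x)|^{p^*}\omega(x)\,dx=:I_1+I_2.
\end{align*}

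For the local term $I_1$ I would apply Hölder's inequality with exponent $p/p^*$ to separate $|\mathcal T f_1|^{p^*}$ from $\omega$, use the $L^p(\mathbb R^n)$-boundedness of $\mathcal T$ to get $\|\mathcal T f_1\|_{L^p}\lesssim\|f_1\|_{L^p}=\|f\|_{L^p(2B)}$, and then the reverse Hölder inequality for $\omega$ — legitimate because $p>p^*\zeta r'_\omega\geq p^*r'_\omega$ forces the conjugate exponent $(p/p^*)'=\frac{p}{p-p^*}$ to be $<r_\omega$, so one may choose $\delta\in(1,r_\omega)$ with $\delta\geq\frac{p}{p-p^*}$ — to estimate $\big(\int_B\omega^{(p/p^*)'}\big)^{1/(p/p^*)'}\lesssim\omega(B)|B|^{-p^*/p}$. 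It then remains to pass from $\int_{2B}|f|^p$ back to $\|f\|_{\mathcal B^{p,\kappa}_\omega}$, which uses Proposition \ref{pro2.4DFan} (applicable since $p>p^*\zeta r'_\omega$ gives $\omega\in A_{p/p^*}\supset A_\zeta$) together with the doubling of $\omega$; combining everything with the defining identity $\frac{1-\kappa^*}{p^*}=\frac{1-\kappa}{p}$ yields $I_1\lesssim\|f\|^{p^*}_{\mathcal B^{p,\kappa}_\omega}$.

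For the global term $I_2$, for $x\in B$ and $y\in(2B)^c$ one has $2R\leq|x_0-y|\leq2|x-y|$, so (\ref{ineq-sub}) gives $|\mathcal T f_2(x)|\lesssim\sum_{j\geq1}(2^jR)^{-n-\lambda}\int_{2^{j+1}B}|f(y)|\,dy$. On each annulus I would use Hölder's inequality and the $A_p$-condition on $\omega$ (valid since $\zeta\leq p$) to get $\int_{2^{j+1}B}|f|\lesssim\big(\int_{2^{j+1}B}|f|^p\omega\big)^{1/p}\frac{|2^{j+1}B|}{\omega(2^{j+1}B)^{1/p}}\lesssim\|f\|_{\mathcal B^{p,\kappa}_\omega}|2^{j+1}B|\,\omega(2^{j+1}B)^{(\kappa-1)/p}$, and then the lower estimate in Proposition \ref{rever-Holder} with exponent $\delta$, namely $\omega(2^{j+1}B)\gtrsim2^{(j+1)n(\delta-1)/\delta}\omega(B)$, to sum the resulting geometric series (it converges since $\lambda\geq0$ and $(\delta-1)(\kappa-1)<0$), obtaining $|\mathcal T f_2(x)|\lesssim R^{-\lambda}\|f\|_{\mathcal B^{p,\kappa}_\omega}\,\omega(B)^{(\kappa-1)/p}$ for all $x\in B$. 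Inserting this, using $R\geq1$ (hence $R^{-\lambda}\leq1$) and the relation $\kappa^*=\frac{p^*(\kappa-1)}{p}+1$, the powers of $\omega(B)$ cancel and $I_2\lesssim\|f\|^{p^*}_{\mathcal B^{p,\kappa}_\omega}$. Taking the supremum over all $B$ with $R\geq1$ finishes the proof.

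The global term $I_2$ is essentially routine; the main obstacle is the local term $I_1$, where a bound arising purely from the unweighted $L^p$-boundedness of $\mathcal T$ must be converted into a bound in the weighted Morrey norm of $f$. The delicate point is the simultaneous bookkeeping of the three exponent conditions — $p>p^*\zeta r'_\omega$, the choice of $\delta\in(1,r_\omega)$, and $\kappa^*=\frac{p^*(\kappa-1)}{p}+1$ — which together must make the reverse Hölder inequality applicable, place $\omega$ in $A_{p/p^*}$ so that Proposition \ref{pro2.4DFan} applies, and make all the powers of $|B|$ and $\omega(B)$ collapse; arranging the Hölder splittings so that these all close is the crux of the argument.
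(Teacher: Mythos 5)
Your reduction by density, the limiting argument giving the pointwise size bound for $\mathcal T f_2$, and the whole estimate of the global term $I_2$ are fine and essentially coincide with the paper's. The genuine gap is in the local term $I_1$. After your H\"older step with exponent $p/p^*$, the $L^p(\mathbb R^n)$-boundedness of $\mathcal T$ and the reverse H\"older bound, you correctly arrive at $I_1\lesssim\omega(B)^{1-\kappa^*}|B|^{-p^*/p}\big(\int_{2B}|f|^p\,dx\big)^{p^*/p}$, and to conclude you need the $\mathcal T$-free inequality $\int_{2B}|f|^p\,dx\lesssim\omega(B)^{\kappa-1}|B|\,\|f\|^p_{\mathcal B^{p,\kappa}_\omega(\mathbb R^n)}$ (here I used $(\kappa^*-1)p/p^*=\kappa-1$). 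Proposition \ref{pro2.4DFan} cannot supply this: for $\omega\in A_q$ it bounds an \emph{unweighted} average at a strictly \emph{lower} exponent by a weighted average at a higher one, $\frac{1}{|2B|}\int_{2B}|g|\lesssim\big(\frac{1}{\omega(2B)}\int_{2B}|g|^{q}\omega\big)^{1/q}$, so no choice of $g$ (whether $|f|^p$ or $|f|^{p^*}$, with $q=\zeta$ or $q=p/p^*$) produces $\int_{2B}|f|^p\,dx$ on the left and $\int_{2B}|f|^p\omega\,dx$ on the right. Moreover the needed inequality is simply false for general $A_\zeta$ weights with $\zeta>1$: take $\omega(x)=|x|^{a}$ with $0<a<n(\zeta-1)$ (or any $A_\zeta$ weight much smaller on a small sub-ball than its average over the ball), $B=B_1(0)$ and $f=\chi_{B_\varepsilon(0)}$; then the left side is $\simeq\varepsilon^{n}$ while $\omega(B)^{\kappa-1}|B|\,\|f\|^p_{\mathcal B^{p,\kappa}_\omega(\mathbb R^n)}\simeq\varepsilon^{(n+a)(1-\kappa)}$, and the estimate fails as $\varepsilon\to0$ whenever $\kappa<\frac{a}{n+a}$. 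In effect, converting an unweighted $L^p$ integral of $f$ into a weighted $L^p$ integral at the same exponent amounts to an $A_1$-type condition on $\omega$, which is not available here.

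The paper closes this step by exploiting the hypothesis $p>p^*\zeta r'_\omega$ differently: it chooses $r\in(1,r_\omega)$ with $p=\zeta p^*r'$ and applies H\"older on $B$ so as to bound $\big(\int_B|\mathcal T f|^{p^*}\omega\,dx\big)^{1/p^*}$ by $\big(\int_B|\mathcal T f|^{p/\zeta}\,dx\big)^{\zeta/p}\big(\int_B\omega^{r}\,dx\big)^{1/(rp^*)}$, handles the second factor by reverse H\"older, and only then splits $f=f_1+f_2$. The point is that one works with the \emph{unweighted exponent} $p/\zeta$, so that Proposition \ref{pro2.4DFan} applied with exponent $\zeta$ to $|f|^{p/\zeta}$ gives exactly $\big(\int_{2B}|f|^{p/\zeta}\,dx\big)^{\zeta/p}\lesssim\omega(2B)^{-1/p}|2B|^{\zeta/p}\big(\int_{2B}|f|^p\omega\,dx\big)^{1/p}$: the factor $\zeta>1$ of slack between the unweighted and the weighted exponent is precisely what the $A_\zeta$ hypothesis buys and what your splitting throws away. (A side effect of the paper's arrangement is that the local term is then controlled through $\|\mathcal T f_1\|_{L^{p/\zeta}}$, i.e. the paper in fact invokes boundedness of $\mathcal T$ on $L^{p/\zeta}$; your attempt to use only the stated $L^p$-boundedness is more faithful to the hypothesis, but, as shown above, it does not close for $\zeta>1$.)
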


%%%%%%%%%%%%%%%%%%%%%%
\begin{proof}
Let us fix $f\in L^{p}(\mathbb R^n)\cap {\mathcal B}^{p,\kappa}_\omega(\mathbb R^n)$ and a ball $B_R(x_0):=B$ with $R\geq 1$. From assume that $p > p^*\zeta r^{'}_\omega $, one has $r\in(1, r_\omega)$ satisfying
$p = \zeta p^* r'$. Hence, by the H\"{o}lder inequality and the reverse H\"{o}lder condition, we lead to
\begin{align}\label{T_p*}
\Big(\int_{B}|\mathcal{T} (f)(x)|^{p*}\omega(x)dx\Big)^{\frac{1}{p^*}}&\leq \Big(\int_{B}|\mathcal{T} (f)(x)|^{\frac{p}{\zeta}}dx\Big)^{\frac{\zeta}{p}}.\Big(\int_B \omega(x)^r dx\Big)^{\frac{1}{rp*}}\nonumber
\\
&\lesssim \Big(\int_{B}|\mathcal{T}(f)(x)|^{\frac{p}{\zeta}}dx\Big)^{\frac{\zeta}{p}}\omega(B)^{\frac{1}{p*}}.|B|^{\frac{-\zeta}{p}}.
\end{align}
Next, we decompose $f=f_1+f_2$ where $f_1=f.\chi_{2B}$. Thus,
\begin{align}\label{A12}
\Big(\int_{B}|\mathcal{T}(f)(x)|^{\frac{p}{\zeta}}dx\Big)^{\frac{\zeta}{p}}&\lesssim  \Big(\int_{B}|\mathcal{T} (f_1)(x)|^{\frac{p}{\zeta}}dx\Big)^{\frac{\zeta}{p}}+ \Big(\int_{B}|\mathcal{T}(f_2)(x)|^{\frac{p}{\zeta}}dx\Big)^{\frac{\zeta}{p}}\nonumber
\\
&:=A_1+A_2.
\end{align}
From  assuming that $\mathcal{T}$ extends to a bounded operator on $L^p(\mathbb R^n)$ and using Proposition   \ref{pro2.4DFan}, we get
\begin{align}\label{A1}
A_1&\leq \Big(\int_{\mathbb R^n}|\mathcal{T}( f_1)(x)|^{\frac{p}{\zeta}}dx\Big)^{\frac{\zeta}{p}}\lesssim \Big(\int_{2B}|f(x)|^{\frac{p}{\zeta}}dx\Big)^{\frac{\zeta}{p}}.\nonumber
\\
&\lesssim \Big(\int_{2B}|f(x)|^{p}\omega(x)dx\Big)^{\frac{1}{p}}\omega(2B)^{\frac{-1}{p}}|2B|^{\frac{\zeta}{p}}\lesssim \|f\|_{\mathcal B^{p,\kappa}_{\omega}(\mathbb R^n)}.\omega(2B)^{\frac{(\kappa-1)}{p}}.|B|^{\frac{\zeta}{p}}.
\end{align}
Next, let us give $x\in B$ and $y\in (2B)^c$. By applying the relation (\ref{Nakai}) above and estimating as (\ref{Tf2}) and (\ref{A1}), we obtain
\begin{align}
&|\mathcal{T}(f_2)(x)|\nonumber
\\
&\lesssim \sum\limits_{j=1}^{\infty}\frac{1}{|2^jB|^{(1+\frac{\lambda}{n})}}\int_{2^{j+1}B}|f(y)|dy\leq \sum\limits_{j=1}^{\infty}\frac{1}{|2^jB|^{(1+\frac{\lambda}{n})}}\Big(\int_{2^{j+1}B}|f(y)|^{\frac{p}{\zeta}}dy\Big)^{\frac{\zeta}{p}}|2^{j+1}B|^{1-\frac{\zeta}{p}} \nonumber
\\
&\leq \sum\limits_{j=1}^{\infty}\frac{1}{|2^jB|^{(1+\frac{\lambda}{n})}}\|f\|_{\mathcal B^{p,\kappa}_{\omega}(\mathbb R^n)}.\omega(2^{j+1}B)^{\frac{(\kappa-1)}{p}}.|2^{j+1}B|\lesssim \|f\|_{\mathcal B^{p,\kappa}_{\omega}(\mathbb R^n)}\sum\limits_{j=1}^{\infty}\frac{\omega(2^{j+1}B)^{\frac{(\kappa-1)}{p}}}{|2^jB|^{\frac{\lambda}{n}}}.\nonumber
\end{align}
Hence,
\begin{align}\label{A2}
A_2\lesssim \|f\|_{\mathcal B^{p,\kappa}_{\omega}(\mathbb R^n)}\Big(\sum\limits_{j=1}^{\infty}\frac{\omega(2^{j+1}B)^{\frac{(\kappa-1)}{p}}}{|2^jB|^{\frac{\lambda}{n}}}\Big)|B|^{\frac{\zeta}{p}}.
\end{align}
Next, by Proposition \ref{rever-Holder} and $\kappa\in (0,1)$, we deduce
\begin{align}
\Big(\frac{\omega(2^{j+1}B)}{\omega(B)}\Big)^{\frac{(\kappa-1)}{p}}\lesssim \Big(\frac{|2^{j+1}B|}{|B|}\Big)^{\frac{(\kappa-1)(\delta-1)}{p\delta}}\lesssim 2^{\frac{jn(\kappa-1)(\delta-1)}{p\delta}}.\nonumber
\end{align}
From this, by using (\ref{T_p*})-(\ref{A2}), $\kappa^*=\frac{p^*(\kappa-1)}{p}+1$ and $R\geq 1$, we get
\begin{align}
&\Big(\frac{1}{\omega(B)^{\kappa^*}}\int_{B}|\mathcal{T}(f)(x)|^{p*}\omega(x)dx\Big)^{\frac{1}{p^*}}\nonumber
\\
&\lesssim \frac{\|f\|_{\mathcal B^{p,\kappa}_{\omega}(\mathbb R^n)}}{\omega(B)^{\frac{\kappa^*}{p*}}}.\Big(\omega(2B)^{\frac{(\kappa-1)}{p}}+\sum\limits_{j=1}^{\infty}2^{-j\lambda}\omega(2^{j+1}B)^{\frac{(\kappa-1)}{p}}\Big).\omega(B)^{\frac{1}{p*}}\nonumber
\\
&\lesssim \|f\|_{\mathcal B^{p,\kappa}_{\omega}(\mathbb R^n)}.\Big(\sum\limits_{j=0}^{\infty}2^{-j\lambda}\Big(\frac{\omega(2^{j+1}B)}{\omega(B)}\Big)^{\frac{(\kappa-1)}{p}}\Big)
\lesssim \|f\|_{\mathcal B^{p,\kappa}_{\omega}(\mathbb R^n)}.\Big(\sum\limits_{j=0}^{\infty}2^{j(\frac{n(\kappa-1)(\delta-1)}{p\delta}-\lambda)}\Big)\nonumber
\\
&\lesssim \|f\|_{\mathcal B^{p,\kappa}_{\omega}(\mathbb R^n)}.\nonumber
\end{align}
Therefore,
$$
\|\mathcal{T}(f)\|_{ B^{p^*,\kappa^*}_\omega(\mathbb R^n)}\lesssim \|f\|_{\mathcal B^{p,\kappa}_\omega(\mathbb R^n)},\,\textit{\rm for all}\,f\in L^p(\mathbb R^n)\cap {\mathcal B}^{p,\kappa}_\omega(\mathbb R^n).
$$
This implies that theorem is proved.
\end{proof}
%%%%%%%%%%%%%%%%%%%%%%%%%%%%%%%%%%
By Theorem \ref{Theo-Sublinear2}, we obtain the following interesting corollary.
\begin{corollary}\label{Theo-SIO-strong-2}
Let $s,\lambda,\kappa,p$ as Corollary \ref{Theo-strong1} and $1\leq p^*,\zeta<\infty$, $\omega\in A_{\zeta}$ with the finite critical index $r_\omega$ for the reverse H\"{o}lder. Assume that $p > p^{*}\zeta {r^{'}_\omega}, \delta\in (1,r_\omega)$ and $\kappa^*=\frac{p^*(\kappa-1)}{p}+1$. Then, $T^{s,\lambda}$ can extend to a bounded operator from ${\mathfrak{B} }^{p,\kappa}_\omega(\mathbb R^n)$ to ${B}^{p^*,\kappa^*}_\omega(\mathbb R^n)$.
\end{corollary}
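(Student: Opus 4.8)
The plan is to read off this corollary directly from Theorem~\ref{Theo-Sublinear2} applied with $\mathcal{T}=T^{s,\lambda}$; the content of the proof is merely to check that $T^{s,\lambda}$ satisfies the two requirements that Theorem~\ref{Theo-Sublinear2} places on $\mathcal{T}$, namely that it is a strongly singular sublinear operator in the sense of (\ref{ineq-sub}) with exponent $\lambda$, and that it extends to a bounded operator on $L^p(\mathbb R^n)$.

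For the first requirement I would note that when $x\notin\text{supp}\,f$ the kernel $y\mapsto |x-y|^{-(n+\lambda)}e^{i|x-y|^{-s}}\chi_{\{|x-y|<1\}}$ stays bounded on the compact set $\text{supp}\,f$, so the principal value in the definition of $T^{s,\lambda}$ is actually an absolutely convergent integral there; since $\bigl|e^{i|x-y|^{-s}}\bigr|=1$ and $\chi_{\{|x-y|<1\}}\le 1$, this yields
\[
\bigl|T^{s,\lambda}(f)(x)\bigr|\le \int_{\mathbb R^n}\frac{|f(y)|}{|x-y|^{n+\lambda}}\,dy,\qquad x\notin\text{supp}\,f,
\]
for every $f\in C^\infty_c(\mathbb R^n)$, which is exactly (\ref{ineq-sub}) with the non-negative exponent $\lambda$. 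For the second requirement I would invoke Theorem~\ref{Theo-strong}: since $s,\lambda,p$ are taken as in Corollary~\ref{Theo-strong1}, i.e.\ $0<s<\infty$, $1<p<\infty$, $0<\lambda<\frac{ns}{2}$ and $\bigl|\frac{1}{p}-\frac{1}{2}\bigr|<\frac{1}{2}-\frac{\lambda}{ns}$, that theorem gives the boundedness of $T^{s,\lambda}$ on $L^p(\mathbb R^n)$.

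All remaining hypotheses in the statement --- $0<\kappa<1$, $1\le p^*,\zeta<\infty$, $\omega\in A_\zeta$ with finite critical index $r_\omega$, $p>p^*\zeta r^{'}_\omega$, $\delta\in(1,r_\omega)$ and $\kappa^*=\frac{p^*(\kappa-1)}{p}+1$ --- coincide with those of Theorem~\ref{Theo-Sublinear2}. Hence Theorem~\ref{Theo-Sublinear2} applies verbatim with $\mathcal{T}=T^{s,\lambda}$ and produces the boundedness of $T^{s,\lambda}$ from $\mathfrak{B}^{p,\kappa}_\omega(\mathbb R^n)$ to $B^{p^*,\kappa^*}_\omega(\mathbb R^n)$, which is the assertion. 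The proof is thus a straightforward verification; the only point deserving a moment's care is confirming that the principal-value definition of $T^{s,\lambda}$ really produces the pointwise bound (\ref{ineq-sub}) away from the support of $f$ (and that the closure/extension conventions for $\mathfrak{B}^{p,\kappa}_\omega(\mathbb R^n)$ are those used in Theorem~\ref{Theo-Sublinear2}), after which no genuine analytic obstacle remains.
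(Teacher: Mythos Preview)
Your proposal is correct and matches the paper's approach exactly: the paper simply states that the corollary follows ``By Theorem~\ref{Theo-Sublinear2}'' without further detail, and your argument supplies precisely the verification (the size condition~(\ref{ineq-sub}) for $T^{s,\lambda}$ and its $L^p$-boundedness via Theorem~\ref{Theo-strong}) needed to invoke that theorem.
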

%%%%%%%%%%%%%%%%%%%%%%%%%%%%%%%%
%{\textbf{Acknowledgments}}.

\bibliographystyle{amsplain}

\end{document}